% !TEX spellcheck = English

\documentclass{article}   	% use "amsart" instead of "article" for AMSLaTeX format
\usepackage{geometry}               % See geometry.pdf to learn the layout options. There are lots.
\geometry{a4paper}                  % ... or a4paper or a5paper or ...
\usepackage{graphicx,color}						
\usepackage[dvipsnames]{xcolor}
\usepackage{amssymb,amsmath,amsthm,amsfonts}
\usepackage{bm}                     % mathbf greek
\usepackage[english]{babel}
\usepackage[utf8]{inputenc}

\usepackage[pdfencoding=auto,colorlinks]{hyperref}
\hypersetup{linkcolor=blue,citecolor=blue,filecolor=black,urlcolor=blue}

\usepackage[showonlyrefs]{mathtools}
\mathtoolsset{showonlyrefs=true}
%\usepackage{comment}

%%%%%%%%%%%%%%%%%%%%%%
%%%%%%%%%%%%%%%%%%%%%%
%%%%
%%%% some fresh fonts!
%%%%
\usepackage{newtxtext,newtxmath,helvet,courier}
\usepackage{fix-cm}
%%%%%%%%%%%%%%%%%%%%%%
%%%%%%%%%%%%%%%%%%%%%
%%%%%%%%%%%%%%%%%%%%%

%%%%%%%%%%%%%%%%%%%%%
%%%%%%%%%%%%%%%%%%%%%
%%%
%%% graphic packages
%%%%
\usepackage{tikz}
\usepackage{pgfplots}
\usepgflibrary{arrows.meta}
\usetikzlibrary{patterns}
\usepgfplotslibrary{colormaps}
\usetikzlibrary{pgfplots.colormaps}

\pgfplotsset{compat=1.11}

\tikzset{> = {Stealth[inset=0pt]}}
%%%%%%%%%%%%%%%%%%%%%
%%%%%%%%%%%%%%%%%%%%%%

%%%%%%%%%%%%%%%%%%%%%%%%%%%%%%%%%%%%%%%%%%%%
\newtheorem{proposition}{Proposition}[section]
\newtheorem{theorem}[proposition]{Theorem}

\newtheorem{lemma}[proposition]{Lemma}

\theoremstyle{definition}

\theoremstyle{remark}
\newtheorem{remark}[proposition]{Remark}
\numberwithin{equation}{section}
%%%%%%%%%%%%%%%%%%%%%%%%%%%%%%%%%%%%%%%%%%%%
%%%
%%% pls avoid \def
%%%

\newcommand{\eps}{\varepsilon}

\newcommand{\N}{{\mathbb{N}}}

\newcommand{\R}{{\mathbb{R}}}
\renewcommand{\C}{{\mathbb{C}}}

\newcommand{\icomp}{{\mathrm{i}}}

\newcommand{\Bcal}{{\mathcal{B}}}

\newcommand{\Ecal}{{\mathcal{E}}}

\newcommand{\Gcal}{{\mathcal{G}}}

\newcommand{\Mcal}{{\mathcal{M}}}

\newcommand{\Qcal}{{\mathcal{Q}}}

\newcommand{\Ucal}{{\mathcal{U}}}

\newcommand{\lamax}{\dd}
\newcommand{\dd}{\Lambda}

  %for paper content
  %for paper content

\usepackage[normalem]{ulem}
\usepackage{indentfirst}

\DeclareMathOperator{\dist}{dist}

\def\sideremark#1{\ifvmode\leavevmode\fi\vadjust{\vbox to0pt{\vss% the remark
 \hbox to 0pt{\hskip\hsize\hskip1em%                          will appear only
 \vbox{\hsize2.1cm\tiny\raggedright\pretolerance10000%          on the side
  \noindent #1\hfill}\hss}\vbox to15pt{\vfil}\vss}}}%

\title{Normalized solutions for Nonlinear Schr\"odinger systems on bounded domains}
\author{Benedetta Noris, Hugo Tavares, Gianmaria Verzini}
%\date{}							% Activate to display a given date or no date

\begin{document}
\maketitle

\begin{abstract}
We analyze $L^2$-normalized solutions of nonlinear Schr\"odinger systems of Gross-Pitaevskii type, on bounded domains, with homogeneous Dirichlet boundary conditions. We provide sufficient conditions 
for the existence of orbitally stable standing waves. Such waves correspond to global minimizers of the associated energy in the $L^2$-subcritical and critical cases, and to local ones in the $L^2$-supercritical case. 
Notably, our study includes also the Sobolev-critical case.
\end{abstract}

\noindent
{\footnotesize \textbf{AMS-Subject Classification}}. %Primary:\\
35Q55, %NLS-like equations (nonlinear Schr\"odinger)\\
35B33, %Critical exponents\\
35B35, %Stability\\
%35C08 %Soliton solutions\\
35J50. %Variational methods for elliptic systems\\
%Secondary:\\
%35B09 Positive solutions\\
%35J57 Boundary value problems for second-order elliptic systems\\
{\footnotesize }\\
{\footnotesize \textbf{Keywords}} Gross-Pitaevskii systems, constrained critical points, solitary waves, orbital stability, critical exponents.
{\footnotesize }

\section{Introduction}

In this paper, we carry on the study of normalized solutions for Nonlinear Schr\"odinger (NLS) equations and systems, started in \cite{ntvAnPDE,ntvDCDS}.

Let $\Omega\subset\R^N$, $N\geq 1$, be a bounded smooth domain, $\mu_1,\mu_2>0$ and $\beta\in\R$. We consider the following system of coupled Gross-Pitaevskii equations
\begin{equation}\label{eq:system_schro}
\begin{cases}
\icomp\partial_t\Psi_1 + \Delta\Psi_1 + \Psi_1( \mu_1 |\Psi_1|^{p-1} +\beta  |\Psi_1|^{(p-3)/2}|\Psi_2|^{(p+1)/2} )=0\\
\icomp\partial_t\Psi_2 + \Delta\Psi_2 + \Psi_2( \mu_2 |\Psi_2|^{p-1} +\beta  |\Psi_2|^{(p-3)/2}|\Psi_1|^{(p+1)/2} )=0
\end{cases}
\end{equation}
with $\Psi_i:\R^+\times\Omega\to\C$ and, for every $t>0$, $\Psi_i(t,\cdot)\in H^1_0(\Omega;\C)$ ($i=1,2$). Throughout the paper we will distinguish several cases in the range
\[
\begin{cases}
p>1 & N=1,2,\\
1<p\le 2^*-1 & N\ge 3,
\end{cases}
\]
where $2^*=2N/(N-2)$ denotes the Sobolev critical exponent.

NLS systems with power-type nonlinearities appear in several different physical models from quantum mechanics, in particular when $p = 3$ or $p = 5$.
Such models include Bose–Einstein condensation in multiple hyperfine spin states \cite{PhysRevLett.81.5718,MR2090357} and the propagation of mutually
incoherent waves packets in nonlinear optics \cite{agrawal2000}. Moreover, both the cases $\Omega = \R^N$ and
$\Omega$ bounded are of interest \cite{FibichMerle2001,Fukuizumi2012}, the latter one appearing also as a limiting case of the system on $\R^N$ with
(confining) trapping potential.

System \eqref{eq:system_schro} preserves, at least formally, both the masses
\[
\Qcal(\Psi_i)=\int_\Omega |\Psi_i|^2 \qquad i=1,2,
\]
and the energy
\[
\Ecal(\Psi_1,\Psi_2) := \frac12\int_{\Omega}|\nabla \Psi_1|^2+|\nabla \Psi_2|^2 -
\frac{1}{p+1}\int_\Omega \mu_1 |\Psi_1|^{p+1} + 2\beta |\Psi_1|^{(p+1)/2} |\Psi_2|^{(p+1)/2} + \mu_2 |\Psi_2|^{p+1}.
\]
We look for standing wave solutions $(\Psi_1(t,x),\Psi_2(t,x))=(e^{\icomp \omega_1 t}u_1(x),e^{\icomp \omega_2 t}u_2(x))$ of \eqref{eq:system_schro} such that $(u_1,u_2) \in H^1_0(\Omega;\R^2)$ and
\begin{equation}\label{eq:mass_constraint}
\Qcal(u_i)=\rho_i, \quad i=1,2,
\end{equation}
for some $\rho_1,\rho_2\ge0$ prescribed a priori. Then, $(u_1,u_2)$ is a normalized solution of an elliptic system; namely, there exist $(\omega_1,\omega_2)\in \R^2$ such that
\begin{equation}\label{eq:system_elliptic}
\begin{cases}
-\Delta u_1+ \omega_1 u_1=\mu_1 u_1|u_1|^{p-1}+\beta u_1|u_1|^{(p-3)/2} |u_2|^{(p+1)/2}\\
-\Delta u_2+ \omega_2 u_2=\mu_2 u_2|u_2|^{p-1}+\beta u_2 |u_2|^{(p-3)/2} |u_1|^{(p+1)/2}\\
\int_\Omega u_i^2=\rho_i, \quad i=1,2,\\
(u_1,u_2) \in H^1_0(\Omega;\R^2).
\end{cases}
\end{equation}
Solutions of \eqref{eq:system_elliptic} can be seen as critical points of $\Ecal$, constrained to the Hilbert manifold
\begin{equation}\label{eq:defM}
\Mcal =\Mcal_{\rho_1,\rho_2} := \left\{(u_1,u_2)\in H^1_0(\Omega;\R^2):\int_\Omega u_i^2=\rho_i, \ i=1,2 \right\},
\end{equation}
in which case the unknowns $\omega_i$ play the role of Lagrange multipliers. Our main aim is to
provide conditions on $p$ and $(\rho_1,\rho_2)$ (and also on $\mu_1,\mu_2,\beta$) so that
$\left.\Ecal\right|_{\Mcal}$ admits minima, either global or local. We call such solutions
\emph{least energy solutions}, or \emph{ground states}. Secondly, we consider the stability properties of such ground states, with respect
to the evolution system \eqref{eq:system_schro}.

An alternative, non equivalent point of view ---which we do not treat here--- is that of considering the parameters $\omega_i$ in \eqref{eq:system_elliptic} as fixed,
without any normalizing condition on the functions $u_i$.
This leads to an alternative definition of ground states, that of \emph{least action solutions}: for a detailed discussion of this topic we refer the interested reader to
the introduction of \cite{ntvAnPDE}. Starting from \cite{MR2135447,MR2358296,MR2263573,MR2252973,MR2302730,Sirakov2007,MR1939088,MR2629888}, the literature dealing with this approach is vast and we do not
even make an attempt to summarize it here. As a matter of fact, the results for non-normalized solutions cannot be directly extended to the normalized ones: among the other
reasons, because in the latter case the ambient space is the Hilbert manifold $\Mcal$ (rather than a vector space).

Going back to normalized solutions, the simplest case one can face is that of a single NLS equation on $\R^N$, with a pure power nonlinearity. In such case, the problem can be completely solved by
simple scaling arguments. This structure breaks down whenever one considers a system, as well as non-homogeneous nonlinearities, bounded domains or confining potentials. Apart when global minimization 
can be applied, see  \cite{RoseWeinstein88}, as far as we know the first
result in the literature is due to Jeanjean \cite{MR1430506}, for the superlinear, Sobolev-subcritical
NLS single equation on $\R^N$ with a non-homogeneous nonlinearity. In recent years, other papers appeared, dealing with the NLS equation or system, always in the Sobolev subcritical regime,  either on $\R^N$ \cite{MR3539467,MR3534090,MR3639521,MR3638314,MR3777573,0951-7715-31-5-2319,2017arXiv170302832B}
or on a bounded domain \cite{MR2928850,ntvAnPDE,ntvDCDS,MR3660463,MR3689156,BonheureJeanjeanNoris}.
These two settings are rather different in nature: each one requires a specific approach, and the results are in general not comparable.
A key difference is that $\R^N$ is invariant under translations and dilations, which has pros and cons: on the one hand, translations are responsible for a loss of compactness;
on the other hand, in the Sobolev subcritical case, dilations can be used to produce variations and eventually construct natural constraints such as the so-called Pohozaev manifold.
This tool is not available when working in bounded domains, and also the gain of compactness is lost when we face the Sobolev critical case.

However, a common key tool in the study of normalized solution is the Gagliardo-Nirenberg inequality (see \eqref{eq:GN} below), which can be used to estimate the non-quadratic part in $\Ecal$
in terms of the quadratic one. As a consequence, the exponent $p$ in \eqref{eq:system_elliptic} can be classified according to the following four cases:
\begin{itemize}
\item[(H1)] superlinear, $L^2$--subcritical: $1<p<1+4/N$;
\item[(H2)] $L^2$--critical: $p=1+{4/N}$;
\item[(H3)] $L^2$-supercritical, Sobolev--subcritical: $1+4/N<p<1 + 4/(N-2)^+=2^*-1$;
\item[(H4)] Sobolev--critical: $p=2^*-1$, for $N\geq 3$.
\end{itemize}
In the first three cases, the study of the single equation
\begin{equation}\label{eq:singleeq}
\begin{cases}
-\Delta u_1 + \omega_1 u=\mu_1 u_1|u_1|^{p-1}\\
\int_\Omega u_1^2=\rho_1,\quad u_1\in H^1_0(\Omega),
\end{cases}
\end{equation}
has been carried on in \cite{ntvAnPDE,MR3689156}. Notice that \eqref{eq:singleeq} is a particular case of \eqref{eq:system_elliptic}, when $\rho_2=0$, with associated energy
$u_1\mapsto \Ecal(u_1,0)$. Summarizing, it is known that
\begin{itemize}
 \item (H1) implies that \eqref{eq:singleeq} has a solution, which is a global minimizer, for $\rho_1\ge0$;
 \item (H2) implies that \eqref{eq:singleeq} has a solution, which is a global minimizer, for $0\le\rho_1<\rho_*(\Omega,N,p,\mu_1)<+\infty$;
 \item (H3) implies that \eqref{eq:singleeq} has a solution, which is a local minimizer, for $0\le\rho_1<\rho_*(\Omega,N,p,\mu_1)<+\infty$, and a second one of mountain pass type.
\end{itemize}
Moreover, all the minimizers above are associated to orbitally stable solitary waves of the corresponding evolutive equation.

Up to our knowledge, the only paper dealing with the NLS system \eqref{eq:system_elliptic} (with both $\rho_i>0$) is  \cite{ntvDCDS}. Among other things, in that paper we deal with the
$L^2$-supercritical, Sobolev--subcritical case (H3), obtaining the existence of orbitally stable solitary waves, in case both $\rho_1$, $\rho_2$ are sufficiently small and $\rho_1/\rho_2$ is uniformly bounded away from $0$ and $+\infty$. This result is perturbative in nature, the existence following by a multi-parametric extension of a Ambrosetti-Prodi-type reduction \cite{AmbrosettiProdiBook} and the stability by the
Grillakis-Shatah-Strauss stability theory \cite{GrillakisShatahStrauss}.
The corresponding solutions are close to suitably normalized
first eigenfunctions of the Dirichlet laplacian.

The aim of the present paper is twofold: on the one hand, in the cases (H1)-(H2)-(H3), we extend to systems the above described results obtained in
\cite{ntvAnPDE,MR3689156} for the single equation; on the other hand, we treat for the first time the Sobolev critical case (H4), obtaining results which are
new also in the case of a single equation. Now we describe in details our results.\medskip

In what follows, we take
\[
\Omega\subset \R^N \text{ a Lipschitz bounded  domain ($N\geq 1$),}\qquad \mu_1,\mu_2>0,\qquad \beta\in \R,
\]
and denote by
$C_N$ the best constant appearing in the Gagliardo-Nirenberg inequality in the $L^2$-critical case (see \eqref{eq:def_CN} ahead) while $S_N$ is the best constant appearing in the Sobolev inequality (see \eqref{eq:Sobolev_constant}).

To start with, as we already mentioned, both (H1) and (H2) can be treated in a quite standard way by using the Gagliardo-Nirenberg inequality.
Even though this result is somewhat expected, we provide it here since we could not find a precise reference to cite.
\begin{theorem}[$L^2$--subcritical and $L^2$--critical cases: existence and stability]\label{prop:subcritical}
 Suppose that one of the following cases occurs
\begin{itemize}
\item[(i)] $1<p<1+{4/N}$ and $\rho_1,\rho_2\ge0$;
\item[(ii)] $p=1+{4/N}$ and $\rho_1,\rho_2\ge0$ are such that (see Fig. \ref{fig:N=2})
\begin{equation}\label{N=2condition2}
\max\left\{\mu_1 \rho_1^{2/N},\ \mu_2 \rho_2^{2/N},\ \mu_1\rho_1^{2/N}+\mu_2 \rho_2^{2/N} +\frac{NC_N}{N+2}     \left((\beta^+)^2-\mu_1 \mu_2\right)(\rho_1\rho_2)^{2/N}\right\}<\frac{N+2}{NC_{N}}
\end{equation}

\end{itemize}
Then:
\begin{itemize}
\item[a)] the level $\inf_{\Mcal}\Ecal$ is achieved at some $(u_1,u_2)\in \Mcal$, which is a non-negative solution of \eqref{eq:system_elliptic} for some $(\omega_1,\omega_2)\in \R^2$ ($u_i>0$ when $\rho_i>0$);
\item[b)]  the set of ground states
\[
G=\left\{ (u_1,u_2) \in H^1_0(\Omega;\C^2): \ (|u_1|,|u_2|)\in\Mcal,\ \Ecal(u_1,u_2)=
\inf_{\Mcal}\Ecal \right\}
\]
is conditionally orbitally stable.
\end{itemize}
\end{theorem}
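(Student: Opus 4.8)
The plan is to first establish that the infimum $m:=\inf_{\Mcal}\Ecal$ is finite and attained, and then deduce orbital stability by a standard compactness-of-minimizing-sequences argument à la Cazenave--Lions.

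\emph{Step 1: coercivity and boundedness of minimizing sequences.} The key tool is the Gagliardo--Nirenberg inequality. One estimates the quartic (more generally, $(p+1)$-homogeneous) term
\[
\int_\Omega \mu_1|u_1|^{p+1}+2\beta|u_1|^{(p+1)/2}|u_2|^{(p+1)/2}+\mu_2|u_2|^{p+1}
\le \left(\mu_1+\beta^++\mu_2\right)\,\text{(something)}
\]
by $\|\nabla u_i\|_2^{N(p-1)/2}\rho_i^{(p+1)/2-N(p-1)/4}$ up to the GN constant, using $\beta\le\beta^+$ and Young's inequality to split the cross term. In case (i), $1<p<1+4/N$ means the exponent $N(p-1)/2<2$, so the negative term is of strictly lower order than $\|\nabla(u_1,u_2)\|_2^2$, whence $\Ecal$ is coercive on $\Mcal$ and bounded below; any minimizing sequence is bounded in $H^1_0(\Omega;\R^2)$. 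In case (ii), $p=1+4/N$ and the negative term is exactly quadratic in the gradient: here one must use the sharp GN constant $C_N$ and the cross-term estimate, and condition \eqref{N=2condition2} is precisely what guarantees that the coefficient of $\|\nabla u_i\|_2^2$ in the resulting lower bound stays positive. The diagonalization producing the three quantities in the max comes from optimizing over how one distributes the $\|\nabla u_1\|_2^2$ and $\|\nabla u_2\|_2^2$ budget; I expect the quadratic-form computation behind \eqref{N=2condition2} to be the main technical obstacle, essentially linear algebra but needing care with the cross term and with the degenerate cases $\rho_i=0$.

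\emph{Step 2: existence of a minimizer.} Take a minimizing sequence $(u_1^n,u_2^n)$; by Step 1 it is bounded in $H^1_0$, so up to a subsequence $u_i^n\rightharpoonup u_i$ weakly in $H^1_0$ and, since $\Omega$ is bounded and $p+1<2^*$ (strictly, because we are in the subcritical/critical-$L^2$ regime, never the Sobolev-critical one here), $u_i^n\to u_i$ strongly in $L^{p+1}(\Omega)$ and in $L^2(\Omega)$ by Rellich--Kondrachov. Strong $L^2$ convergence gives $\int_\Omega u_i^2=\rho_i$, so $(u_1,u_2)\in\Mcal$; strong $L^{p+1}$ convergence passes the nonlinear term to the limit, and weak lower semicontinuity of the Dirichlet integral gives $\Ecal(u_1,u_2)\le\liminf\Ecal(u_1^n,u_2^n)=m$, hence equality, so the infimum is attained. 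Replacing $u_i$ by $|u_i|$ does not increase $\Ecal$ (it leaves all terms invariant since they depend only on $|\nabla u_i|$ and $|u_i|$), so we may take $u_i\ge0$; the Lagrange multiplier rule on the manifold $\Mcal$ yields $(\omega_1,\omega_2)\in\R^2$ with $(u_1,u_2)$ solving \eqref{eq:system_elliptic}, and strong maximum principle / Hopf gives $u_i>0$ in $\Omega$ whenever $\rho_i>0$. (Here one should note that compactness was \emph{for free} because $\Omega$ is bounded — no vanishing/dichotomy analysis is needed, unlike on $\R^N$.)

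\emph{Step 3: orbital stability.} The set $G$ of ground states is nonempty by Step 2. Conditional orbital stability is proved by contradiction in the usual way: if it failed, there would be initial data $(\Psi_1^{0,n},\Psi_2^{0,n})$ with $(|\Psi_1^{0,n}|,|\Psi_2^{0,n}|)\to\Mcal$ and $\Ecal(\Psi_1^{0,n},\Psi_2^{0,n})\to m$, converging to $G$, whose solutions $(\Psi_1^n,\Psi_2^n)$ at times $t_n$ leave a fixed neighborhood of $G$. But conservation of mass and energy under the flow \eqref{eq:system_schro} forces $(\Psi_1^n(t_n),\Psi_2^n(t_n))$ to be a minimizing sequence for $\Ecal$ on (an approximation of) $\Mcal$; by the argument of Step 2 — applied to the moduli, and using that every such minimizing sequence is relatively compact in $H^1$ and its limits lie in $G$ — it must converge, along a subsequence, to an element of $G$, contradicting that it stays away from $G$. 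The only point requiring a word is the passage from "minimizing sequence with masses $\to\rho_i$" to "relatively compact with limit in $G$": one rescales $u_i^n$ to exactly the right mass (a perturbation that is $o(1)$ in $H^1$ by boundedness), applies Step 2, and invokes continuity of $\Ecal$. This is exactly the Cazenave--Lions scheme, and in the bounded-domain setting it goes through without the concentration-compactness complications one has on $\R^N$.
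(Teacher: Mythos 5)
Your proposal is correct and follows essentially the same route as the paper: part a) via Gagliardo--Nirenberg coercivity (with the positive-definiteness of the $2\times2$ quadratic form in $(\|\nabla u_1\|_2,\|\nabla u_2\|_2)$ being exactly condition \eqref{N=2condition2}) plus the direct method, and part b) via conservation of mass and energy together with compactness of minimizing sequences. The only organizational difference is that the paper proves stability by first identifying $G$ with a set of local ground states $G_{\bar\alpha}$ so as to share the argument with the supercritical case (using the diamagnetic inequality, Lemma \ref{lemma:c_c'}, to pass between complex functions and their moduli), whereas you run the classical Cazenave--Lions scheme directly for global minimizers, which the paper itself notes is sufficient in this regime.
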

\begin{remark}
We recall the definition of orbital stability in Section \ref{sec:stab}. Actually, notice that we prove \emph{conditional} orbital stability,
where the condition is that the solution of system \eqref{eq:system_schro}, with Cauchy datum $(\psi_{1},\psi_{2})\in H^1_0(\Omega;\C^2)$,
exists locally in time for a time interval which is uniform in $\|(\psi_{1},\psi_{2})\|_{H^1_0}$, and that $\Qcal$ and $\Ecal$ are preserved
along the solutions. This holds true under further assumptions on $\mu_i$, $\beta$ and $\Omega$, see also \cite{ntvDCDS} and references therein;
however, being the field so vast, even a rough summary of well-posedness for Schr\"odinger systems on bounded domains is far beyond the scopes
of this paper.
\end{remark}
\begin{remark}\label{rem:uniform_beta1}
Observe that, for $\beta\leq 0$,  \eqref{N=2condition2} reduces to
\begin{equation*}
\max\left\{\mu_1 \rho_1^{2/N},\ \mu_2 \rho_2^{2/N}\right\}<\frac{N+2}{NC_{N}}.
\end{equation*}
which is independent from $\beta$.
\end{remark}
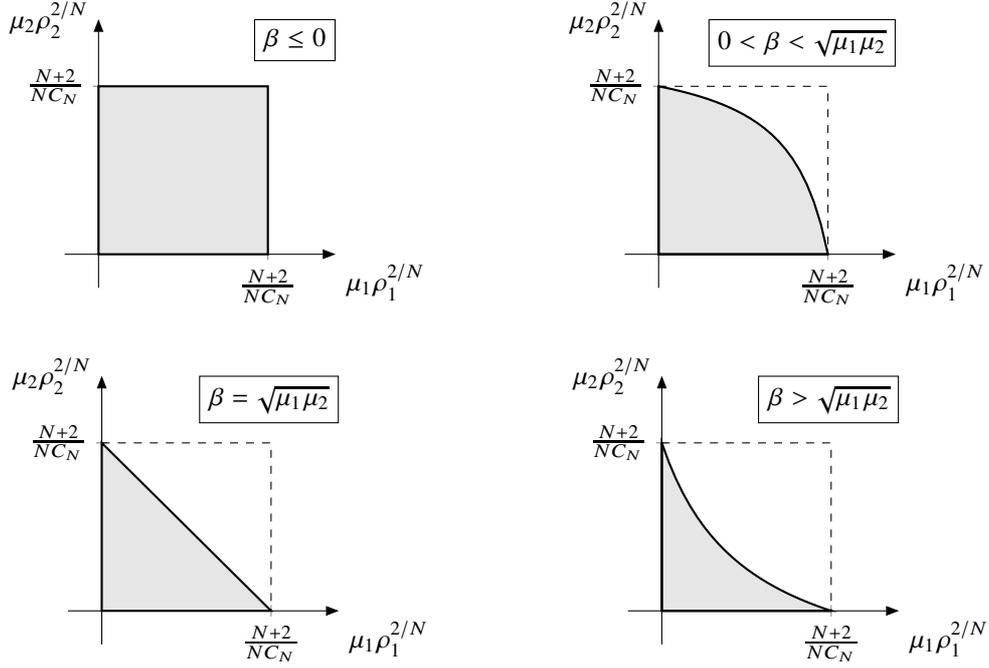
\begin{figure}[htbp]
\begin{center}
\begin{tikzpicture} \begin{axis}[
   axis lines=center,
   width=.35\linewidth,
   height=.35\linewidth,
   axis line style={->},
   typeset ticklabels with strut,
   x label style={below right},
   y label style={left},
   xmin=-.2, xmax=1.4,
   ymin=-.2, ymax=1.4,
   xlabel={$\mu_1\rho_1^{2/N}$},
   ylabel={$\mu_2\rho_2^{2/N}$},
   xtick={1},
   xticklabels={$\frac{N+2}{NC_N}$},
   ytick={1},
   yticklabels={$\frac{N+2}{NC_N}$},
   %\accuratedued{100},
   ]
\draw (1.4,1.4) node[below left, draw=black, fill=white] {$\beta \le0$};
\filldraw[thick,fill=gray!20!white] (0,0)--(0,1)--(1,1)--(1,0)--cycle;
\end{axis}
\end{tikzpicture}
\hspace{.1\linewidth}
\begin{tikzpicture} \begin{axis}[
   axis lines=center,
   width=.35\linewidth,
   height=.35\linewidth,
   axis line style={->},
   typeset ticklabels with strut,
   x label style={below right},
   y label style={left},
   xmin=-.2, xmax=1.4,
   ymin=-.2, ymax=1.4,
   xlabel={$\mu_1\rho_1^{2/N}$},
   ylabel={$\mu_2\rho_2^{2/N}$},
   xtick={1},
   xticklabels={$\frac{N+2}{NC_N}$},
   ytick={1},
   yticklabels={$\frac{N+2}{NC_N}$},
   %\accuratedued{100},
   ]
\draw (1.4,1.4) node[below left, draw=black, fill=white] {$0<\beta <\sqrt{\mu_1\mu_2}$};
\addplot[domain=0:1, thick, fill=gray!20!white]
 {(1-x)/(1-.8*x)}\closedcycle;
\draw[thick] (0,1)--(0,0)--(1,0);
\draw[dashed] (0,1)--(1,1)--(1,0);
\end{axis}
\end{tikzpicture}
\bigskip\\
\begin{tikzpicture} \begin{axis}[
   axis lines=center,
   width=.35\linewidth,
   height=.35\linewidth,
   axis line style={->},
   typeset ticklabels with strut,
   x label style={below right},
   y label style={left},
   xmin=-.2, xmax=1.4,
   ymin=-.2, ymax=1.4,
   xlabel={$\mu_1\rho_1^{2/N}$},
   ylabel={$\mu_2\rho_2^{2/N}$},
   xtick={1},
   xticklabels={$\frac{N+2}{NC_N}$},
   ytick={1},
   yticklabels={$\frac{N+2}{NC_N}$},
   %\accuratedued{100},
   ]
\draw (1.4,1.4) node[below left, draw=black, fill=white] {$\beta =\sqrt{\mu_1\mu_2}$};
\filldraw[thick,fill=gray!20!white] (0,0)--(0,1)--(1,0)--cycle;
\draw[dashed] (0,1)--(1,1)--(1,0);
\end{axis}
\end{tikzpicture}
\hspace{.1\linewidth}
\begin{tikzpicture} \begin{axis}[
   axis lines=center,
   width=.35\linewidth,
   height=.35\linewidth,
   axis line style={->},
   typeset ticklabels with strut,
   x label style={below right},
   y label style={left},
   xmin=-.2, xmax=1.4,
   ymin=-.2, ymax=1.4,
   xlabel={$\mu_1\rho_1^{2/N}$},
   ylabel={$\mu_2\rho_2^{2/N}$},
   xtick={1},
   xticklabels={$\frac{N+2}{NC_N}$},
   ytick={1},
   yticklabels={$\frac{N+2}{NC_N}$},
   %\accuratedued{100},
   ]
\draw (1.4,1.4) node[below left, draw=black, fill=white] {$\beta >\sqrt{\mu_1\mu_2}$};
\addplot[domain=0:1, thick, fill=gray!20!white]
 {(1-x)/(1+2*x)}\closedcycle;
\draw[thick] (0,1)--(0,0)--(1,0);
\draw[dashed] (0,1)--(1,1)--(1,0);
\end{axis}
\end{tikzpicture}
\caption{condition \eqref{N=2condition2} read in terms of $(\mu_1\rho_1^{2/N},\mu_2\rho_2^{2/N})$ (see also Remark \ref{rem:N=2} ahead).}\label{fig:N=2}
\end{center}
\end{figure}

Now we turn to the case in which either (H3) or (H4) hold true, i.e. when $1+4/N < p \le 2^*-1$ (no upper bound if $N=1,2$). Contrarily to the previous cases, in this one it is known that $\left.\Ecal\right|_{\Mcal}$ is not
bounded below, see Lemma \ref{lemma:geometry_mp} ahead. Nonetheless we will show that, even though
no global minima can exist, local ones do, in case $(\rho_1,\rho_2)$ belongs to some explicit set. To detect the existence of such minima we need to introduce some auxiliary problem and further notations. Let, for $\alpha\geq \lambda_1(\Omega)$ (the first Dirichlet eigenvalue of $-\Delta$ in $\Omega$),
\begin{equation}\label{eq:Balpha}
\begin{split}
\Bcal_\alpha &:= \left\{(u_1,u_2)\in\Mcal:\int_\Omega |\nabla u_1|^2+|\nabla u_2|^2 \leq
(\rho_1+\rho_2)\alpha \right\},\\
\Ucal_\alpha &:= \left\{(u_1,u_2)\in\Mcal:\int_\Omega |\nabla u_1|^2+
|\nabla u_2|^2=
(\rho_1+\rho_2)\alpha \right\}
\end{split}
\end{equation}
Notice that $\Bcal_\alpha$ is not empty, since it contains at least a pair of suitably normalized first eigenfunctions, and that $\Ucal_\alpha$ is the topological boundary of $\mathcal{B}_\alpha$ in $\mathcal{M}$.
Moreover, let us define
\begin{equation}\label{eq:calpha}
c_\alpha := \inf_{\Bcal_\alpha}\Ecal,\qquad \hat c_\alpha := \inf_{\Ucal_\alpha}\Ecal.
\end{equation}
Being $\Bcal_\alpha$ weakly closed in $\Mcal$, in the Sobolev subcritical case the level $c_\alpha$ is achieved for any $\alpha\geq \lambda_1(\Omega)$, possibly on $\Ucal_\alpha\subset\Bcal_\alpha$. Therefore, in order to find a solution of \eqref{eq:system_elliptic}, it is sufficient to find $\alpha$ such that $c_\alpha < \hat c_\alpha$ (and this will be our strategy).

On the contrary, in the Sobolev critical case, it is also an issue to prove that $c_\alpha$ is achieved: indeed, since $H^1_0(\Omega)$ is not compactly embedded in $L^{2^*}(\Omega)$, $\Ecal|_{\Mcal}$ is no longer weakly lower semicontinuous. To overcome this difficulty, in the spirit of the celebrated paper by Brezis and Nirenberg \cite{MR709644}, we are able to recover the compactness of the minimizing sequences associated to $c_\alpha$ by imposing a bound on the masses $\rho_1,\rho_2$ and on $\alpha$. More precisely, we have the following key result.
\begin{proposition}\label{prop:compact_intro}
%Let $\Omega\subset\R^N$ be a bounded Lipschitz domain, $\mu_1,\mu_2>0$, $\beta\in\R$,
Consider $N\ge3$ and $p=2^*-1$. Suppose that $\rho_1,\rho_2\geq 0$ and $\alpha\geq \lambda_1(\Omega)$ are such that
\begin{equation}\label{eq:compact_intro}
(\rho_1+\rho_2)(\alpha - \lambda_1(\Omega)) \le \frac{1}{\dd^{(N-2)/2}},
\end{equation}
where
\begin{equation}\label{eq:def_Lambda}
\Lambda:=\frac{2S_{N}}{2^*} \max_{\{x^2+y^2 =1\}}
\left(\mu_1 |x|^{2^*} + \mu_2 |y|^{2^*}
+2\beta^+  |xy|^{2^*/2} \right).
\end{equation}
Then any minimizing sequence associated to $c_\alpha$ is relatively compact in $\Bcal_\alpha$. In particular, $c_\alpha$ is achieved.
\end{proposition}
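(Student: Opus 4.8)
The plan is a Brezis--Nirenberg-type compactness argument: the only obstruction is the failure of weak lower semicontinuity of $\Ecal$ caused by the critical embedding $H^1_0(\Omega)\hookrightarrow L^{2^*}(\Omega)$, which is overcome by a Brezis--Lieb splitting, the role of \eqref{eq:compact_intro} being to rule out the escape of a nontrivial concentrating bubble. Concretely, I would start from a minimizing sequence $(u_{1,n},u_{2,n})$ for $c_\alpha$ in $\Bcal_\alpha$ (the level is finite, since the Sobolev inequality bounds $\Ecal$ from below on $\Bcal_\alpha$). As $\int_\Omega(|\nabla u_{1,n}|^2+|\nabla u_{2,n}|^2)\le(\rho_1+\rho_2)\alpha$ and the masses are fixed, the sequence is bounded in $H^1_0(\Omega;\R^2)$; up to a subsequence $(u_{1,n},u_{2,n})\rightharpoonup(u_1,u_2)$ in $H^1_0$, with $u_{i,n}\to u_i$ in $L^q(\Omega)$ for $q<2^*$ and a.e. The mass constraints pass to the limit by the Rellich theorem, and the bound defining $\Bcal_\alpha$ by weak lower semicontinuity of the Dirichlet integral; hence $(u_1,u_2)\in\Bcal_\alpha$, so $\Ecal(u_1,u_2)\ge c_\alpha$.

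Set $v_{i,n}:=u_{i,n}-u_i\rightharpoonup 0$ in $H^1_0$ and $B_n:=\int_\Omega(|\nabla v_{1,n}|^2+|\nabla v_{2,n}|^2)$. Weak convergence gives $\int_\Omega|\nabla u_{i,n}|^2=\int_\Omega|\nabla u_i|^2+\int_\Omega|\nabla v_{i,n}|^2+o(1)$. For the coupled critical term I would invoke the $\R^2$-valued Brezis--Lieb lemma for $\Phi(s,t):=\mu_1|s|^{2^*}+2\beta|s|^{2^*/2}|t|^{2^*/2}+\mu_2|t|^{2^*}$: since $2^*/2>1$, $\Phi\in C^1(\R^2)$ with $|\nabla\Phi(\xi)|\le C|\xi|^{2^*-1}$, exactly the regularity and growth that lemma requires with exponent $2^*$; together with the uniform bound on $\int_\Omega(u_{1,n}^2+u_{2,n}^2)^{2^*/2}$ it gives $\int_\Omega\Phi(u_{1,n},u_{2,n})=\int_\Omega\Phi(u_1,u_2)+\int_\Omega\Phi(v_{1,n},v_{2,n})+o(1)$. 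Subtracting the two splittings,
\[
\Ecal(u_{1,n},u_{2,n})=\Ecal(u_1,u_2)+\tfrac12 B_n-\tfrac1{2^*}\int_\Omega\Phi(v_{1,n},v_{2,n})+o(1),
\]
and since the left-hand side tends to $c_\alpha\le\Ecal(u_1,u_2)$, we obtain $\limsup_n\bigl(\tfrac12 B_n-\tfrac1{2^*}\int_\Omega\Phi(v_{1,n},v_{2,n})\bigr)\le 0$.

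It remains to force $B_n\to 0$. Pointwise $\Phi(\xi)\le M|\xi|^{2^*}$, $M$ being the maximum in \eqref{eq:def_Lambda} (the cross term is nonpositive for $\beta\le0$ and dominated by its $\beta^+$ counterpart for $\beta>0$); applying the Sobolev inequality to $|v_n|:=(v_{1,n}^2+v_{2,n}^2)^{1/2}\in H^1_0(\Omega)$ and using $\bigl|\nabla|v_n|\bigr|^2\le|\nabla v_{1,n}|^2+|\nabla v_{2,n}|^2$ a.e., one bounds $\int_\Omega\Phi(v_{1,n},v_{2,n})$ by a fixed multiple of $B_n^{2^*/2}$, so the estimate above reads $\tfrac12 B_n-c\,B_n^{2^*/2}\le o(1)$, where $\dd$ in \eqref{eq:def_Lambda} is defined precisely so that $\tfrac12 t-c\,t^{2^*/2}>0$ on the interval $\bigl(0,\dd^{-(N-2)/2}\bigr)$. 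On the other hand, Poincaré's inequality gives $\int_\Omega|\nabla u_i|^2\ge\lambda_1(\Omega)\rho_i$, so from $(u_{1,n},u_{2,n})\in\Bcal_\alpha$ and the gradient splitting $\limsup_n B_n\le(\rho_1+\rho_2)(\alpha-\lambda_1(\Omega))\le\dd^{-(N-2)/2}$, using \eqref{eq:compact_intro}. These two facts are compatible only if $B_n\to0$ (in the borderline case where a subsequence has $B_n\to\dd^{-(N-2)/2}$, one further observes that equality must then hold throughout the Poincaré estimate, forcing $(u_1,u_2)$ to be a pair of first Dirichlet eigenfunctions realizing $c_\alpha$ at an interior point of $\Bcal_\alpha$, hence a solution of \eqref{eq:system_elliptic} --- which is impossible). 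Therefore $B_n\to0$, so $v_{i,n}\to0$ in $H^1_0(\Omega)$ and $(u_{1,n},u_{2,n})\to(u_1,u_2)$ strongly in $H^1_0(\Omega;\R^2)$; then $(u_1,u_2)\in\Bcal_\alpha$ with $\Ecal(u_1,u_2)=\lim_n\Ecal(u_{1,n},u_{2,n})=c_\alpha$. Since the argument applies to an arbitrary subsequence of an arbitrary minimizing sequence, every such sequence is relatively compact in $\Bcal_\alpha$, and $c_\alpha$ is attained.

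The crux lies entirely in the last step --- excluding the survival of the remainder $B_n$ --- where three ingredients must be weighed against one another: the Sobolev inequality (controlling $\int_\Omega\Phi(v_{1,n},v_{2,n})$ by $B_n^{2^*/2}$), the Poincaré inequality (bounding $B_n$ by the ``excess'' $(\rho_1+\rho_2)(\alpha-\lambda_1(\Omega))$), and the Brezis--Lieb energy identity; the constant $\dd$ of \eqref{eq:def_Lambda} and the exponent in \eqref{eq:compact_intro} are exactly the threshold at which these balance, so the constant bookkeeping must be carried out with care, including the borderline case. A minor but genuinely necessary point is to dispose of a Brezis--Lieb lemma valid for the coupled nonlinearity $\Phi$ rather than just for pure powers --- whence the emphasis above on its $C^1$ regularity and $|\xi|^{2^*-1}$ gradient growth.
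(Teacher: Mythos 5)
Your argument is, in substance, the paper's own: Proposition \ref{prop:compact_intro} is deduced there from Proposition \ref{prop:Sobcritconv}, whose proof is exactly your Brezis--Nirenberg scheme --- weak limit admissible for $c_\alpha$ (so $\Ecal(\bar u_1,\bar u_2)\ge c_\alpha$), Brezis--Lieb splitting $\Ecal(u_{1,n},u_{2,n})=\Ecal(\bar u_1,\bar u_2)+\Ecal(v_{1,n},v_{2,n})+o(1)$, the Sobolev bound $\tfrac{1}{2^*}\int_\Omega\Phi(v_{1,n},v_{2,n})\le\tfrac{\dd}{2}B_n^{2^*/2}$, and the Poincar\'e bound $\limsup_n B_n\le(\rho_1+\rho_2)(\alpha-\lambda_1(\Omega))$. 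Two implementation details differ, both harmless: the paper does not invoke a vector-valued Brezis--Lieb lemma for your $\Phi$, but splits the pure powers by the scalar lemma \eqref{eq:BL} and handles the mixed term $\|(\bar u_1+v_{1,n})(\bar u_2+v_{2,n})\|_{2^*/2}^{2^*/2}$ by hand via \eqref{eq:ineq1}--\eqref{eq:ineq2} and H\"older; and it obtains the Sobolev step from the componentwise estimate \eqref{eq:importantestimateG} together with \eqref{eq:lambda_max_def}, rather than from your pointwise bound $\Phi(\xi)\le M|\xi|^{2^*}$ applied to $(v_{1,n}^2+v_{2,n}^2)^{1/2}$. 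Both routes produce the same constant $\dd/2$, so your bookkeeping is consistent with the paper's.

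The one place where you go beyond the paper --- the borderline case $B_n\to\dd^{-(N-2)/2}$ --- is also the one place where your argument fails. Your exclusion rests on the claim that a normalized pair of first eigenfunctions cannot solve \eqref{eq:system_elliptic}; but $(\sqrt{\rho_1}\varphi_1,\sqrt{\rho_2}\varphi_1)$ does solve it when $\beta=-\mu_1=-\mu_2$ (this is precisely the content of the remark following Theorem \ref{thm:existence_bad_cond}), so there is no contradiction to be had there. More fundamentally, even if you could show that $c_\alpha$ is attained in that scenario, it would not rescue the conclusion: the minimizing sequence under consideration has $B_n\not\to0$, so it is not relatively compact, and the proposition as you are trying to prove it would simply be false for that sequence. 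Note, however, that the paper itself only establishes the strict-inequality version (Proposition \ref{prop:Sobcritconv} assumes $<$ in place of $\le$ in \eqref{eq:compact_intro}, and the reduction of Proposition \ref{prop:compact_intro} to it silently drops the equality case). So your proof covers exactly what the paper's proof covers; the fix is to delete the parenthetical borderline argument and work under the strict inequality, which is what the paper effectively does.
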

Based on the previous proposition, we introduce the following set of admissible masses
\begin{equation}\label{eq:defA}
A := \left\{(\rho_1,\rho_2)\in [0,\infty)^2 : \begin{array}{l}
\text{$c_\alpha<\hat c_\alpha$ for some $\alpha\ge \lambda_1(\Omega)$,}\smallskip\\
\text{with $\alpha$ satisfying \eqref{eq:compact_intro} if $p=2^*-1$}
\end{array}
\right\}\cup\left\{(0,0)\right\}.
\end{equation}
Notice that, as a matter of fact, $A$ depends on $\Omega$, $N$, $p$, $\mu_1$, $\mu_2$ and $\beta$.
Moreover, if $(\rho_1,\rho_2)\in A$, then we can choose the local minimizer $(u_1,u_2)\in \Mcal$ to be a non-negative solution of
\eqref{eq:system_elliptic} for some $(\omega_1,\omega_2)\in \R^2$. Finally,we introduce the exponents $a$ and $r$
as
%\begin{equation*}
%a = a(N,p):= \frac{N(p-1)}{4}\in \left(1,\frac{N}{N-2}\right] ,\qquad r = r(N,p) := \frac{p+1}{4} - \frac{N(p-1)}{8}\in \left[0,\frac{1}{N}\right).
%\end{equation*}
\begin{equation}\label{eq:newexponent}
a = a(N,p):= \frac{N(p-1)}{4} ,\qquad r = r(N,p) := \frac{p+1}{4} - \frac{N(p-1)}{8}.
\end{equation}
Notice that these two constants appear naturally in this context because (up to a suitable multiple) they enter in the Gagliardo-Nirenberg inequality.
Observe also that
\begin{equation}\label{eq:sign_a-1}
0<a<1 \text{ if }1<p<1+4/N;
\quad a=1 \text{ if }p=1+4/N;
\quad a>1 \text{ if }p=1+4/N.
\end{equation}
\begin{theorem}[$L^2$--supercritical cases: existence]
\label{prop:supercritical}
%Let $\Omega\subset\R^N$ be a bounded Lipschitz domain, $\mu_1,\mu_2>0$, $\beta\in\R$,
Let $1+4/N < p \le 2^*-1$. If $A$ is defined as in \eqref{eq:defA}, then
\[
\text{$A$ is star-shaped with respect to $(0,0)$.}
\]
Moreover, there exists a positive constant $R=R(\Omega,N,p)$ such that if
$\rho_1,\rho_2\ge0$ are such that
\begin{equation}\label{eq:assnice}
\left[\max\{ \mu_1 \rho_1^{2r},\mu_2 \rho_2^{2r}\} +
\beta^+ \rho_1^r\rho_2^{r}  \right]\cdot (\rho_1+\rho_2)^{a-1} \le
R(\Omega,N,p),
\end{equation}
then $(\rho_1,\rho_2)\in A$. Here $a$ and $r$ are defined as in \eqref{eq:newexponent} and $R$ is explicit (see \eqref{eq:def_R} ahead).
\end{theorem}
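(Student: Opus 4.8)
The two assertions are proved independently.

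\textbf{Star-shapedness.} The key tool is the scaling $T_t(u_1,u_2):=(\sqrt{t}\,u_1,\sqrt{t}\,u_2)$, $t\in(0,1]$, a bijection $\Mcal_{\rho_1,\rho_2}\to\Mcal_{t\rho_1,t\rho_2}$: since both $\int_\Omega u_i^2$ and $\int_\Omega|\nabla u_i|^2$ scale by $t$, it carries $\Bcal_\alpha$ onto $\Bcal_\alpha$ and $\Ucal_\alpha$ onto $\Ucal_\alpha$ \emph{for the same} $\alpha$, while $\Ecal(T_tu)=t\bigl(Q(u)-t^{\theta}P(u)\bigr)$, where $Q(u):=\tfrac12\int_\Omega|\nabla u_1|^2+|\nabla u_2|^2$, $P$ is the part of $\Ecal$ carrying the power nonlinearity (so $\Ecal=Q-P$), and $\theta:=(p-1)/2>0$. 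Given $(\rho_1,\rho_2)\in A$ with a witnessing $\alpha$, choose $u^{*}\in\Bcal_\alpha$ with $\Ecal(u^{*})<\hat c_\alpha$ (possible since $c_\alpha<\hat c_\alpha$). Writing $Q_0$ for the constant value of $Q$ on $\Ucal_\alpha$ (so $\hat c_\alpha=Q_0-\sup_{\Ucal_\alpha}P$, with $\sup_{\Ucal_\alpha}P<\infty$ by the Sobolev embedding $H^1_0(\Omega)\hookrightarrow L^{p+1}(\Omega)$) and $e:=P(u^{*})-\sup_{\Ucal_\alpha}P$, the choice of $u^{*}$ gives $Q(u^{*})-Q_0<e$, and $\Ecal(u^{*})<\hat c_\alpha$ forces $u^{*}\notin\Ucal_\alpha$, i.e.\ $Q(u^{*})-Q_0<0$. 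For $t\in(0,1)$ we then have $Q(u^{*})-Q_0<t^{\theta}e$ — indeed $t^{\theta}e\ge0>Q(u^{*})-Q_0$ if $e\ge0$, and $t^{\theta}e\ge e>Q(u^{*})-Q_0$ if $e<0$ — so $c_\alpha(t\rho_1,t\rho_2)\le\Ecal(T_tu^{*})=t\bigl(Q(u^{*})-t^{\theta}P(u^{*})\bigr)<t\bigl(Q_0-t^{\theta}\sup_{\Ucal_\alpha}P\bigr)=\hat c_\alpha(t\rho_1,t\rho_2)$. Since $\Lambda$ does not depend on the masses, \eqref{eq:compact_intro} is only improved by passing from $(\rho_1,\rho_2)$ to $(t\rho_1,t\rho_2)$, $t\le1$; together with $(0,0)\in A$, this shows $A$ is star-shaped with respect to the origin.

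\textbf{The sufficient condition: setup.} By the star-shapedness it suffices to produce, for $(\rho_1,\rho_2)$ satisfying \eqref{eq:assnice}, one admissible $\alpha\ge\lambda_1(\Omega)$ with $c_\alpha<\hat c_\alpha$. On $\Ucal_\alpha$ one has $Q\equiv\tfrac12 s$ with $s:=(\rho_1+\rho_2)\alpha$; estimating the power term of $\Ecal$ by the Gagliardo--Nirenberg inequality $\int_\Omega|u_i|^{p+1}\le C\|\nabla u_i\|_2^{2a}\|u_i\|_2^{4r}$ together with Cauchy--Schwarz on the coupling term when $p<2^{*}-1$, and by the sharp Sobolev inequality — which is exactly where the constant $\Lambda$ of \eqref{eq:def_Lambda} enters — when $p=2^{*}-1$, gives
\[
\hat c_\alpha\ \ge\ \tfrac12\,s\ -\ c_1\,s^{a}\,K,\qquad K:=\mu_1\rho_1^{2r}+\mu_2\rho_2^{2r}+2\beta^{+}\rho_1^{r}\rho_2^{r},
\]
with $c_1$ explicit and $c_1K=\Lambda/2$ in the critical case. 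Since $a>1$ in this range (see \eqref{eq:sign_a-1}), the right-hand side increases in $s$ up to its unique critical point $s_{*}$, where it equals $\tfrac{a-1}{2a}s_{*}$; I take $\alpha:=s_{*}/(\rho_1+\rho_2)$.

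\textbf{The sufficient condition: upper bound and conclusion.} For the upper bound on $c_\alpha$ I test $\Ecal$ on $L^{2}$-normalized first Dirichlet eigenfunctions lying in $\Bcal_\alpha$: when $\beta\ge0$, both components proportional to the first eigenfunction of $\Omega$, giving $c_\alpha\le\tfrac12\lambda_1(\Omega)(\rho_1+\rho_2)$; when $\beta<0$, the two components first eigenfunctions of fixed disjoint subdomains $\Omega_1,\Omega_2\Subset\Omega$, so the sign-indefinite coupling term drops out and $c_\alpha\le\tfrac12\bar\lambda(\rho_1+\rho_2)$ with $\bar\lambda:=\max\{\lambda_1(\Omega_1),\lambda_1(\Omega_2)\}$. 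In either case the competitor lies in $\Bcal_\alpha$ as soon as $\alpha$ is at least the relevant first eigenvalue, which the choice of $s_{*}$ guarantees under \eqref{eq:assnice}. Comparing the two bounds, $c_\alpha<\hat c_\alpha$ reduces to an inequality $K\,(\rho_1+\rho_2)^{a-1}\le R$ for an explicit $R=R(\Omega,N,p)$ assembled from $\bar\lambda$ (or $\lambda_1(\Omega)$), $C$ (equivalently $C_N$), $a$ and $p$; and $K\le2\bigl[\max\{\mu_1\rho_1^{2r},\mu_2\rho_2^{2r}\}+\beta^{+}\rho_1^{r}\rho_2^{r}\bigr]$, so \eqref{eq:assnice} implies it after halving $R$. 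In the critical case one must moreover check that this $\alpha$ obeys \eqref{eq:compact_intro}; but a direct computation gives $s_{*}=(a\Lambda)^{-(N-2)/2}$, whence $(\rho_1+\rho_2)(\alpha-\lambda_1(\Omega))=s_{*}-\lambda_1(\Omega)(\rho_1+\rho_2)\le s_{*}<\Lambda^{-(N-2)/2}$ because $a>1$, so \eqref{eq:compact_intro} holds automatically, $c_\alpha$ is attained by Proposition~\ref{prop:compact_intro} (necessarily in $\Bcal_\alpha\setminus\Ucal_\alpha$), and $(\rho_1,\rho_2)\in A$.

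\textbf{Main obstacle.} The delicate point is making $R$ genuinely independent of $\mu_1,\mu_2,\beta$. In the Sobolev-subcritical range this is essentially automatic, the comparison $c_\alpha<\hat c_\alpha$ being homogeneous of the same degree on both sides; in the Sobolev-critical range, where $r=0$ and $a-1=2/(N-2)$, one has to verify that the power of $\Lambda$ appearing in $s_{*}$ cancels the power of $\max\{\mu_1,\mu_2\}+\beta^{+}$ hidden in \eqref{eq:assnice}, which works precisely because $\Lambda$ and $\max\{\mu_1,\mu_2\}+\beta^{+}$ have the same homogeneity in $(\mu_1,\mu_2,\beta^{+})$. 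A secondary point is that for $\beta<0$ the eigenfunction competitors must be supported in disjoint subsets of $\Omega$, which is why a purely geometric constant — a first eigenvalue of two disjoint subdomains of $\Omega$ — enters the definition of $R$.
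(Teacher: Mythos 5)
Your proposal is correct, and the two halves of it compare differently with the paper's proof. For the sufficient condition \eqref{eq:assnice} you follow essentially the paper's own route (Theorem \ref{thm:existence_bad_cond} combined with Subsection \ref{sec:explicit_est}): a Gagliardo--Nirenberg lower bound for $\hat c_\alpha$ of the form $\tfrac12 s-c_1Ks^a$ (Lemma \ref{lem:hatc_from_below}), an eigenfunction competitor giving $c_\alpha\le\tfrac12\bar\lambda(\rho_1+\rho_2)$ (Lemma \ref{lem:estimate_c}), optimization in $\alpha$ at the critical point $s_*$ (the paper's $\bar\alpha=\tfrac{a}{a-1}\lambda_j(\Omega)$), and the observation that this $\alpha$ automatically satisfies \eqref{eq:compact_intro} when $p=2^*-1$ because $a>1$. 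The differences are cosmetic: you split at $\beta\ge0$ instead of $\beta\ge-\sqrt{\mu_1\mu_2}$ and use first eigenfunctions of fixed disjoint subdomains where the paper uses $\varphi_2^{\pm}$ (hence $\lambda_2(\Omega)$, the optimal constant for disjointly supported pairs), and your $K$ overestimates the paper's $\dd(\rho_1,\rho_2)$ by a bounded factor; both effects only shrink $R$ relative to \eqref{eq:def_R}, which is harmless for the statement. One point to patch: since \eqref{eq:assnice} is non-strict, at the borderline your two bounds only give $c_\alpha\le\hat c_\alpha$; the paper closes this by noting that the Gagliardo--Nirenberg inequality is \emph{strict} on a bounded domain for nontrivial functions, so that the lower bound for $\hat c_\alpha$ is strict, and you should invoke the same fact. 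The star-shapedness half, by contrast, is genuinely different from and simpler than the paper's. The paper (Proposition \ref{prop_starshaped}) first extracts a minimizer $(\bar u_1,\bar u_2)$ attaining $c_\alpha$ on an inner sphere $\Ucal_{\bar\alpha}$ --- which in the critical case already requires the compactness of Proposition \ref{prop:compact_intro} --- and then uses its Euler--Lagrange equation to build a variation (Lemmas \ref{lem:starshap1} and \ref{lem:curvetta}) decreasing both the kinetic energy and $\Ecal$ after scaling, concluding via Lemma \ref{lem:c<hat_c}. You instead scale an arbitrary competitor $u^*\in\Bcal_\alpha\setminus\Ucal_\alpha$ with $\Ecal(u^*)<\hat c_\alpha$ and exploit the exact identity $\hat c_\alpha(t\rho_1,t\rho_2)=tQ_0-t^{(p+1)/2}\sup_{\Ucal_\alpha}P$, after which the sign analysis on $e$ closes the argument with the \emph{same} witnessing $\alpha$ (and \eqref{eq:compact_intro} is preserved since $\dd$ is mass-independent when $p=2^*-1$). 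This avoids both the attainment of $c_\alpha$ and the Lagrange-multiplier computation; what the paper's longer route buys is the extra information recorded in the remark after Proposition \ref{prop_starshaped} (the criterion involving the multiplier $\gamma$), which your argument does not reproduce.
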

%
%By the way,
%\[
%R(\Omega,N,p)=\frac{p+1}{2C_{N,p}}\frac{(a-1)^{(a-1)}}{a^a} \lambda_j(\Omega)^{-(a-1)}
%\]
%
\begin{remark}
When $N\ge3$ and $p=2^*-1$, explicit calculations show that $a=N/(N-2)$, $r=0$ and \eqref{eq:assnice}
rewrites as
\[
\rho_1+\rho_2 \le \left[\frac{R(\Omega,N,2^*-1)}{\max\{ \mu_1 ,\mu_2 \} +
\beta^+}\right]^{(N-2)/2}.
\]
\end{remark}
\begin{remark}\label{rem:uniform_beta2}
When $\beta\leq 0$, condition \eqref{eq:assnice} is independent from $\beta$ and reduces to:
\[
\left[\max\{ \mu_1 \rho_1^{2r},\mu_2 \rho_2^{2r}\}  \right]\cdot (\rho_1+\rho_2)^{a-1} \le R(\Omega,N,p),
\]
\end{remark}
\begin{remark}
As we mentioned before, when $p>1+4/N$ the functional $\Ecal$ is unbounded from below on $\Mcal$. We deduce that,
under the same assumptions of Theorem \ref{prop:supercritical}, $\Ecal$ has a mountain pass geometry on $\Mcal$ (see for instance \cite[Thm. 4.2, Ch. II]{Struwebook}).
In a standard way, this implies that
\begin{quote}
if $1+\dfrac{4}{N} < p < 2^*-1$ and $(\rho_1,\rho_2)$ satisfies \eqref{eq:assnice}, then $\left.\Ecal\right|_{\Mcal}$ has
two critical points: one local minimum and one mountain pass.
\end{quote}
We cannot obtain the same result for $p=2^*-1$, since our compactness result in
Proposition \ref{prop:compact_intro} holds only for minimizing sequences.
\end{remark}
\begin{remark}
It is natural to expect that the set $A$ is bounded in $\R^2_+$. Actually,
we know from \cite{ntvAnPDE} that this is the case, for the single equation, at least in the Sobolev
subcritical case. The proof of this fact should follow by a careful blow-up analysis based
on suitable pointwise a priori controls, along the lines of \cite[Section 4]{ntvAnPDE}, and will be the object of further investigation.
\end{remark}
Since the solutions we found in the $L^2$-supercritical cases are local
minima of the energy, it is natural to expect that they correspond to orbitally stable solitary waves. The proof of this fact requires some modification of the standard arguments
used for global minimizers. Notably, the lack of compactness of the embedding $H^1_0 \hookrightarrow L^{2^*}$ is an issue here, too. We have the following result.
\begin{theorem}[$L^2$--supercritical cases: stability]\label{thm:stab}
Let $1+4/N < p \le 2^*-1$ and $(\rho_1,\rho_2)\in A$. Let  $\bar\alpha\ge \lambda_1(\Omega)$ be such that
\[
c_{\bar\alpha}<\hat c_{\bar\alpha},
\qquad
\text{and $\bar\alpha$ satisfies \eqref{eq:compact_intro} if $p=2^*-1$}.
\]
Then the set of \emph{local} ground states
\begin{equation}\label{eq:GroundStates}
G_{\bar\alpha}:=\left\{ (u_1,u_2) \in H^1_0(\Omega;\C^2): \ (|u_1|,|u_2|)\in\Bcal_{\bar \alpha},\ \Ecal(u_1,u_2)=
c_{\bar\alpha} \right\},
\end{equation}
is (conditionally) orbitally stable.
\end{theorem}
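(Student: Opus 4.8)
The plan is to adapt the classical Cazenave--Lions argument for orbital stability of global minimizers to the present setting of \emph{local} minimizers constrained to lie in $\Bcal_{\bar\alpha}$, the main point being that the constraint set $\Bcal_{\bar\alpha}$ is invariant enough, and that the strict inequality $c_{\bar\alpha}<\hat c_{\bar\alpha}$ prevents minimizing sequences (or perturbed trajectories) from escaping to the boundary $\Ucal_{\bar\alpha}$. I would argue by contradiction: suppose stability fails, so there is $\eps_0>0$, a sequence of initial data $(\psi_1^n,\psi_2^n)\to G_{\bar\alpha}$ in $H^1_0(\Omega;\C^2)$, and times $t_n>0$ with $\dist_{H^1_0}\big((\Psi_1^n(t_n),\Psi_2^n(t_n)),G_{\bar\alpha}\big)\ge\eps_0$, where $(\Psi_1^n,\Psi_2^n)$ solves \eqref{eq:system_schro} with datum $(\psi_1^n,\psi_2^n)$. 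Writing $w^n:=(\Psi_1^n(t_n),\Psi_2^n(t_n))$, conservation of mass and energy (the \emph{conditional} part) gives $\Qcal(\Psi_i^n(t_n))=\Qcal(\psi_i^n)\to\rho_i$ and $\Ecal(w^n)=\Ecal(\psi_1^n,\psi_2^n)\to c_{\bar\alpha}$.

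The key step is to show that, for $n$ large, $(|\Psi_1^n(t_n)|,|\Psi_2^n(t_n)|)$ stays in $\Bcal_{\bar\alpha}$ (after a harmless rescaling of the masses back to exactly $(\rho_1,\rho_2)$), so that $w^n$ is essentially a minimizing sequence for $c_{\bar\alpha}$. Here the strict gap $c_{\bar\alpha}<\hat c_{\bar\alpha}$ does the work: since $\Ecal$ is continuous and the trajectory $t\mapsto\big(|\Psi_1^n(t)|,|\Psi_2^n(t)|\big)$ moves continuously in $H^1_0$ while the Dirichlet energy $\int|\nabla\Psi_1^n|^2+|\nabla\Psi_2^n|^2$ varies continuously, the trajectory cannot cross $\Ucal_{\bar\alpha}$ without the energy exceeding $\hat c_{\bar\alpha}-\delta$ for some fixed $\delta>0$; but the energy is conserved and converges to $c_{\bar\alpha}<\hat c_{\bar\alpha}-\delta/2$, a contradiction once $n$ is large (the initial data lie near $G_{\bar\alpha}\subset\Bcal_{\bar\alpha}$, in particular in the interior region where the Dirichlet energy is $<(\rho_1+\rho_2)\bar\alpha$, or on $\Ucal_{\bar\alpha}$ but with energy $c_{\bar\alpha}$, which forces it to stay on the low-energy side). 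Hence $w^n$, suitably renormalized, is a minimizing sequence for $c_{\bar\alpha}$ on $\Bcal_{\bar\alpha}$.

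Then I would invoke the compactness of minimizing sequences: in the Sobolev-subcritical range (H3) weak lower semicontinuity of $\Ecal|_{\Mcal}$ together with the boundedness of $\Bcal_{\bar\alpha}$ gives, up to a subsequence, strong $H^1_0$ convergence of $(|\Psi_1^n(t_n)|,|\Psi_2^n(t_n)|)$ to a minimizer; in the Sobolev-critical case (H4), this is exactly the content of Proposition \ref{prop:compact_intro}, whose hypothesis \eqref{eq:compact_intro} is guaranteed by the assumption that $\bar\alpha$ satisfies it. A standard argument recovering the complex phases (using that $|\Psi_i^n(t_n)|\to |u_i|$ strongly, $\|\Psi_i^n(t_n)\|_{L^2}$ is fixed, and the weak $H^1_0$ limit of $\Psi_i^n(t_n)$ has modulus $|u_i|$, forcing strong convergence via the $L^2$-norm of the gradient) then yields $w^n\to w\in G_{\bar\alpha}$ strongly in $H^1_0(\Omega;\C^2)$, contradicting $\dist_{H^1_0}(w^n,G_{\bar\alpha})\ge\eps_0$.

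The main obstacle is the second step, i.e. the \emph{trapping} of the trajectory inside $\Bcal_{\bar\alpha}$: unlike the global-minimizer case, where any point with energy close to $\inf_{\Mcal}\Ecal$ automatically lies in the relevant set, here one must rule out that the flow leaves $\Bcal_{\bar\alpha}$ through $\Ucal_{\bar\alpha}$ and re-enters, or settles at higher Dirichlet energy. This requires a careful continuity/connectedness argument on $t\mapsto \int|\nabla\Psi_i^n(t)|^2$ combined with the energy gap, and is complicated in case (H4) by the failure of weak lower semicontinuity: one cannot a priori exclude that a minimizing sequence concentrates and that $\Ecal$ drops below $c_{\bar\alpha}$, which is precisely why Proposition \ref{prop:compact_intro}, and hence the quantitative bound \eqref{eq:compact_intro}, is indispensable. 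A secondary technical nuisance is the renormalization of masses from $\Qcal(\psi_i^n)$ (only close to $\rho_i$) back to exactly $\rho_i$, which must be shown to perturb both $\Ecal$ and the Dirichlet energy by $o(1)$, uniformly, so as not to destroy the strict inequality $c_{\bar\alpha}<\hat c_{\bar\alpha}$.
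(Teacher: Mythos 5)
Your overall architecture matches the paper's: argue by contradiction, use conservation of mass and energy, trap the trajectory inside $\Bcal_{\bar\alpha}$ via the energy gap and a continuity argument on $t\mapsto\int|\nabla\Psi_{1}|^2+|\nabla\Psi_{2}|^2$, and then conclude by the compactness of (approximate) minimizing sequences, which in the Sobolev-critical case is supplied by Proposition \ref{prop:Sobcritconv} under \eqref{eq:compact_intro}. Your handling of the approximate masses and of the final compactness step is consistent with the paper's Lemmas \ref{lemma:c_c'} and \ref{lem:stability_compact}.

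However, there is a genuine gap in your trapping step. You claim that the trajectory ``cannot cross $\Ucal_{\bar\alpha}$ without the energy exceeding $\hat c_{\bar\alpha}-\delta$,'' using $\hat c_{\bar\alpha}$ as the barrier. But $\hat c_{\bar\alpha}$ is the infimum of $\Ecal$ over \emph{real-valued} pairs on $\Ucal_{\bar\alpha}$, whereas the trajectory is complex-valued. The quantity that varies continuously along the flow is the complex Dirichlet energy $\int|\nabla\Psi_1|^2+|\nabla\Psi_2|^2$, and at a time where it equals $(\rho_1+\rho_2)\bar\alpha$ the diamagnetic inequality only gives $\int|\nabla|\Psi_1||^2+|\nabla|\Psi_2||^2\le(\rho_1+\rho_2)\bar\alpha$: the moduli may lie strictly inside $\Bcal_{\bar\alpha}$, so the only immediate lower bound on $\Ecal$ at the crossing time is $c_{\bar\alpha}$ itself, and no contradiction follows. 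What is actually needed is the strict inequality $c_{\bar\alpha}<\tilde c_{\bar\alpha}$, where $\tilde c_{\bar\alpha}$ is the infimum of $\Ecal$ over complex pairs with full Dirichlet norm $(\rho_1+\rho_2)\bar\alpha$ and masses $\rho_i$; a priori one only knows $\tilde c_{\bar\alpha}\le\hat c_{\bar\alpha}$. The paper devotes Lemma \ref{lem:c_tilde} to proving $c_{\bar\alpha}<\tilde c_{\bar\alpha}$, and its proof is not a soft continuity argument: one must apply the compactness result simultaneously to an almost-minimizing complex sequence on the boundary and to the sequence of its moduli, use the diamagnetic inequality to show that the defect $\int|\nabla v_{i,n}|^2-\int|\nabla|v_{i,n}||^2$ tends to zero, and thereby produce a real minimizer lying on $\Ucal_{\bar\alpha}$, contradicting $c_{\bar\alpha}<\hat c_{\bar\alpha}$. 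Without this lemma (or an equivalent substitute) your barrier argument does not close, and this is precisely where the hypothesis $c_{\bar\alpha}<\hat c_{\bar\alpha}$ is converted into a usable statement about the complex-valued flow.
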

As we noticed, in the Sobolev critical case our results are new also for the single equation. In particular, choosing $\rho_2=0$, $\rho_1=\rho$, $\mu_1=\mu$, we have the following direct consequence.
\begin{theorem}
Let $\mu>0$. If
\[
0<\rho\le \left[\frac{R(\Omega,N,2^*-1)}{\mu}\right]^{(N-2)/2}
\]
then the problem
\[
\begin{cases}
-\Delta u + \omega u=\mu u|u|^{2^*-2}\\
\int_\Omega u^2=\rho,\quad u\in H^1_0(\Omega)
\end{cases}
\]
admits a positive solution $u$, for a suitable $\omega\in (-\lambda_1(\Omega),0)$, which is a local minimizer of the associated energy. Moreover, the corresponding set of local ground states is (conditionally) orbitally stable.
\end{theorem}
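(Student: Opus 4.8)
The plan is to deduce this statement as a direct specialization of Theorems~\ref{prop:supercritical} and~\ref{thm:stab} to the scalar case obtained by setting $\rho_2=0$, $\rho_1=\rho$, $\mu_1=\mu$ (and, say, $\mu_2=1$, $\beta=0$, which makes the second component decouple trivially). First I would check that the hypothesis is exactly the one guaranteeing $(\rho,0)\in A$. Indeed, for $p=2^*-1$ we have $r=0$, so $\mu_1\rho_1^{2r}=\mu$, $\mu_2\rho_2^{2r}=1\cdot 0^{0}$ must be handled by continuity (or one simply notes that on the slice $\rho_2=0$ the relevant quantity is $\mu$), and $\beta^+\rho_1^r\rho_2^r=0$; since $a=N/(N-2)$, condition~\eqref{eq:assnice} becomes $\mu\cdot\rho^{a-1}=\mu\,\rho^{2/(N-2)}\le R(\Omega,N,2^*-1)$, i.e.\ $\rho\le (R/\mu)^{(N-2)/2}$, which is the assumed bound. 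Hence Theorem~\ref{prop:supercritical} yields $(\rho,0)\in A$, so by definition of $A$ there is $\bar\alpha\ge\lambda_1(\Omega)$ with $c_{\bar\alpha}<\hat c_{\bar\alpha}$ and $\bar\alpha$ satisfying~\eqref{eq:compact_intro}.

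Next I would extract the solution. By Proposition~\ref{prop:compact_intro} the level $c_{\bar\alpha}$ is achieved; as remarked after~\eqref{eq:defA}, when $(\rho_1,\rho_2)\in A$ the minimizer can be chosen non-negative and solves~\eqref{eq:system_elliptic}. With $\rho_2=0$ one component vanishes (this needs the short remark that the minimizer of the full two-component energy restricted to $\rho_2=0$ must have $u_2\equiv 0$, since $\mathcal M_{\rho,0}$ forces $\int u_2^2=0$), so $u:=u_1\ge 0$ satisfies $-\Delta u+\omega u=\mu u|u|^{2^*-2}$, $\int_\Omega u^2=\rho$, $u\in H^1_0(\Omega)$, with $\omega=\omega_1$ a Lagrange multiplier, and $u$ is a local minimizer of $u\mapsto\mathcal E(u,0)$ on $\mathcal M_{\rho,0}$, i.e.\ of the scalar energy $\frac12\int|\nabla u|^2-\frac{1}{2^*}\int\mu|u|^{2^*}$ subject to $\int u^2=\rho$.

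Then I would upgrade $u\ge 0$ to $u>0$ and pin down the sign and location of $\omega$. Strong positivity follows from the strong maximum principle applied to $-\Delta u=(\mu|u|^{2^*-2}-\omega)u$, using that $u$ is a nontrivial $H^1_0$ weak solution (nontrivial since $\rho>0$) and elliptic regularity to give enough smoothness. For $\omega>-\lambda_1(\Omega)$: testing the equation against the first eigenfunction $\varphi_1>0$ gives $(\lambda_1(\Omega)+\omega)\int u\varphi_1=\mu\int u^{2^*-1}\varphi_1>0$, hence $\omega>-\lambda_1(\Omega)$. For $\omega<0$: test the equation against $u$ itself to get $\int|\nabla u|^2+\omega\rho=\mu\int u^{2^*}$, and combine with the fact that a local minimizer on $\mathcal M_{\rho,0}$ lies in $\mathcal B_{\bar\alpha}$ with $c_{\bar\alpha}<0$ (which one gets from $\hat c_{\bar\alpha}>c_{\bar\alpha}$ together with the elementary estimate that $\mathcal E$ is negative somewhere inside $\mathcal B_{\bar\alpha}$, cf.\ Lemma~\ref{lemma:geometry_mp}); then $\mathcal E(u,0)=\frac12\int|\nabla u|^2-\frac{1}{2^*}\mu\int u^{2^*}<0$ combined with the Pohozaev/Nehari-type identity $\int|\nabla u|^2+\omega\rho=\mu\int u^{2^*}$ forces $\omega\rho<\mu\int u^{2^*}-\int|\nabla u|^2<0$, hence $\omega<0$. (I would double-check the precise chain of inequalities here; this is the one spot where the bookkeeping must be done carefully, using $\mathcal E(u,0)=c_{\bar\alpha}<0$.)

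Finally, orbital stability: Theorem~\ref{thm:stab} with the same $\bar\alpha$ gives that $G_{\bar\alpha}$ is (conditionally) orbitally stable, and $G_{\bar\alpha}$ restricted to the slice $\rho_2=0$ is precisely the set of scalar local ground states $\{u\in H^1_0(\Omega;\mathbb C): |u|\in\mathcal B_{\bar\alpha}\cap\{u_2=0\},\ \mathcal E(u,0)=c_{\bar\alpha}\}$; the orbital stability statement restricts to this invariant set because the evolution~\eqref{eq:system_schro} with second datum $\equiv 0$ keeps $\Psi_2\equiv 0$. I expect the main obstacle to be purely expository rather than mathematical: making sure the degenerate substitutions ($\rho_2=0$, and $0^0$-type expressions in~\eqref{eq:assnice} when $r=0$) are justified cleanly, and verifying the sign of $\omega$ lands strictly inside $(-\lambda_1(\Omega),0)$ — everything else is an immediate reading-off of the general theorems.
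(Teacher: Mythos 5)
Your overall route is exactly the paper's: the theorem is stated there as a direct consequence of Theorems \ref{prop:supercritical} and \ref{thm:stab} with $\rho_2=0$, $\rho_1=\rho$, $\mu_1=\mu$, and no separate proof is given. Two small bookkeeping points first: with $r=0$ the quantity $\mu_2\rho_2^{2r}$ equals $\mu_2$ (not $0$), so condition \eqref{eq:assnice} reads $\max\{\mu,\mu_2\}\,\rho^{2/(N-2)}\le R$; your choice $\mu_2=1$ therefore only reproduces the stated hypothesis when $\mu\ge 1$. Take $\mu_2=\mu$ (or any $\mu_2\le\mu$) and this issue disappears. The reduction of the two-component minimizer to a scalar one on the slice $\rho_2=0$, the test against $\varphi_1$ giving $\omega>-\lambda_1(\Omega)$, and the restriction of the orbital stability statement are all fine.

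The genuine gap is your argument for $\omega<0$. It rests on the claim that $c_{\bar\alpha}<0$, justified by saying that $\Ecal$ is negative somewhere inside $\Bcal_{\bar\alpha}$ ``cf.\ Lemma \ref{lemma:geometry_mp}''. This is false: the functions constructed in Lemma \ref{lemma:geometry_mp} have arbitrarily large gradient norm and lie far outside $\Bcal_{\bar\alpha}$. In fact, under \eqref{eq:mainassL} and with $\bar\alpha=\frac{a}{a-1}\lambda_j(\Omega)$, Lemma \ref{lem:hatc_from_below} gives, for every $\alpha\in[\lambda_1(\Omega),\bar\alpha]$,
\[
\hat c_\alpha>\tfrac12\rho\alpha\left(1-\dd\rho^{a-1}\alpha^{a-1}\right)\ge \tfrac12\rho\alpha\left(1-\tfrac1a\right)>0,
\]
so $c_{\bar\alpha}=\inf_{\lambda_1(\Omega)\le\alpha\le\bar\alpha}\hat c_\alpha$ is strictly \emph{positive}, and your chain of inequalities collapses. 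A correct argument is available from the quantities you already have in hand: testing the equation with $u$ gives $\omega\rho=\mu\|u\|_{2^*}^{2^*}-\|\nabla u\|_2^2$; the Sobolev inequality is strict on $H^1_0(\Omega)$ for nontrivial $u$ (as recalled after \eqref{eq:Kwong}), so $\mu\|u\|_{2^*}^{2^*}<\mu S_N\|\nabla u\|_2^{2^*-2}\,\|\nabla u\|_2^{2}$; and plugging the explicit values of $R$ from \eqref{eq:def_R} and $\bar\alpha=\frac{N}{2}\lambda_1(\Omega)$ into $\|\nabla u\|_2^2\le\rho\bar\alpha$ yields exactly $\|\nabla u\|_2^2\le(\mu S_N)^{-(N-2)/2}$, i.e.\ $\mu S_N\|\nabla u\|_2^{2^*-2}\le 1$, whence $\omega<0$. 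With this replacement (and $\mu_2=\mu$) the proof is complete.
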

To conclude, we remark that all the assumptions in our results involve $\beta^+$, the positive part of $\beta$. As a consequence, all our estimates are uniform in $\beta<0$. To summarize, recalling Theorems
\ref{prop:subcritical} and  \ref{prop:supercritical} (see also and Remarks \ref{rem:uniform_beta1} and \ref{rem:uniform_beta2}), we prove existence of solutions whenever $\beta<0$ and $(\rho_1,\rho_2)$ satisfy
\begin{equation}\label{eq:uniform_beta}
\begin{cases}
\rho_1,\rho_2>0 & \quad \text{ if }1<p<1+4/N, \\ \smallbreak
0<\mu_1\rho_1^\frac{2}{N},\mu_2\rho_2^\frac{2}{N}<\frac{N+2}{NC_{N}}  &\quad \text{ if } p=1+\frac{4}{N}\\ \smallbreak
\max\{ \mu_1 \rho_1^{2r},\mu_2 \rho_2^{2r}\}  \cdot (\rho_1+\rho_2)^{a-1} \le R(\Omega,N,p) &\quad  \text{ if }1+\frac{4}{N}<p\leq 2^*-1.
\end{cases}
\end{equation}
This allows to exploit results in \cite{MR2599456,SZ15,STTZ} in order to perform a segregation analysis as $\beta\to-\infty$. %Such analysis is contained in Section \ref{sec:segregation}. We just state here the following consequence of our arguments.
\begin{theorem}\label{thm_beta-infty}
Let $\Omega\subset\R^N$ be a smooth bounded domain, $\mu_1,\mu_2>0$, $\beta< 0$ and $(\rho_1,\rho_2)$ be such that \eqref{eq:uniform_beta} holds. Let also $(u_{1,\beta},u_{2,\beta})$ be a corresponding ground state of \eqref{eq:system_elliptic}, with multipliers $(\omega_{1,\beta},\omega_{2,\beta})$ and such that $u_{1,\beta},u_{2,\beta}>0$ in $\Omega$. Then $\{(u_{1,\beta},u_{2,\beta})\}_{\beta<0}$ is uniformly bounded in $C^{0,\alpha}(\overline \Omega)$ and, up to subsequences, $(u_{1,\beta},u_{2,\beta})\to(w^+,w^-)$ as $\beta\to-\infty$, in $C^{0,\alpha}(\overline{\Omega})\cap H^1_0(\Omega)$, where  $w\in C^{0,1}(\overline \Omega)$ solves
\[
\begin{cases}
-\Delta w + \omega_1 w^+-\omega_2 w^- = \mu_1 (w^+)^p-\mu_2 (w^-)^p & \text{in } \Omega,\\
\int_\Omega (w^+)^2=\rho_1,\ \int_\Omega (w^-)^2=\rho_2,\quad w\in H^1_0(\Omega),
\end{cases}
\]
for $\omega_i:=\lim_{\beta\to -\infty}\omega_{i,\beta}$.
\end{theorem}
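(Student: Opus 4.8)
### Proof proposal for Theorem \ref{thm_beta-infty}

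The plan is to study the limit $\beta\to-\infty$ of ground states by exploiting three ingredients: (a) the uniform-in-$\beta$ existence guaranteed by \eqref{eq:uniform_beta} together with Theorems \ref{prop:subcritical} and \ref{prop:supercritical}; (b) uniform a priori bounds (energy, mass, and then $L^\infty$ or Hölder) on the family $\{(u_{1,\beta},u_{2,\beta})\}$; and (c) the known segregation theory for systems with large negative coupling, as developed in \cite{MR2599456,SZ15,STTZ}, which provides the key $C^{0,\alpha}$ compactness and the limiting equation for $w=w^+-w^-$. First I would record that, under \eqref{eq:uniform_beta}, for every $\beta<0$ a positive ground state exists; since all the structural constants in our assumptions depend only on $\beta^+=0$, the constraint set and the admissible masses are fixed along the family. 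From the variational characterization (global minimizer in cases (H1)-(H2), local minimizer on $\Bcal_{\bar\alpha}$ in case (H3)-(H4)) one gets that $\Ecal(u_{1,\beta},u_{2,\beta})$ is bounded above uniformly in $\beta$: indeed one may use a fixed competitor, e.g. suitably normalized first eigenfunctions with disjoint (or nearly disjoint) supports, on which the coupling term $2\beta\int |u_{1}|^{(p+1)/2}|u_2|^{(p+1)/2}$ is nonpositive, hence the energy is bounded by a $\beta$-independent quantity. Combined with the mass constraints $\int u_{i,\beta}^2=\rho_i$ and the Gagliardo–Nirenberg inequality, this yields a uniform $H^1_0$ bound on $(u_{1,\beta},u_{2,\beta})$, and in particular a uniform bound on $\beta^-\int |u_{1,\beta}|^{(p+1)/2}|u_{2,\beta}|^{(p+1)/2}$, i.e. the coupling energy vanishes at rate $O(1/|\beta|)$.

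Next I would control the Lagrange multipliers $(\omega_{1,\beta},\omega_{2,\beta})$. Testing the $i$-th equation of \eqref{eq:system_elliptic} with $u_{i,\beta}$ gives
\begin{equation}
\omega_{i,\beta}\rho_i = \mu_i\int_\Omega u_{i,\beta}^{p+1} + \beta\int_\Omega |u_{i,\beta}|^{(p+1)/2}|u_{3-i,\beta}|^{(p+1)/2} - \int_\Omega |\nabla u_{i,\beta}|^2,
\end{equation}
and since $\beta<0$ the coupling term is $\le 0$; together with the uniform $H^1_0$ and $L^{p+1}$ bounds this bounds $\omega_{i,\beta}$ from above, while testing against the first eigenfunction $\varphi_1>0$ and using positivity of $u_{i,\beta}$ gives a lower bound $\omega_{i,\beta}>-\lambda_1(\Omega)$ (the argument is the standard one: $-\Delta u_{i,\beta}+\omega_{i,\beta}u_{i,\beta}\ge 0$ with $u_{i,\beta}>0$ forces $\omega_{i,\beta}+\lambda_1(\Omega)>0$ by comparison). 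Hence $(\omega_{1,\beta},\omega_{2,\beta})$ is bounded, and up to a subsequence $\omega_{i,\beta}\to\omega_i$. With the uniform $H^1_0$ bound in hand, I would then invoke the uniform Hölder estimates for competitive systems with diverging interaction from \cite{MR2599456} (and the refinements in \cite{SZ15,STTZ}): the right-hand sides $\mu_i u_{i,\beta}|u_{i,\beta}|^{p-1}$ are uniformly bounded in $L^q$ for a suitable $q$ (using the $H^1_0$ bound and Sobolev, even in the Sobolev-critical case after noting $p=2^*-1$ and the $L^{2^*}$ bound), the interaction terms have a sign, and the boundary condition is homogeneous Dirichlet on a smooth domain; this yields a uniform bound in $C^{0,\alpha}(\overline\Omega)$ for some $\alpha\in(0,1)$, and along a subsequence $u_{i,\beta}\to v_i$ in $C^{0,\alpha'}(\overline\Omega)$ for $\alpha'<\alpha$, with $v_1 v_2\equiv 0$ (segregation), $v_i\ge0$, $v_i\in H^1_0(\Omega)$, $\int v_i^2=\rho_i$.

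Then I would identify $w:=v_1-v_2$, so that $w^+=v_1$, $w^-=v_2$, the mass conditions become $\int(w^+)^2=\rho_1$, $\int(w^-)^2=\rho_2$, and I would pass to the limit in the weak formulation: for $\phi\in C_c^\infty(\Omega)$, subtract the two equations tested against $\phi$ to get
\begin{equation}
\int_\Omega \nabla(u_{1,\beta}-u_{2,\beta})\cdot\nabla\phi + \omega_{1,\beta}\int_\Omega u_{1,\beta}\phi - \omega_{2,\beta}\int_\Omega u_{2,\beta}\phi = \mu_1\int_\Omega u_{1,\beta}^p\phi - \mu_2\int_\Omega u_{2,\beta}^p\phi,
\end{equation}
where the interaction terms cancel by antisymmetry (they are $\beta u_{1,\beta}|u_{1,\beta}|^{(p-3)/2}|u_{2,\beta}|^{(p+1)/2}$ and $\beta u_{2,\beta}|u_{2,\beta}|^{(p-3)/2}|u_{1,\beta}|^{(p+1)/2}$, which have opposite sign in the difference); the $C^{0,\alpha}$ and $H^1$ convergences let us pass to the limit, giving $-\Delta w+\omega_1 w^+-\omega_2 w^-=\mu_1(w^+)^p-\mu_2(w^-)^p$ in $\Omega$. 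The Lipschitz regularity $w\in C^{0,1}(\overline\Omega)$ is the statement, proven in \cite{STTZ} (or \cite{SZ15}), that limiting profiles of strongly competing systems are Lipschitz continuous up to the boundary; I would cite it. The strong $H^1_0$ convergence (not just weak) follows from testing the difference of equations with $u_{1,\beta}-u_{2,\beta}-(w^+-w^-)$, using that the coupling terms again combine with a sign and the mass/energy control forces $\int|\nabla(u_{1,\beta}-u_{2,\beta})|^2\to\int|\nabla w|^2$; alternatively it is part of the cited convergence theorems. The main obstacle I anticipate is the Sobolev-critical case $p=2^*-1$: the a priori bounds of \cite{MR2599456} were developed under (sub)critical growth, so one must check that the uniform $H^1_0$ bound plus the sign of the interaction still yields uniform local $L^\infty$ bounds — this should follow from a Brezis–Kato / Moser iteration argument that is insensitive to the critical exponent once the $L^{2^*}$ norm is controlled and the competing terms are dropped, but it requires care near the free boundary $\{w=0\}$ and near $\partial\Omega$; this is where I would spend most of the effort, and where invoking the most recent segregation results (which do cover critical growth) is cleanest.
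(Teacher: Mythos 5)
Your overall strategy (uniform-in-$\beta$ existence, energy bound via a disjointly supported competitor, $H^1_0$ and multiplier bounds, then segregation theory for the H\"older compactness) matches the paper's proof in Section \ref{sec:segregation}. However, there is one genuine gap at the decisive step, plus a smaller flaw.

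The gap is your claim that, upon subtracting the two equations of \eqref{eq:system_elliptic}, ``the interaction terms cancel by antisymmetry''. They do not. For positive solutions the coupling terms are $\beta\, u_{1,\beta}^{(p-1)/2}u_{2,\beta}^{(p+1)/2}$ and $\beta\, u_{2,\beta}^{(p-1)/2}u_{1,\beta}^{(p+1)/2}$, and their difference is
\[
\beta\,(u_{1,\beta}u_{2,\beta})^{(p-1)/2}\,(u_{2,\beta}-u_{1,\beta}),
\]
which is not zero and has no global sign. As $\beta\to-\infty$ this is an indeterminate product ($|\beta|\to\infty$ against a quantity converging to $0$ by segregation), and showing that it vanishes in the distributional limit --- equivalently, that $w=w^+-w^-$ satisfies the exact limit \emph{equation} rather than merely a pair of differential inequalities --- is precisely the hard content of S.~Terracini's conjecture. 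The paper does not attempt to prove this; it invokes \cite[Theorem 1.2]{DWZ}, which yields the two opposite inequalities for $u_1-u_2$ and $u_2-u_1$, and hence the equation for $w$. Your proposed passage to the limit, and likewise your argument for strong $H^1_0$ convergence (``the coupling terms again combine with a sign''), rest on the false cancellation and would not go through as written. A secondary flaw: your lower bound $\omega_{i,\beta}>-\lambda_1(\Omega)$ via comparison with $\varphi_1$ assumes $-\Delta u_{i,\beta}+\omega_{i,\beta}u_{i,\beta}\ge 0$, but the right-hand side of each single equation contains the negative term $\beta u_{i,\beta}^{(p-1)/2}u_{j,\beta}^{(p+1)/2}$, so this inequality fails for $\beta\ll 0$. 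The fix is already in your own write-up: the energy bound gives $|\beta|\int_\Omega (u_{1,\beta}u_{2,\beta})^{(p+1)/2}\le C$ uniformly, so testing the $i$-th equation with $u_{i,\beta}$ bounds $|\omega_{i,\beta}|$ from both sides at once, which is how the paper proceeds in Lemma \ref{lemma_betainfty_aux}. Your remaining steps (uniform $L^\infty$ via Brezis--Kato--Moser, then the uniform $C^{0,\alpha}$ bounds and convergence from \cite{MR2599456,SZ15,STTZ}, valid also at the Sobolev-critical exponent) are consistent with the paper.
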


\medbreak

The paper is structured as follows. In the next subsection we make some preliminary remarks and definitions which will be used in the text; in particular, we recall some facts about the Gagliardo-Nirenberg inequality and deduce some direct consequences.

Section \ref{sec:subcrit_crit} is devoted to the existence results under (H1)-(H2), i.e., to the proof of Theorem \ref{prop:subcritical}-a) as well as to a detailed explanation of condition \eqref{N=2condition2} (which leads to Fig. \ref{fig:N=2}).

The existence results under (H3)-(H4) (Theorem \ref{prop:supercritical}) are proved in Section \ref{sec:3}. Therein, we provide lower estimates for $\hat c_\alpha$ (see Subsection \ref{sec:hat_c_below}), we prove a slightly more general version of Proposition \ref{prop:compact_intro} (Subsection \ref{sec:calpha_achiev}), while in Subsection \ref{sec:abstractexist}  we introduce an abstract criterium  that guarantees that $(\rho_1,\rho_2)\in A$. Finally, Subsections \ref{sec:star} and \ref{sec:explicit_est} contain respectively the proofs of the  qualitative properties of $A$ and the deduction of condition \eqref{eq:assnice}.

Section \ref{sec:stab} is concerned with the proof of the stability results, namely the proof of  Theorems \ref{prop:subcritical}-b) and \ref{thm:stab}. Finally, Theorem \ref{thm_beta-infty} is proved in Section \ref{sec:segregation}.

\subsection{Notations and Preliminaries}\label{eq:preliminaries} Throughout the paper we denote by $\lambda_1(\Omega)$ the first eigenvalue of the Dirichlet Laplacian in $\Omega$, and by $\varphi_1$ the corresponding first eigenfunction, which we assume  normalized in $L^2(\Omega)$ and positive in $\Omega$.

We use the following $L^q(\Omega)$ ($1\le q<\infty$) and $H^1_0(\Omega)$--norms:
\[
\|u\|_{L^q(\Omega)}^q:=\int_\Omega |u|^q,\qquad  \|u\|_{H^1_0(\Omega)}^2:= \int_\Omega |\nabla u|^2.
\]
Where there is no risk of confusion, we will denote $\|\cdot\|_{L^q(\Omega)}$ simply by $\|\cdot \|_q$.

The Gagliardo-Nirenberg inequality asserts that there exists a constant $C_{N,p}$ such that
\begin{equation}\label{eq:GN}
\begin{split}
\|v\|_{L^{p+1}(\R^N)}^{p+1}&\leq C_{N,p} \|\nabla v\|_{L^2(\R^N)}^{N(p-1)/2}\|v\|_{L^2(\R^N)}^{p+1-{N(p-1)/2}}\\
&= C_{N,p} \|\nabla v\|_{L^2(\R^N)}^{2a}\|v\|_{L^2(\R^N)}^{4r},\qquad \forall v\in H^1(\R^N),
\end{split}
\end{equation}
where the exponents $a$ and $r$ were defined in \eqref{eq:newexponent}. We remark that this inequality holds also in $H^1_0(\Omega)$, for any bounded domain $\Omega$,
with the same constant $C_{N,p}$.
It is proved in \cite{Weinstein1983} that
\begin{equation}\label{eq:CN4}
C_{N,p}:=\inf_{v\in H^1(\R^N)\setminus\{0\}} \frac{\|v\|_{L^{p+1}(\R^N)}^{p+1}}{\|\nabla v\|_{L^2(\R^N)}^{2a}  \|v\|_{L^2(\R^N)}^{4r}    } = \frac{\|Z\|_{L^{p+1}(\R^N)}^{p+1}}{\|\nabla Z\|_{L^2(\R^N)}^{2a}  \|Z\|_{L^2(\R^N)}^{4r}    },
\end{equation}
where $Z$ is, up to translations, the unique (see \cite{Kwong1989}) positive solution of
\begin{equation}\label{eq:Kwong}
-\Delta Z+Z=Z^p, \qquad Z\in H^1(\R^N).
\end{equation}
In particular, the inequality on $H^1_0(\Omega)$ is strict unless $v$ is trivial.
In the special case $p=1+{4/N}$ we  denote
\begin{equation}\label{eq:def_CN}
C_N:=C_{N,1+{4/N}},
\end{equation}
while for $p={(N+2)/(N-2)}$ and $N\geq 3$,
\begin{equation}\label{eq:Sobolev_constant}
S_N:=C_{N,(N+2)/(N-2)}.
\end{equation}
Observe that $S_N$ is just the best Sobolev constant of the embedding $\mathcal{D}^{1,2}(\R^N)\hookrightarrow L^{2N/(N-2)}(\R^N)$:
\[
\|v\|_{L^{2N/(N-2)}(\R^N)}^{2N/N-2}\leq S_N \|\nabla v\|_{L^2(\R^N)}^{2N/(N-2)},  \qquad \forall v\in \mathcal{D}^{1,2}(\R^N)
\]
For $(u_1,u_2)\in \Mcal$, defined as in \eqref{eq:defM}, using the H\"older and Gagliardo-Nirenberg inequalities (on bounded domains) we have
\begin{equation}\label{eq:importantestimateG}
\begin{split}
\int_\Omega \mu_1 |u_1|^{p+1} + &2\beta |u_1|^{(p+1)/2} |u_2|^{(p+1)/2} + \mu_2 |u_2|^{p+1}\\
&\le \mu_1\|u_1\|_{p+1}^{p+1}+\mu_2\|u_2\|_{p+1}^{p+1}+2\beta^+ \|u_1\|_{p+1}^{(p+1)/2}\|u_2\|_{p+1}^{(p+1)/2}\\
&< C_{N,p}\left(\mu_1 \rho_1^{2r}\|\nabla u_1\|_2^{2a} +\mu_2 \rho_2^{2r}\|\nabla u_2\|_2^{2a} +2\beta^+ \rho_1^r\rho_2^{r}  \|\nabla u_1\|_2^{a} \| \nabla u_2\|_2^{a}\right).
\end{split}
\end{equation}
where the exponents $a$ and $r$ are defined in \eqref{eq:newexponent}. As a consequence we have, for $(u_1,u_2)\in \Mcal$,
\begin{multline}\label{eq:importantestimate}
\Ecal(u_1,u_2)
			>
\frac{1}{2}(\|\nabla u_1\|_2^2 + \|\nabla u_2\|_2^2) \\
-\frac{C_{N,p}}{p+1}\left(\mu_1 \rho_1^{2r}\|\nabla u_1\|_2^{2a}
+\mu_2 \rho_2^{2r}\|\nabla u_2\|_2^{2a}
+2\beta^+ \rho_1^r\rho_2^{r}  \|\nabla u_1\|_2^{a} \| \nabla u_2\|_2^{a}\right).
\end{multline}
According to \eqref{eq:sign_a-1} and to the previous inequality, the $L^2$-critical value $p=1+4/N$ is the threshold for the coercivity of $\Ecal$ over $\Mcal$, as we shall see more in detail in the following.

%%%
%%SECTION SUBCRIT and CRIT

\section{The \texorpdfstring{$L^2$}{L\texttwosuperior}--subcritical and \texorpdfstring{$L^2$}{L\texttwosuperior}--critical cases}\label{sec:subcrit_crit}

In this section we deal with conditions (H1) and (H2), meaning that
\begin{equation}\label{H1}
1<p\leq 1+\frac{4}{N}.
\end{equation}
Recall the definition of $C_N$ in \eqref{eq:def_CN}.

\begin{proof}[Proof of Theorem \ref{prop:subcritical}-a)]
Let us show in the two cases that $\Ecal$ restricted to $\Mcal$ is coercive. Then, by the direct method of the calculus of variations, $\inf_{\Mcal}\Ecal$ is achieved by a couple $(u_1,u_2)$ (which belongs to $\Mcal$ because of the compact embedding $H^1_0(\Omega)\hookrightarrow L^2(\Omega)$). By the Lagrange multipliers rule, $(u_1,u_2)$ solves \eqref{eq:system_elliptic} for some $(\omega_1,\omega_2)\in\R^2$. By possibly taking $|u_i|$, we can suppose $u_i\geq0$ and, if $\rho_1,\rho_2>0$, the maximum principle provides $u_i>0$ (indeed, since $\Omega$ is Lipschitz, each $u_i$ is continuous up to the boundary).

If (H1) holds, then $0<{N(p-1)/2}<2$, so that, in \eqref{eq:importantestimate}, $a<1$; we immediately deduce that $\Ecal$ restricted to $\Mcal$ is coercive for every $\rho_1,\rho_2>0$.

In case we have (H2),  continuing from \eqref{eq:importantestimate} and since in this case $a=1$, $r={1/N}$, we have 
\begin{align}
\Ecal(u_1,u_2)	&> \frac{1}{2}(\|\nabla u_1\|_2^2 + \|\nabla u_2\|_2^2) \\
                         & \qquad -\frac{NC_N}{2(N+2)}\left(\mu_1 \rho_1^{2/N}  \|\nabla u_1\|_2^2 +\mu_2 \rho_2^{2/N} \|\nabla u_2\|_2^2+2\beta^+ (\rho_1 \rho_2)^{1/N}\|\nabla u_1\|_2\| \nabla u_2\|_2\right) \\
			&= \frac{1}{2}\|\nabla u_1\|_2^2\left(1-\frac{NC_N\mu_1\rho_1^{2/N}}{N+2}\right) + \frac{1}{2}\|\nabla u_2\|_2^2\left(1-\frac{NC_N\mu_2\rho_2^{2/N}}{N+2}\right)\\
			& \qquad -\frac{\beta^+ N C_N(\rho_1\rho_2)^{1/N}}{N+2}\|\nabla u_1\|_2\|\nabla u_2\|_2\\
			&= \frac{1}{2} \begin{bmatrix} \|\nabla u_1\|_2 & \|\nabla u_2\|_2 \end{bmatrix} \cdot A \cdot \begin{bmatrix} \|\nabla u_1\|_2 & \|\nabla u_2\|_2 \end{bmatrix}^T  \label{eq:L^2crit_energyestimate},
\end{align}
where
\[
A=\begin{bmatrix}
 1-\frac{NC_N\mu_1\rho_1^{2/N}}{N+2}  & -\frac{\beta^+ N C_N(\rho_1\rho_2)^{1/N}}{N+2} \\
-\frac{\beta^+ N C_N(\rho_1\rho_2)^{1/N}}{N+2} &  1-\frac{NC_N\mu_2\rho_2^{2/N}}{N+2}.
\end{bmatrix}
\]
If $A$ is positive definite then our result follows. Now $A$ is positive definite if and only if the following inequalities are simultaneously satisfied
\begin{align*}
&1-\frac{NC_N\mu_1\rho_1^{2/N}}{N+2}>0, \qquad 1-\frac{NC_N\mu_2\rho_2^{2/N}}{N+2}>0, \\
&\left(1-\frac{NC_N\mu_1\rho_1^{2/N}}{N+2}\right)\left(1-\frac{NC_N\mu_2\rho_2^{2/N}}{N+2}\right)-\left(\frac{N C_N}{N+2}\right)^2(\beta^+)^2 (\rho_1 \rho_2)^{2/N}>0,
\end{align*}
that is to say \eqref{N=2condition2} holds.
\end{proof}

\begin{remark}\label{rem:N=2}
In this remark we interpret in the $(\mu_1\rho_1^{2/N},\mu_2\rho_2^{2/N})$--plane  the condition \eqref{N=2condition2} (see Fig. \ref{fig:N=2} for a visualization of this remark). Let $\bar x=\mu_1\rho_1^{2/N}$, $\bar y=\mu_2\rho_2^{2/N}$ so that \eqref{N=2condition2} corresponds to $(\bar x,\bar y)\in C$, where
\[
C:=\left\{(x,y)\in \R^2:\ 0< x,y< \frac{N+2}{NC_{N}}\ \text{ and }\ x+y+\frac{NC_N}{N+2} \frac{(\beta^+)^2-\mu_1\mu_2}{\mu_1\mu_2}xy<\frac{N+2}{NC_{N}} \right\}
\]
For $\beta\leq 0$, the condition reduces to $(\bar x,
\bar y)$ lying in the square
\[
Q:=\left\{ (x,y)\in \R^2:\ 0< x,y< \frac{N+2}{NC_{N}}\right\}
\]
For $\beta=\sqrt{\mu_1\mu_2}$, we have a half-square:
\[
Q_1:=Q\cap \left\{(x,y)\in \R^2:\ x+y< \frac{N+2}{NC_{N}}\right\}.
\]
For $\beta>0$, $\beta\neq \sqrt{\mu_1\mu_2}$, the curve
\[
\left\{(x,y)\in \R^2:\ x+y+\frac{NC_N}{N+2} \frac{(\beta^+)^2-\mu_1\mu_2}{\mu_1\mu_2}xy=\frac{N+2}{NC_{N}}  \right\}
\]
is an hyperbola which contains the points $(0,\frac{N+2}{NC_{N}})$, $(\frac{N+2}{NC_{N}},0)$. This hyperbola is the graph of
\[
y=\left(\frac{N+2}{NC_{N}}-x\right)\left(1+\frac{NC_N}{N+2}\frac{(\beta^+)^2-\mu_1\mu_2}{\mu_1\mu_2} x\right)^{-1}
\]
or, equivalently,
\[
y=\frac{N+2}{NC_{N}}\left(-\frac{\mu_1\mu_2}{(\beta^+)^2-\mu_1\mu_2}+\frac{N+2}{NC_{N}} \frac{(\beta^+)^2}{(\beta^+)^2-\mu_1\mu_2}\left(\frac{N+2}{NC_{N}}+\frac{(\beta^+)^2-\mu_1\mu_2}{\mu_1\mu_2}x\right)^{-1}  \right)
\]
which has a vertical asymptote at $x= \frac{N+2}{NC_{N}}\frac{\mu_1\mu_2}{\mu_1\mu_2-(\beta^+)^2}$.
Thus, the set $C$ always contains the sides of the square $Ox^+\cap \overline{Q}$ and $Oy^+\cap \overline{Q}$. When $0<\beta<\sqrt{\mu_1\mu_2}$ it contains $Q_1$, and when $\beta>\sqrt{\mu_1\mu_2}$ it is contained in $Q_1$.
\end{remark}
\begin{remark}
When $\beta\leq 0$, the condition reads as
\[
\mu_1 \rho_1^{2/N}, \mu_2 \rho_2^{2/N}<\frac{N+2}{NC_{N}}.
\]
Going to \cite[p. 1833]{ntvAnPDE}  we see that, as a consequence of Pohozaev's identity:
\[
\frac{N+2}{NC_{N}}=\|Z\|_2^{2/N},
\]
with $Z$ defined in \eqref{eq:Kwong}. Therefore the condition is equivalent to
\[
\rho_1<\|Z\|_2 \mu_1^{-N/2},\qquad \rho_2<\|Z\|_2 \mu_2^{-N/2}.
\]
This is consistent with the results in \cite{FibichMerle2001,ntvAnPDE}, which correspond to the case $\beta=0$ in \eqref{eq:system_elliptic}.
\end{remark}

\section{The \texorpdfstring{$L^2$}{L\texttwosuperior}--supercritical and Sobolev--subcritical case. The Sobolev--critical case}\label{sec:3}

\subsection{Preliminaries}

Assume from now on that $p$ satisfies either (H3) or (H4), that is $p>1+4/N$, with $p\leq (N+2)/(N-2)$
if $N\geq 3$. Along this section we do not make any distinction between the Sobolev-critical and the
Sobolev-subcritical cases, unless otherwise specified.

In Proposition \ref{prop:subcritical} we proved that $\Ecal$ restricted to $\Mcal$ is coercive for any $\rho_1,\rho_2>0$ under (H1) or for $(\rho_1,\rho_2)$ satisfying \eqref{N=2condition2}  under (H2). Thus, solutions were found as global minimizers of $\Ecal|_\Mcal$.
In the $L^2$-supercritical case $p>1+4/N$ the previous approach cannot work, since $\Ecal$ restricted to $\Mcal$ is not coercive for every value of $(\rho_1,\rho_2)$, as we  show in the following lemma.
Notice that this was already suggested by equation \eqref{eq:importantestimate}, since now $a=N(p-1)/4>1$.

\begin{lemma}\label{lemma:geometry_mp}
Let $p>1+4/N$. Then there exists $(U_{1,k},U_{2,k})\in \Mcal$, with nonnegative components, such that, as $k\to \infty$,
\[
\|(U_{1,k},U_{2,k})\|_{H^1_0(\Omega)}\to +\infty \quad \text{ and }\qquad \Ecal(U_{1,k},U_{2,k})\to -\infty.
\]
\end{lemma}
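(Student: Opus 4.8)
The plan is to exhibit an explicit unbounded family in $\Mcal$ along which $\Ecal\to-\infty$, using the standard $L^2$-preserving dilation trick adapted to the bounded domain. Since $\Omega$ is bounded we cannot dilate a fixed profile freely inside $\Omega$; instead I would fix a single smooth profile and shrink its support while rescaling its height.

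\smallskip

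First I would pick a fixed function $w\in C^\infty_c(\Omega)$, $w\ge 0$, $w\not\equiv 0$, and (in the genuinely two-component case, $\rho_1,\rho_2>0$) a second function $v\in C^\infty_c(\Omega)$ with $v\ge 0$, $v\not\equiv 0$ and disjoint support from $w$, $\supp v\cap\supp w=\emptyset$ (if $\rho_2=0$ simply take the second component identically zero; the computation below is then carried out for the first component alone). For $k\in\N$ large, choose a point $x_0\in\Omega$ and a small ball so that the scaled bump $w_k(x):=k^{N/2}w(k(x-x_0))$ still has support in $\Omega$; then normalize, $U_{1,k}:=\sqrt{\rho_1}\,w_k/\|w_k\|_2=\sqrt{\rho_1}\,w_k/\|w\|_2$, so that $\|U_{1,k}\|_2^2=\rho_1$ by the scale invariance of the $L^2$ norm under $x\mapsto k^{N/2}w(kx)$. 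Do the analogous construction for $U_{2,k}$ from $v$ around a different point $x_1\ne x_0$, with the two supports kept disjoint for all large $k$. Then $(U_{1,k},U_{2,k})\in\Mcal$ with nonnegative components, and since the supports are disjoint the coupling term $\int_\Omega |U_{1,k}|^{(p+1)/2}|U_{2,k}|^{(p+1)/2}$ vanishes identically, which simplifies the energy.

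\smallskip

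Next, the key scaling computations: $\|\nabla U_{1,k}\|_2^2 = \rho_1\|w\|_2^{-2}\,k^{2}\|\nabla w\|_2^2$ grows like $k^2$, so $\|(U_{1,k},U_{2,k})\|_{H^1_0}^2\to+\infty$. For the nonlinear term, $\int_\Omega |U_{1,k}|^{p+1} = \rho_1^{(p+1)/2}\|w\|_2^{-(p+1)}\,k^{N(p-1)/2}\|w\|_{p+1}^{p+1}$, which grows like $k^{N(p-1)/2}$; likewise for the $v$-term. Hence
\[
\Ecal(U_{1,k},U_{2,k}) = \tfrac12\, c_1\, k^2 - \tfrac{1}{p+1}\, c_2\, k^{N(p-1)/2} + (\text{same with }U_{2,k}),
\]
with $c_1,c_2>0$ constants depending only on $w$ (resp.\ $v$), $\rho_i$, $\mu_i$, $N$, $p$. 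The hypothesis $p>1+4/N$ is exactly $N(p-1)/2>2$, so the negative term dominates and $\Ecal(U_{1,k},U_{2,k})\to-\infty$ as $k\to\infty$.

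\smallskip

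The only genuine subtlety — the part worth being careful about — is making sure the scaled bumps actually fit inside $\Omega$ for all large $k$ while keeping $\|U_{i,k}\|_2^2=\rho_i$ exactly; this is handled by choosing $\supp w\subset B_1(0)$ so that $\supp w_k\subset B_{1/k}(x_0)\subset\Omega$ for $k\ge k_0$, and the normalization is exact because the $L^2$ mass of $k^{N/2}w(k\,\cdot)$ is independent of $k$. Everything else is the elementary scaling bookkeeping above. (Alternatively, one can take $U_{2,k}\equiv\sqrt{\rho_2}\,\varphi_1/\|\varphi_1\|_2$ fixed when one only wants blow-up driven by the first component; but the disjoint-support choice gives the cleanest statement covering all $\rho_1,\rho_2\ge0$ with $\rho_1+\rho_2>0$, and the case $\rho_1=\rho_2=0$ is vacuous since $\Mcal=\{(0,0)\}$.)
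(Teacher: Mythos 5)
Your proposal is correct and follows essentially the same route as the paper: both concentrate $L^2$-normalized bumps $k^{N/2}\phi(k(x-x_i))$ at two distinct points of $\Omega$ so that the supports are disjoint for large $k$ (killing the coupling term), and both conclude from the scaling $\|\nabla U_{i,k}\|_2^2\sim k^2$ versus $\|U_{i,k}\|_{p+1}^{p+1}\sim k^{N(p-1)/2}$ together with the equivalence $p>1+4/N\iff N(p-1)/2>2$. The only cosmetic difference is that you use two distinct profiles and normalize a posteriori, whereas the paper uses a single profile with $\int\phi^2=1$; this changes nothing.
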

\begin{proof}
Let $\phi\in C^\infty_c(B_1)$ with $\phi>0$ in $B_1$ and $\int_{B_1} \phi^2=1$, and $x_1,x_2\in\Omega$ such that $x_1\neq x_2$. For $k\in \N$ and $i=1,2$, we define
\[
U_{i,k}(x)=\rho_i^{1/2} k^{N/2} \phi(k(x-x_i)), \qquad x\in \Omega.%\in B_\frac{1}{n}(x_i).
\]
For $k$ sufficiently large we have $\textrm{supp}(U_{i,k})\subset B_{1/k}(x_i)\subset\Omega$, $i=1,2$, and $\textrm{supp}(U_{1,k})\cap \textrm{supp}(U_{2,k})=\emptyset$. Furthermore,
\[
\int_\Omega U_{i,k}^2=\rho_i \int_{B_1} \phi^2 =\rho_i,
\]
so that $(U_{1,k},U_{2,k}) \in \Mcal$ for $k$ sufficiently large. We compute
\[
\|(U_{1,k},U_{2,k})\|_{H^1_0(\Omega)} = k \sqrt{\rho_1+\rho_2} \|\nabla\phi\|_{L^2(B_1)}
\to+\infty
\]
as $k\to+\infty$ and
\[
\Ecal(U_{1,n},U_{2,n})=
k^2 \frac{\rho_1+\rho_2}{2}\|\nabla \phi\|_{L^2(B_1)}^2
- k^{2a} \frac{\mu_1\rho_1^{(p+1)/2}+\mu_2\rho_2^{(p+1)/2}}{p+1} \|\phi\|_{L^{p+1}(B_1)}^{p+1}
\to -\infty
\]
as $k\to+\infty$, since $a>1$.
\end{proof}

\subsection{A basic estimate on \texorpdfstring{$\hat c_\alpha$}{hat c alpha}}\label{sec:hat_c_below}

In order to prove the existence of a solution of \eqref{eq:system_elliptic} under (H3) or (H4) for
certain values of $\rho_1,\rho_2$, we use a different approach than the one used in Section
\ref{sec:subcrit_crit}. Recall that, for $\alpha\geq \lambda_1(\Omega)$, $\Bcal_\alpha$ and
$\Ucal_\alpha$ are defined in \eqref{eq:Balpha}, while $c_\alpha$ and $\hat c_\alpha$ are
as in \eqref{eq:calpha}. Observe that $\Bcal_\alpha\neq \emptyset$, since it contains at least
$(\sqrt{\rho_1}\varphi_1,\sqrt{\rho_2}\varphi_1)$. Moreover
\[
c_{\lambda_1(\Omega)}=\hat c_{\lambda_1(\Omega)}=\Ecal(\sqrt{\rho_1}\varphi_1,\sqrt{\rho_2}\varphi_1).
\]

Recalling \eqref{eq:importantestimate} and using the identification $x=\|\nabla u_1\|_2$, $y=\|\nabla u_2\|_2$, we end up studying the function $\varphi:\R^2_+\to \R$ defined by
\begin{equation}\label{eq:vphi_def}
\Phi(x,y)=\frac{1}{2}(x^2+y^2)-\frac{C_{N,p}}{p+1}\left(\mu_1 \rho_1^{2r}x^{2a}
+\mu_2 \rho_2^{2r}y^{2a}
+2\beta^+ \rho_1^r\rho_2^{r}  x^{a} y^{a}\right)
\end{equation}
where now $a>1$. Indeed, by \eqref{eq:importantestimate} we obtain that
\begin{equation}\label{eq:Phi_ineq}
\Ecal(u_1,u_2)\geq \Phi \left(\|\nabla u_1\|_2,\|\nabla u_1\|_2\right)\qquad\text{ for every }(u_1,u_2)\in \Mcal.
\end{equation}
In particular, this allows to estimate $\hat c_\alpha$ from below. To do that, let us define the following subsets of $\R^2$:
\[
U_\alpha=\left\{(x,y)\in \R^2_+:\ x^2+y^2= (\rho_1+\rho_2)\alpha\right\},\quad V_\alpha=U_\alpha\cap \left\{x\geq \sqrt{\rho_1\lambda_1(\Omega)},\ y\geq \sqrt{\rho_2\lambda_1(\Omega)}\right\}.
\]
The set $U_\alpha$ is obtained from $\Ucal_\alpha$ through the identification $x=\|\nabla u_1\|_2$, $y=\|\nabla u_2\|_2$. The set $V_\alpha$ is motivated by the fact that, for $(u_1,u_2)\in \Mcal$,
$\|\nabla u_1\|^2\geq \rho_1\lambda_1(\Omega)$ and $\|\nabla u_2\|^2\geq \rho_2\lambda_1(\Omega)$.
With this notation, using \eqref{eq:Phi_ineq}, we obtain
\begin{equation}\label{eq:iniziostima}
\hat c_\alpha \ge \min_{(x,y)\in V_\alpha} \Phi(x,y) = \frac12 \alpha
-\frac{C_{N,p}}{p+1} \max_{(x,y)\in V_\alpha} \left(\mu_1 \rho_1^{2r}x^{2a}
+\mu_2 \rho_2^{2r}y^{2a}
+2\beta^+ \rho_1^r\rho_2^{r}  x^{a} y^{a}\right).
\end{equation}
Now, due to the limitations in the definition of $V_\alpha$, the last maximum can not be written
explicitly in terms of $\alpha$ (except for a few particular cases). For this reason, we prefer the
more rough estimate in which $V_\alpha$ is replaced with $U_\alpha$. This allows more readable
results, without modifying the qualitative structure of the estimates.
\begin{lemma}\label{lem:hatc_from_below}
Let
\begin{equation}\label{eq:defdd}
\dd=\dd(\rho_1,\rho_2):= \frac{2C_{N,p}}{p+1} \max_{t\in [0,\pi/2]}
\left(\mu_1 \rho_1^{2r}\cos^{2a}t
+\mu_2 \rho_2^{2r}\sin^{2a}t
+2\beta^+ \rho_1^r\rho_2^{r}  \cos^{a}t \sin^{a}t\right).
\end{equation}
Then, for every $\alpha > \lambda_1(\Omega)$,
\[
\hat c_\alpha > \frac12\left((\rho_1+\rho_2)\alpha - \dd(\rho_1,\rho_2)(\rho_1+\rho_2)^a \alpha^a \right).
\]
\end{lemma}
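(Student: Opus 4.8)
The plan is to obtain the estimate by minimizing the model function $\Phi$ over the quarter circle $U_\alpha=\{(x,y)\in\R^2_+:\ x^2+y^2=(\rho_1+\rho_2)\alpha\}$, and then to upgrade the resulting (non-strict) inequality to a strict one by exploiting that the Gagliardo--Nirenberg inequality on $H^1_0(\Omega)$ is strict for nontrivial functions. Throughout one assumes $\rho_1+\rho_2>0$ (otherwise $\Ucal_\alpha=\{(0,0)\}$ and both sides vanish). For any $(u_1,u_2)\in\Ucal_\alpha$ the pair $(\|\nabla u_1\|_2,\|\nabla u_2\|_2)$ lies on $U_\alpha$, so \eqref{eq:Phi_ineq} gives $\Ecal(u_1,u_2)\ge\Phi(\|\nabla u_1\|_2,\|\nabla u_2\|_2)\ge\min_{U_\alpha}\Phi$, hence $\hat c_\alpha\ge\min_{U_\alpha}\Phi$.

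Next I would compute $\min_{U_\alpha}\Phi$ explicitly. Writing $(x,y)=\sqrt{(\rho_1+\rho_2)\alpha}\,(\cos t,\sin t)$ with $t\in[0,\pi/2]$, and using that the nonlinear part of $\Phi$ is positively homogeneous of degree $2a$,
\begin{equation*}
\Phi(x,y)=\tfrac12(\rho_1+\rho_2)\alpha-\frac{C_{N,p}}{p+1}\bigl((\rho_1+\rho_2)\alpha\bigr)^{a}\Bigl(\mu_1\rho_1^{2r}\cos^{2a}t+\mu_2\rho_2^{2r}\sin^{2a}t+2\beta^{+}\rho_1^{r}\rho_2^{r}\cos^{a}t\,\sin^{a}t\Bigr).
\end{equation*}
Thus minimizing $\Phi$ over $U_\alpha$ amounts to maximizing the last factor over $t\in[0,\pi/2]$, and by the very definition \eqref{eq:defdd} of $\dd(\rho_1,\rho_2)$ this maximum equals $\tfrac{p+1}{2C_{N,p}}\dd(\rho_1,\rho_2)$. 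Therefore $\min_{U_\alpha}\Phi=\tfrac12\bigl((\rho_1+\rho_2)\alpha-\dd(\rho_1,\rho_2)(\rho_1+\rho_2)^{a}\alpha^{a}\bigr)$, which is precisely the right-hand side in the statement.

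Finally I would show that $\hat c_\alpha\ge\min_{U_\alpha}\Phi$ is in fact strict. Assume not, and pick a minimizing sequence $(u_1^{n},u_2^{n})\in\Ucal_\alpha$ for $\hat c_\alpha=\min_{U_\alpha}\Phi$. Since $\Ecal(u_1^{n},u_2^{n})\ge\Phi(\|\nabla u_1^{n}\|_2,\|\nabla u_2^{n}\|_2)\ge\min_{U_\alpha}\Phi$ with both extremes converging to $\min_{U_\alpha}\Phi$, the nonnegative defect $D_n:=\Ecal(u_1^{n},u_2^{n})-\Phi(\|\nabla u_1^{n}\|_2,\|\nabla u_2^{n}\|_2)$ tends to $0$; as $D_n$ dominates (a fixed multiple of) each of the nonnegative Gagliardo--Nirenberg defects $C_{N,p}\rho_i^{2r}\|\nabla u_i^{n}\|_2^{2a}-\|u_i^{n}\|_{p+1}^{p+1}$ — which is how \eqref{eq:importantestimate} was derived — both of the latter vanish in the limit. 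The gradient norms $\|\nabla u_i^{n}\|_2$ are bounded (their squares sum to $(\rho_1+\rho_2)\alpha$), so along a subsequence $\|\nabla u_i^{n}\|_2\to\ell_i$ with $\ell_1^2+\ell_2^2=(\rho_1+\rho_2)\alpha>0$; choosing $i_0$ with $\ell_{i_0}>0$ we have $\rho_{i_0}>0$, and along a further subsequence $u_{i_0}^{n}\rightharpoonup u_{i_0}$ in $H^1_0(\Omega)$ with $u_{i_0}^{n}\to u_{i_0}$ in $L^2(\Omega)$, so $\|u_{i_0}\|_2^2=\rho_{i_0}>0$ and $u_{i_0}\not\equiv0$. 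The vanishing of the $i_0$-th Gagliardo--Nirenberg defect, together with $\|\nabla u_{i_0}\|_2\le\ell_{i_0}$ and the convergence $\|u_{i_0}^{n}\|_{p+1}\to\|u_{i_0}\|_{p+1}$, then forces $u_{i_0}$ to be an extremal for the Gagliardo--Nirenberg inequality on $H^1_0(\Omega)$, contradicting its strictness. In the Sobolev--subcritical case the $L^{p+1}$-convergence is just the compact embedding $H^1_0(\Omega)\hookrightarrow L^{p+1}(\Omega)$; in the Sobolev--critical case $p=2^*-1$ compactness is lost and one must instead run a Brezis--Lieb splitting $u_{i_0}^{n}=u_{i_0}+w_n$ (with $w_n\rightharpoonup0$), closing the argument via the strict super-additivity of $t\mapsto t^{N/(N-2)}$ and the strictness of the Sobolev inequality on $H^1_0(\Omega)$. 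Making this last step rigorous in the critical regime is the only genuinely delicate point; the rest is a direct computation.
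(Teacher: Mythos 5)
Your proof is correct, and its first two paragraphs are exactly the paper's argument: the paper passes from $\hat c_\alpha\ge\min_{V_\alpha}\Phi$ in \eqref{eq:iniziostima} to the cruder bound over the full quarter circle $U_\alpha$, parametrizes it by $t\in[0,\pi/2]$, and reads off the definition \eqref{eq:defdd} of $\dd$, obtaining the right-hand side of the lemma as $\min_{U_\alpha}\Phi$. Where you genuinely add something is the third paragraph. The paper's own displayed chain \eqref{eq:iniziostima} is written with ``$\ge$'', and the strictness of the conclusion is tacitly inherited from the strict pointwise inequality \eqref{eq:importantestimate}; but a strict inequality valid for each individual $(u_1,u_2)\in\Ucal_\alpha$ does not by itself survive passage to the infimum defining $\hat c_\alpha$, and the strict form of the lemma is actually used downstream (in the proof of Theorem \ref{thm:existence_bad_cond}, to conclude $\hat c_{\lambda_j(\Omega)}<\hat c_{\bar\alpha}$). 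Your compactness argument fills this gap correctly: along a minimizing sequence the Gagliardo--Nirenberg defect of some component with $\rho_{i_0}>0$ would vanish while its mass stays fixed and its gradient stays bounded, and the compact embedding into $L^2$ then produces a nontrivial extremal of the Gagliardo--Nirenberg inequality on $H^1_0(\Omega)$, which cannot exist; in the critical case $p=2^*-1$ your Brezis--Lieb splitting combined with the strict superadditivity of $t\mapsto t^{2^*/2}$ and the non-attainment of the Sobolev constant on bounded domains is precisely the right mechanism (the dichotomy forces either $\nabla u_{i_0}=0$, impossible since $\|u_{i_0}\|_2^2=\rho_{i_0}>0$, or equality in the Sobolev inequality for a nontrivial $H^1_0$ function, also impossible). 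So your write-up is, if anything, more careful than the paper's on the one point where care is needed; the only outstanding task is to write out in full the critical-case computation you sketch, which does go through as indicated.
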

\begin{proof}
Since
\[
V_\alpha\subset U_\alpha = \left\{(\cos t,\sin t)\sqrt{(\rho_1+\rho_2) \alpha}:t\in[0,\pi/2]\right\},
\]
the lemma follows by continuing the estimate in
\eqref{eq:iniziostima}.
\end{proof}
\begin{remark}
Notice that $\dd$ depends on $\mu_1,\mu_2,\beta$, and also on $p$ and $N$ (via $a$ and $r$).
On the other hand, in case $N\ge 3$ and $p=2^*-1$, we have that $a=2^*/2$, $r=0$ and $\dd$ does not
depend on $\rho_1,\rho_2$, and actually its definition coincides with that given in
\eqref{eq:def_Lambda}. Then we have, for any $(v_1,v_2)\in H^1_0(\Omega;\R^2)$,
\begin{equation}\label{eq:lambda_max_def}
\frac{2 S_N}{2^*} \left(\mu_1\|\nabla v_1\|_2^{2^*}+2\beta^+ \|\nabla v_1\|_2^{2^*/2}\|\nabla v_{2}
\|_2^{2^*/2}+\mu_2\|\nabla v_{2}\|_2^{2^*}\right) \le \dd \left(\|\nabla v_1\|_2^{2}+
\|\nabla v_{2}\|_2^2\right)^{2^*/2}
\end{equation}
(recall the definition of $S_N = C_{N,2^*-1}$ given in \eqref{eq:Sobolev_constant}).
To see this,  we notice that for any $(v_1,v_2)$ one can find $t\in[0,\pi/2]$ such that
\[
\|\nabla v_1\|_2 = \left(\|\nabla v_1\|_2^{2}+
\|\nabla v_{2}\|_2^2\right)^{1/2}\cos t,\qquad
\|\nabla v_2\|_2 = \left(\|\nabla v_1\|_2^{2}+
\|\nabla v_{2}\|_2^2\right)^{1/2}\sin t,
\]
and we substitute in \eqref{eq:defdd}.
\end{remark}

\subsection{The level \texorpdfstring{$c_\alpha$}{c alpha} is achieved}\label{sec:calpha_achiev}

As we mentioned, we will look for local minimizers of $\Ecal$ on $\Bcal_\alpha$, hence at level
$c_\alpha$, for suitable values of $\alpha$. A first necessary step is to prove that $c_\alpha$ is achieved
(possibly on $\Ucal_\alpha$, the topological boundary of $\Bcal_\alpha$).
This is easily obtained, for every $\alpha\ge \lambda_1(\Omega)$, in the Sobolev subcritical case:
indeed, in such situation, $\Bcal_\alpha$ is weakly compact and $\Ecal$ weakly lower semicontinuous.
On the other hand, if $N\ge3$ and $p=2^*-1$, $\Ecal$ is no longer weakly lower semicontinuous. In this
situation, inspired by the celebrated paper by Brezis and Nirenberg \cite{MR709644}, we can recover
compactness of the minimizing sequences by imposing a smallness condition on the masses
$\rho_1,\rho_2$ and on $\alpha$, as stated in Proposition \ref{prop:compact_intro}. Actually, here we will prove a slightly more general result, considering sequences in which also the masses are not fixed; this will be useful when dealing with stability issues.

In the following, recall that $\dd$ has been introduced in \eqref{eq:defdd} (or, equivalently, in
\eqref{eq:def_Lambda}) and that, in the Sobolev critical case, it does not depend on $\rho_1,\rho_2$.

\begin{proposition}\label{prop:Sobcritconv}
Let $\alpha>\lambda_1(\Omega)$, $\rho_1,\rho_2>0$ satisfy
\[
(\rho_1+\rho_2)(\alpha - \lambda_1(\Omega)) <
\frac{1}{\dd^{(N-2)/2}},
\]
and let $(u_{1,n},u_{2,n})_n$ be such that
\begin{equation}\label{eq:minimizing_seq}
\begin{cases}
\|u_{i,n}\|_2^2=\rho_i + o(1) \qquad & \text{for } i=1,2, \\
\|\nabla u_{1,n}\|_2^2+\|\nabla u_{2,n}\|_2^2 \leq \alpha(\rho_1+\rho_2) + o(1)
\qquad &  \\
c_\alpha \leq \mathcal{E}(u_{1,n},u_{2,n}) \leq c_\alpha +o(1) \quad &
\end{cases}
\end{equation}
as $n\to \infty$. Then, up to subsequences,
\[
(u_{1,n},u_{2,n}) \to (\bar u_{1},\bar u_{2}), \qquad\text{strongly in }H^1_0(\Omega).
\]
In particular, $c_\alpha$ is achieved.
\end{proposition}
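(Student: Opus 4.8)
\medskip

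The plan is to run a Brezis--Nirenberg-type concentration-compactness argument, exploiting the smallness condition to rule out any loss of mass at the Sobolev-critical exponent. First I would use \eqref{eq:minimizing_seq} together with the coercivity-type estimate \eqref{eq:importantestimate} (or, more simply, the fact that the constraints already force $\|\nabla u_{i,n}\|_2$ to stay bounded) to conclude that $(u_{1,n},u_{2,n})_n$ is bounded in $H^1_0(\Omega;\R^2)$. Passing to a subsequence, I would extract a weak limit $(\bar u_1,\bar u_2)$, with $u_{i,n}\rightharpoonup \bar u_i$ weakly in $H^1_0$, strongly in $L^2(\Omega)$ (by compact embedding), and a.e.\ in $\Omega$; the $L^2$ convergence and \eqref{eq:minimizing_seq} give $\|\bar u_i\|_2^2=\rho_i$. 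Set $v_{i,n}:=u_{i,n}-\bar u_i\rightharpoonup 0$. The goal is to show $\nabla v_{i,n}\to 0$ in $L^2$, i.e.\ $\ell_i:=\lim_n\|\nabla v_{i,n}\|_2^2=0$ for $i=1,2$ (the limits exist along a further subsequence).

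\medskip

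The key step is the Brezis--Lieb lemma applied at the Sobolev-critical exponent. By Brezis--Lieb (for the $H^1_0$ seminorm, for $L^2$, and for $L^{2^*}$, plus its vector-valued analogue for the cross term $\int |u_1|^{2^*/2}|u_2|^{2^*/2}$), one gets
\begin{equation}\label{eq:BL_split}
\|\nabla u_{i,n}\|_2^2 = \|\nabla \bar u_i\|_2^2 + \|\nabla v_{i,n}\|_2^2 + o(1),\qquad
\Ecal(u_{1,n},u_{2,n}) = \Ecal(\bar u_1,\bar u_2) + \tfrac12\big(\|\nabla v_{1,n}\|_2^2+\|\nabla v_{2,n}\|_2^2\big) - \tfrac{1}{2^*} G(v_{1,n},v_{2,n}) + o(1),
\end{equation}
where $G(v_1,v_2):=\int_\Omega \mu_1|v_1|^{2^*}+2\beta|v_1|^{2^*/2}|v_2|^{2^*/2}+\mu_2|v_2|^{2^*/2}\cdot|v_2|^{2^*/2}$ (using $p+1=2^*$ here). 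Since $\bar u=(\bar u_1,\bar u_2)\in\Mcal$ and $\|\nabla\bar u_1\|_2^2+\|\nabla\bar u_2\|_2^2\le \alpha(\rho_1+\rho_2)$ by weak lower semicontinuity of the seminorm and the second line of \eqref{eq:minimizing_seq}, we have $\bar u\in\Bcal_\alpha$, hence $\Ecal(\bar u_1,\bar u_2)\ge c_\alpha$. Combining this with the third line of \eqref{eq:minimizing_seq} and \eqref{eq:BL_split} yields
\begin{equation}\label{eq:keyineq}
\tfrac12(\ell_1+\ell_2) \le \tfrac{1}{2^*} \lim_n G(v_{1,n},v_{2,n}).
\end{equation}
On the other hand, bounding $G(v_{1,n},v_{2,n})$ by the Sobolev inequality exactly as in \eqref{eq:importantestimateG}--\eqref{eq:lambda_max_def} (with $\beta$ replaced by $\beta^+$), one obtains $\tfrac{1}{2^*}\lim_n G(v_{1,n},v_{2,n}) \le \tfrac{\dd}{2}(\ell_1+\ell_2)^{2^*/2}$. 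Plugging into \eqref{eq:keyineq} gives $\ell_1+\ell_2 \le \dd (\ell_1+\ell_2)^{2^*/2}$, so either $\ell_1+\ell_2=0$ (which is what we want) or
\begin{equation}\label{eq:lowerbd_ell}
\ell_1+\ell_2 \ge \dd^{-(N-2)/2},
\end{equation}
since $2^*/2>1$ forces $(\ell_1+\ell_2)^{2^*/2-1}\ge \dd^{-1}$.

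\medskip

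To exclude the alternative \eqref{eq:lowerbd_ell} I would use the gradient bound together with the Poincaré-type information encoded in $\lambda_1(\Omega)$. Indeed, from $\|\nabla v_{i,n}\|_2^2 = \|\nabla u_{i,n}\|_2^2-\|\nabla\bar u_i\|_2^2+o(1)$ and $\|\nabla\bar u_i\|_2^2\ge \lambda_1(\Omega)\|\bar u_i\|_2^2 = \lambda_1(\Omega)\rho_i$, combined with $\|\nabla u_{1,n}\|_2^2+\|\nabla u_{2,n}\|_2^2\le \alpha(\rho_1+\rho_2)+o(1)$, I get
\[
\ell_1+\ell_2 \le \alpha(\rho_1+\rho_2) - \lambda_1(\Omega)(\rho_1+\rho_2) = (\rho_1+\rho_2)(\alpha-\lambda_1(\Omega)) < \dd^{-(N-2)/2},
\]
by the standing hypothesis. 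This directly contradicts \eqref{eq:lowerbd_ell}, so $\ell_1=\ell_2=0$, i.e.\ $u_{i,n}\to\bar u_i$ strongly in $H^1_0(\Omega)$. Strong convergence then passes to the energy, so $\Ecal(\bar u_1,\bar u_2)=c_\alpha$ and, since also $\bar u\in\Bcal_\alpha$, the infimum $c_\alpha$ is attained; applying this to a minimizing sequence with $\|u_{i,n}\|_2^2=\rho_i$ exactly recovers Proposition \ref{prop:compact_intro}. I expect the main obstacle to be the bookkeeping in the Brezis--Lieb splitting \eqref{eq:BL_split}, in particular justifying the split of the coupling term $\int|v_{1,n}|^{2^*/2}|v_{2,n}|^{2^*/2}$ (which needs a vector-valued Brezis--Lieb / a.e.-convergence plus equi-integrability argument) and carefully tracking that all the cross terms involving $\bar u_i$ and $v_{i,n}$ vanish in the limit because of weak convergence $v_{i,n}\rightharpoonup 0$ and boundedness; everything else is a routine adaptation of Brezis--Nirenberg.
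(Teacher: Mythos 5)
Your proposal is correct and follows essentially the same route as the paper: weak limit plus Brezis--Lieb splitting of the energy (including the cross term), the bound $\mathcal{E}(\bar u_1,\bar u_2)\ge c_\alpha$ from admissibility, the Sobolev/$\Lambda$ estimate forcing the dichotomy $\ell_1+\ell_2=0$ or $\ell_1+\ell_2\ge \Lambda^{-(N-2)/2}$, and the Poincar\'e bound $\|\nabla\bar u_i\|_2^2\ge\lambda_1(\Omega)\rho_i$ to contradict the second alternative via the smallness hypothesis. The only difference is cosmetic (you phrase the dichotomy through the limits $\ell_i$ where the paper assumes $\|\nabla v_{1,n}\|_2^2+\|\nabla v_{2,n}\|_2^2\ge K>0$ for contradiction), and the cross-term bookkeeping you flag as the main obstacle is exactly what the paper carries out with the elementary inequalities for $||a+b|^q-|a|^q|$.
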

\begin{proof}[Proof of Proposition \ref{prop:compact_intro}]
The proposition follows as a particular case of Proposition \ref{prop:Sobcritconv}, when
in  \eqref{eq:minimizing_seq} both $\|u_{i,n}\|_2^2=\rho_i$, $i=1,2$, and
$\|\nabla u_{1,n}\|_2^2+\|\nabla u_{2,n}\|_2^2 \leq \alpha(\rho_1+\rho_2)$.
\end{proof}
\begin{proof}[Proof of Proposition \ref{prop:Sobcritconv}]
By assumption there exists $(\bar u_1,\bar u_2)\in H^1_0(\Omega;\R^2)$ such that, up to subsequences,
\[
\begin{cases}
\|\bar u_i\|_2^2=\rho_i \qquad & \text{for } i=1,2 \\
u_{i,n} \rightharpoonup \bar u_i \qquad & H^1_0(\Omega)\text{-weak } \text{for } i=1,2\\
\|\nabla \bar u_i\|_2^2 \leq \liminf_{n\to\infty} \|\nabla u_{i,n}\|_2^2 \qquad & \text{for } i=1,2.
\end{cases}
\]
Notice that $(\bar u_1,\bar u_2)$ is admissible for the minimization problem $c_\alpha$, whence
\begin{equation}\label{eq:bar_u_i_admissible}
\mathcal{E}(\bar u_1,\bar u_2)\geq c_\alpha.
\end{equation}
Let $v_{i,n}=u_{i,n}-\bar u_i$ and notice that, for $i=1,2$,
\begin{equation}\label{eq:v_i_n}
v_{i,n}\rightharpoonup 0 \quad\text{weakly both } H^1_0(\Omega) \text{ and } L^{2^*}(\Omega),
\qquad v_{i,n}\to0 \quad \text{strongly } L^2(\Omega),
\end{equation}
where $2^*=2N/(N-2)$.

Notice that the strong convergence of a subsequence of $(u_{1,n},u_{2,n})$ is equivalent to the statement:
\begin{equation}\label{eq:BN_easy_case}
\text{ there exists a subsequence } (v_{1,n_k},v_{2,n_k}) \text{ such that }   \|\nabla v_{1,n_k}\|_2^2+\|\nabla v_{2,n_k}\|_2^2  \to 0.
\end{equation}
In such a case, by continuity of the Sobolev embeddings, we have that $c_\alpha=\mathcal{E}(\bar u_1,\bar u_2)$. Since a minimizing sequence for $c_\alpha$ exists, and it satisfies \eqref{eq:minimizing_seq}, we deduce that $c_\alpha$ is achieved.

To conclude the proof, suppose by contradiction that \eqref{eq:BN_easy_case} does not hold, so that
\begin{equation}\label{eq:BN_difficult_case}
\|\nabla v_{1,n}\|_2^2+\|\nabla v_{2,n}\|_2^2 \geq K>0 \quad \text{eventually.}
\end{equation}
We can write
\begin{multline}\label{eq:energy_rewritten}
\mathcal{E}(u_{1,n},u_{2,n}) =
\frac{1}{2} \left(\|\nabla(\bar u_1+v_{1,n})\|_2^2+\|\nabla(\bar u_2+v_{2,n})\|_2^2\right) \\
-\frac{1}{2^*} \left(\mu_1 \|\bar u_1+v_{1,n}\|_{2^*}^{2^*}
+2\beta \|(\bar u_1+v_{1,n})(\bar u_2+v_{2,n})\|_{2^*/2}^{2^*/2}
+\mu_2 \|\bar u_2+v_{2,n}\|_{2^*}^{2^*} \right).
\end{multline}
Notice that, by weak convergence, we have, for $i=1,2$,
\begin{equation}\label{eq:weak_conv}
\|\nabla (\bar u_i+v_{i,n})\|_2^2=\|\nabla \bar u_i\|_2^2+\|\nabla v_{i,n}\|_2^2+o(1)
\qquad \text{as }n\to\infty.
\end{equation}
In order to estimate the remaining terms of \eqref{eq:energy_rewritten}, we recall the following Lemma by  Brezis and Lieb \cite{BL83}: given $1\leq q<\infty$,  if $\{f_n\}_n\subset L^q(\Omega)$ is a sequence bounded in $L^q(\Omega)$, such that $f_n\to f$ almost everywhere, then
\begin{equation}\label{eq:BL}
\|f_n\|_q^q=\|f\|_q^q+\|f_n-f\|_q^q+o(1) \qquad \text{as }n\to\infty.
\end{equation}

\smallbreak

We apply \eqref{eq:BL} first with $f_n=u_{i,n}=\bar u_i+v_{i,n}$ and $q=2^*$ to get
\begin{equation}\label{eq:L4conv}
\|\bar u_i+v_{i,n}\|_{2^*}^{2^*}=\|\bar u_i\|_{2^*}^{2^*}+\|v_{i,n}\|_{2^*}^{2^*}+o(1)
\qquad \text{as }n\to\infty,
\end{equation}
then we apply it with $f_n=(\bar u_1+v_{1,n})(\bar u_2+v_{2,n})$ and $q=2^*/2$ to obtain
\begin{equation}\label{eq:mixed_term_conv1}
 \|(\bar u_1+v_{1,n})(\bar u_2+v_{2,n})\|_{2^*/2}^{2^*/2}
=\|\bar u_1\bar u_2\|_{2^*/2}^{2^*/2}+\|\bar u_1 v_{2,n}+\bar u_2 v_{1,n} +v_{1,n}v_{2,n}\|_{2^*/2}^{2^*/2}+o(1)
\end{equation}
as $n\to\infty$. In order to estimate the second term in the right hand side of \eqref{eq:mixed_term_conv1}, we shall need two inequalities. For every $q>1$ and for every $a,b\in\R$ it holds
\begin{equation}\label{eq:ineq1}
|a+b|^q\leq 2^{q-1} (|a|^q+|b|^q);
\end{equation}
\begin{equation}\label{eq:ineq2}
||a+b|^q-|a|^q| \leq C(|a|^{q-1}|b|+|b|^q),
\end{equation}
for a constant $C$ not depending on $a$ and $b$. By \eqref{eq:ineq1}, we have
\begin{equation}\label{eq:ineq3}
\|\bar u_1 v_{2,n}+\bar u_2 v_{1,n}\|_{2^*/2}^{2^*/2} \leq 2^{(2^*-2)/2}
\left( \|\bar u_1v_{2,n}\|_{2^*/2}^{2^*/2} +\|\bar u_2v_{1,n}\|_{2^*/2}^{2^*/2} \right)
=o(1) \qquad \text{as }n\to\infty,
\end{equation}
because $|v_{i,n}|^{2^*/2}\rightharpoonup 0$ in $L^2(\Omega)$-weak as $n\to+\infty$, for $i=1,2$. Then using \eqref{eq:ineq2}, the H\"older inequality and \eqref{eq:ineq3}, we compute
\begin{multline*}
\left| \|\bar u_1 v_{2,n}+\bar u_2 v_{1,n} +v_{1,n}v_{2,n}\|_{2^*/2}^{2^*/2} -
\|v_{1,n}v_{2,n}\|_{2^*/2}^{2^*/2}\right| \\
\leq C \int_\Omega \left( |v_{1,n}v_{2,n}|^{(2^*-2)/2}|\bar u_1 v_{2,n}+\bar u_2 v_{1,n}|+|\bar u_1 v_{2,n}+\bar u_2 v_{1,n}|^{2^*/2} \right)\,dx\\
\leq C \|v_{1,n}\|_{2^*}^{(2^*-2)/2}\|v_{2,n}\|_{2^*}^{(2^*-2)/2}\|\bar u_1 v_{2,n}+\bar u_2 v_{1,n}\|_{2^*/2}+\|\bar u_1 v_{2,n}+\bar u_2 v_{1,n}\|_{2^*/2}^{2^*/2} =o(1)
\end{multline*}
as $n\to+\infty$. This last estimate, replaced into \eqref{eq:mixed_term_conv1}, provides
\begin{equation}\label{eq:mixed_term_conv}
 \|(\bar u_1+v_{1,n})(\bar u_2+v_{2,n})\|_{2^*/2}^{2^*/2}
=\|\bar u_1\bar u_2\|_{2^*/2}^{2^*/2}+\|v_{1,n}v_{2,n}\|_{2^*/2}^{2^*/2}+o(1)
\end{equation}
as $n\to+\infty$.

By replacing \eqref{eq:weak_conv}, \eqref{eq:L4conv} and \eqref{eq:mixed_term_conv} into \eqref{eq:energy_rewritten}, we see that
\[
\mathcal{E}(u_{1,n},u_{2,n})=\mathcal{E}(\bar u_1,\bar u_2)+\mathcal{E}(v_{1,n},v_{2,n})+o(1) \qquad \text{as }n\to\infty.
\]
The last expression, together with \eqref{eq:minimizing_seq} and \eqref{eq:bar_u_i_admissible}, implies
\[
\mathcal{E}(v_{1,n},v_{2,n})\leq o(1) \qquad \text{as }n\to\infty,
\]
whence, using \eqref{eq:importantestimateG} (with $r=0$, $a=2^*/2$ and $C_{N,2^*-1}=S_N$) and \eqref{eq:lambda_max_def},
\[
\begin{split}
\|\nabla v_{1,n}\|_2^2+\|\nabla v_{2,n}\|_2^2 &\leq
%\frac{2}{2^*}\left(\mu_1\|v_{1,n}\|_{2^*}^{2^*}+2\beta\|v_{1,n}v_{2,n}\|_{2^*/2}^{2^*/2}+\mu_2 \|v_{2,n}\|_{2^*}^{2^*}\right)+o(1) \\
%\leq
%\frac{2}{2^*}\left(\mu_1\|v_{1,n}\|_{2^*}^{2^*}+2\beta^+\|v_{1,n}\|_{2^*}^{2^*/2}\|v_{2,n}\|_{2^*}^{2^*/2}+\mu_2 \|v_{2,n}\|_{2^*}^{2^*}\right)+o(1)
\frac{2 S_N}{2^*} \left(\mu_1\|\nabla v_{1,n}\|_2^{2^*}+2\beta^+ \|\nabla v_{1,n}\|_2^{2^*/2}\|\nabla v_{2,n}\|_2^{2^*/2}+\mu_2\|\nabla v_{2,n}\|_2^{2^*}\right)+o(1)\\
&\leq\dd \left(\|\nabla v_1\|_2^{2}+
\|\nabla v_{2}\|_2^2\right)^{2^*/2}
+o(1).
\end{split}
\]
Now, we can use \eqref{eq:BN_difficult_case} to rewrite the last inequality as
\[
\left(\|\nabla v_{1,n}\|_2^2+\|\nabla v_{2,n}\|_2^2\right)^{(2^*-2)/2}\ge \frac{1}{\lamax} +o(1).
\]
We combine the previous inequality with \eqref{eq:weak_conv} to obtain
\[
\begin{split}
\left( \frac{1}{\lamax} + o(1) \right)^{(N-2)/2} &\leq
\|\nabla v_{1,n}\|_2^2+\|\nabla v_{2,n}\|_2^2
=\|\nabla u_{1,n}\|_2^2+\|\nabla u_{2,n}\|_2^2  - (\|\nabla \bar u_1 \|_2^2+\|\nabla \bar u_2\|_2^2)+o(1)\\
&\le (\rho_1+\rho_2)\alpha - \lambda_1(\Omega)(\rho_1+\rho_2) +o(1),
\end{split}
\]
as $n\to+\infty$, which contradicts the assumption.
\end{proof}

\subsection{Existence of ground states}\label{sec:abstractexist}

This section is devoted to prove the following result.
\begin{theorem}\label{thm:existence_bad_cond}
Let $\rho_1,\rho_2\ge0$ be such that
\begin{equation}\label{eq:mainassL}
\dd(\rho_1,\rho_2) \cdot (\rho_1+\rho_2)^{a-1} \le  \frac{(a-1)^{a-1}}{a^a}
\lambda_j(\Omega)^{-(a-1)},
\end{equation}
where $j=1$ if $\beta\ge-\sqrt{\mu_1\mu_2}$, $j=2$ otherwise. Let
$\bar \alpha= \frac{a}{a-1}\lambda_i(\Omega)$.

Then $c_{\bar\alpha}$ is achieved by
$(\bar u_1,\bar u_2)\in \Bcal_{\bar \alpha}\setminus \Ucal_{\bar \alpha}$ such that  $\Ecal(\bar u_1, \bar u_2)= c_{\bar \alpha}$, which implies that  $(\bar u_1, \bar u_2)$ is a local minimum of
$\Ecal|_{\Mcal}$, corresponding to a positive solution of \eqref{eq:system_elliptic} for some
$(\omega_1,\omega_2)\in \R^2$. Equivalently, $(\rho_1,\rho_2)\in A$ as defined in \eqref{eq:defA}.
\end{theorem}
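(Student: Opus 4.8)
The plan is to show that, under the mass condition \eqref{eq:mainassL}, the level $c_{\bar\alpha}$ is strictly smaller than $\hat c_{\bar\alpha}$, which forces any minimizer of $\Ecal$ on $\Bcal_{\bar\alpha}$ to lie in the interior $\Bcal_{\bar\alpha}\setminus\Ucal_{\bar\alpha}$, hence to be a genuine local minimum of $\Ecal|_{\Mcal}$ and therefore (by the Lagrange multiplier rule) a solution of \eqref{eq:system_elliptic}. First I would recall that $c_{\bar\alpha}$ is achieved: in the Sobolev-subcritical case this is immediate since $\Bcal_{\bar\alpha}$ is weakly closed and $\Ecal|_{\Mcal}$ is weakly l.s.c., while in the Sobolev-critical case it follows from Proposition \ref{prop:compact_intro}, provided $\bar\alpha$ satisfies \eqref{eq:compact_intro}. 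So an early step is to check that the choice $\bar\alpha=\frac{a}{a-1}\lambda_i(\Omega)$, together with \eqref{eq:mainassL}, implies the compactness condition $(\rho_1+\rho_2)(\bar\alpha-\lambda_1(\Omega))\le \dd^{-(N-2)/2}$ when $p=2^*-1$ (here $a=2^*/2$, so $a/(a-1)=2^*/(2^*-2)=N/2$ and $\bar\alpha-\lambda_1=\frac{2}{N-2}\lambda_1$, and \eqref{eq:mainassL} reads $\dd(\rho_1+\rho_2)^{2/(N-2)}\le \frac{(2/(N-2))^{2/(N-2)}}{(N/2)^{N/2}}\lambda_1^{-2/(N-2)}$, which one rearranges into the desired bound).

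The heart of the argument is the comparison $c_{\bar\alpha}<\hat c_{\bar\alpha}$. For the lower bound on $\hat c_{\bar\alpha}$ I would invoke Lemma \ref{lem:hatc_from_below}, which gives
\[
\hat c_{\bar\alpha} > \tfrac12\bigl((\rho_1+\rho_2)\bar\alpha - \dd(\rho_1,\rho_2)(\rho_1+\rho_2)^a\bar\alpha^a\bigr) = \tfrac12(\rho_1+\rho_2)\,g(\bar\alpha),
\]
where $g(\alpha):=\alpha - \dd(\rho_1,\rho_2)(\rho_1+\rho_2)^{a-1}\alpha^a$. The function $g$ on $[\lambda_1(\Omega),\infty)$ is concave-down-ish (it is $\alpha$ minus a superlinear term, $a>1$), vanishing-derivative at $\alpha_{\max}=\bigl(a\,\dd(\rho_1+\rho_2)^{a-1}\bigr)^{-1/(a-1)}$, where it attains its maximum value $g(\alpha_{\max}) = \alpha_{\max}(1-1/a) = \frac{a-1}{a}\alpha_{\max}$. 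A direct computation shows that condition \eqref{eq:mainassL} is exactly equivalent to $\alpha_{\max}\ge \frac{a}{a-1}\lambda_i(\Omega) = \bar\alpha$; and in fact the precise form of the right-hand side of \eqref{eq:mainassL}, $\frac{(a-1)^{a-1}}{a^a}\lambda_i(\Omega)^{-(a-1)}$, is tuned so that $\bar\alpha \le \alpha_{\max}$, i.e. $\bar\alpha$ sits on the increasing part of $g$. Consequently $g(\bar\alpha)>g(\lambda_i(\Omega))$ — one should check the inequality is strict, using $a>1$ and $\lambda_i(\Omega)<\bar\alpha\le\alpha_{\max}$ — and more importantly $g(\bar\alpha)\ge g(\lambda_1(\Omega)) = \lambda_1(\Omega) - \dd(\rho_1+\rho_2)^{a-1}\lambda_1(\Omega)^a$. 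For the upper bound on $c_{\bar\alpha}$ I would simply test with the normalized first eigenfunctions $(\sqrt{\rho_1}\varphi_1,\sqrt{\rho_2}\varphi_1)\in\Bcal_{\bar\alpha}$ (this pair lies in $\Bcal_{\bar\alpha}$ since $\bar\alpha\ge\lambda_1(\Omega)$), obtaining $c_{\bar\alpha}\le \Ecal(\sqrt{\rho_1}\varphi_1,\sqrt{\rho_2}\varphi_1)$; one then bounds this energy from above. Here the role of the index $j$ enters: when $\beta\ge-\sqrt{\mu_1\mu_2}$ the quadratic form in the nonlinear part is controlled using $\lambda_1$, and when $\beta<-\sqrt{\mu_1\mu_2}$ one needs $\lambda_2$ because the optimal test configuration should put the two components into orthogonal eigenspaces to exploit the (strong) repulsion — so $\varphi_1$ is replaced, for one component, by $\varphi_2$, and $\bar\alpha$ is adjusted with $\lambda_2$ in place of $\lambda_1$.

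Putting the two bounds together: $c_{\bar\alpha}\le \tfrac12(\rho_1+\rho_2)\lambda_i(\Omega) - (\text{positive nonlinear term}) < \tfrac12(\rho_1+\rho_2)\lambda_i(\Omega)$, while $\hat c_{\bar\alpha} > \tfrac12(\rho_1+\rho_2)g(\bar\alpha)$ with $g(\bar\alpha)\ge g(\lambda_i(\Omega))$; the arithmetic to conclude $c_{\bar\alpha}<\hat c_{\bar\alpha}$ is where the exact constants must line up, and I expect the main technical obstacle to be precisely the bookkeeping that reconciles the upper estimate of $\Ecal(\sqrt{\rho_1}\varphi_1,\sqrt{\rho_2}\varphi_1)$ (which involves $\|\varphi_1\|_{p+1}$ and possibly $\|\varphi_2\|_{p+1}$ and the Rayleigh quotients) with the lower bound $\tfrac12(\rho_1+\rho_2)g(\lambda_i(\Omega))$ coming from the Gagliardo-Nirenberg estimate — in other words, verifying that the constant $\dd(\rho_1,\rho_2)$ defined via the sharp GN constant genuinely dominates the contribution from the test functions. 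Once $c_{\bar\alpha}<\hat c_{\bar\alpha}$ is established, the minimizer $(\bar u_1,\bar u_2)$ cannot touch $\Ucal_{\bar\alpha}$, so it is an interior local minimizer; taking $|\bar u_i|$ and using the strong maximum principle (with $\Omega$ Lipschitz, so continuity up to the boundary) gives positivity when $\rho_i>0$, and the Lagrange multiplier rule yields $(\omega_1,\omega_2)\in\R^2$ and \eqref{eq:system_elliptic}. This exhibits the required $\alpha=\bar\alpha$ in the definition \eqref{eq:defA}, so $(\rho_1,\rho_2)\in A$.
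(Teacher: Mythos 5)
Your overall route coincides with the paper's: bound $\hat c_{\bar\alpha}$ from below via Lemma \ref{lem:hatc_from_below}, bound $c_{\bar\alpha}$ from above by testing with eigenfunction pairs, choose $\bar\alpha$ to optimize the resulting condition, and check the compactness threshold \eqref{eq:compact_intro} separately when $p=2^*-1$. However, there is a genuine gap precisely at the step you defer as ``where the exact constants must line up''. Writing $g(\alpha)=\alpha-\dd(\rho_1,\rho_2)(\rho_1+\rho_2)^{a-1}\alpha^a$, the inequalities you actually record, namely $\bar\alpha\le\alpha_{\max}$ and hence $g(\bar\alpha)\ge g(\lambda_j(\Omega))$, do \emph{not} close the argument: since $g(\lambda_j(\Omega))<\lambda_j(\Omega)$ while the test-function bound only gives $c_{\bar\alpha}\le\hat c_{\lambda_j(\Omega)}\le\tfrac12(\rho_1+\rho_2)\lambda_j(\Omega)$, these two estimates point the wrong way. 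What is needed is the stronger inequality $g(\bar\alpha)\ge\lambda_j(\Omega)$, and the whole point of \eqref{eq:mainassL} is that it is \emph{exactly} this inequality: with $\bar\alpha=\tfrac{a}{a-1}\lambda_j(\Omega)$ one computes $(\bar\alpha-\lambda_j(\Omega))/\bar\alpha^{a}=\tfrac{(a-1)^{a-1}}{a^a}\lambda_j(\Omega)^{-(a-1)}$, so \eqref{eq:mainassL} is equivalent to $\dd(\rho_1,\rho_2)(\rho_1+\rho_2)^{a-1}\le(\bar\alpha-\lambda_j(\Omega))/\bar\alpha^a$, i.e.\ to $g(\bar\alpha)\ge\lambda_j(\Omega)$. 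Then $\hat c_{\bar\alpha}>\tfrac12(\rho_1+\rho_2)\lambda_j(\Omega)\ge\hat c_{\lambda_j(\Omega)}\ge c_{\bar\alpha}$ and you are done. In particular, no bookkeeping involving $\|\varphi_1\|_{p+1}$ or the sharp Gagliardo--Nirenberg constant is required: the nonlinear contribution of the test pair is simply discarded after checking it has a favorable sign. This is where $j$ enters: for $\beta\ge-\sqrt{\mu_1\mu_2}$ the quantity $\mu_1\rho_1^{p+1}+2\beta(\rho_1\rho_2)^{(p+1)/2}+\mu_2\rho_2^{p+1}$ is nonnegative, while for $\beta<-\sqrt{\mu_1\mu_2}$ one tests with $\bigl(\sqrt{\rho_1}\varphi_2^+/\|\varphi_2^+\|_2,\sqrt{\rho_2}\varphi_2^-/\|\varphi_2^-\|_2\bigr)$, whose components have \emph{disjoint supports} so the $\beta$-term vanishes identically --- merely placing the components in orthogonal eigenspaces, as you suggest, would not kill it.

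Two smaller points. First, the paper reaches the conclusion through Lemma \ref{lem:c<hat_c} with $\alpha_1=\lambda_j(\Omega)$ and $\alpha_2=\bar\alpha$, using $c_{\bar\alpha}=\min\{\hat c_\alpha:\lambda_1(\Omega)\le\alpha\le\bar\alpha\}\le\hat c_{\lambda_j(\Omega)}$; your direct test-function bound for $c_{\bar\alpha}$ is an equivalent formulation. Second, your sketch of the verification of \eqref{eq:compact_intro} in the critical case contains arithmetic slips: $\bar\alpha-\lambda_1(\Omega)=\tfrac{N-2}{2}\lambda_1(\Omega)$, not $\tfrac{2}{N-2}\lambda_1(\Omega)$, and the denominator in \eqref{eq:mainassL} is $a^a=(N/(N-2))^{N/(N-2)}$, not $(N/2)^{N/2}$. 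The clean argument is the paper's: from $\dd(\rho_1+\rho_2)^{2/(N-2)}\le(\bar\alpha-\lambda_j(\Omega))/\bar\alpha^{N/(N-2)}<(\bar\alpha-\lambda_1(\Omega))^{-2/(N-2)}$ one obtains \eqref{eq:passa2} directly, with the strict inequality required by Lemma \ref{lem:c<hat_c}.
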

First of all, we state a sufficient condition for the above theorem to hold, in terms of $\hat c_\alpha$.
\begin{lemma}\label{lem:c<hat_c}
Let us assume that $\rho_1,\rho_2>0$ are such that, for some $\alpha_1,\alpha_2$,
\[
\lambda_1(\Omega)\le \alpha_1 < \alpha_2
\qquad \text{ and } \qquad
\hat c_{\alpha_1} < \hat c_{\alpha_2};
\]
furthermore, in the Sobolev critical case $N\ge3$, $p=2^*-1$, let us also assume that
\[
\alpha_2 < \lambda_1(\Omega) +
\frac{\lamax^{-(N-2)/2}}{\rho_1+\rho_2}.
\]
Then $c_{\alpha_2} < \hat c_{\alpha_2}$, and $c_{\alpha_2}$ is achieved by a positive solution of \eqref{eq:system_elliptic}.
\end{lemma}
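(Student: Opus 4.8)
The plan is to compare the constrained infimum $c_{\alpha_2}$ over the full ball $\Bcal_{\alpha_2}$ with its boundary analogue $\hat c_{\alpha_2}$, exploiting the monotonicity structure of the auxiliary levels. First I would observe the trivial inclusion $\Bcal_{\alpha_1}\subset \Bcal_{\alpha_2}$, which gives $c_{\alpha_2}\le c_{\alpha_1}$; combined with $c_{\alpha_1}\le \hat c_{\alpha_1}$ (since $\Ucal_{\alpha_1}\subset \Bcal_{\alpha_1}$) and the hypothesis $\hat c_{\alpha_1}<\hat c_{\alpha_2}$, this already yields $c_{\alpha_2}\le \hat c_{\alpha_1}<\hat c_{\alpha_2}$. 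The key point, then, is not the inequality $c_{\alpha_2}<\hat c_{\alpha_2}$ itself — that falls out immediately — but rather to leverage the strict gap to locate a genuine minimizer in the \emph{interior} $\Bcal_{\alpha_2}\setminus\Ucal_{\alpha_2}$, which is what produces a free (unconstrained on the nonlinear side) critical point of $\Ecal|_{\Mcal}$.

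Next I would show $c_{\alpha_2}$ is achieved. In the Sobolev-subcritical case this is routine: $\Bcal_{\alpha_2}$ is weakly closed in $\Mcal$ (the gradient bound is weakly closed, the mass constraint is preserved by the compact embedding $H^1_0\hookrightarrow L^2$), and $\Ecal$ is weakly lower semicontinuous because $H^1_0\hookrightarrow L^{p+1}$ is compact for $p+1<2^*$; so the direct method applies. In the Sobolev-critical case $p=2^*-1$, I would invoke Proposition \ref{prop:compact_intro} (equivalently Proposition \ref{prop:Sobcritconv}): the extra hypothesis $\alpha_2<\lambda_1(\Omega)+\lamax^{-(N-2)/2}/(\rho_1+\rho_2)$ is exactly condition \eqref{eq:compact_intro}, i.e. $(\rho_1+\rho_2)(\alpha_2-\lambda_1(\Omega))<\lamax^{-(N-2)/2}$, so every minimizing sequence for $c_{\alpha_2}$ is relatively compact and the infimum is attained at some $(\bar u_1,\bar u_2)\in\Bcal_{\alpha_2}$.

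Now comes the decisive step: the minimizer cannot sit on the boundary $\Ucal_{\alpha_2}$. Indeed, if $(\bar u_1,\bar u_2)\in\Ucal_{\alpha_2}$, then $\Ecal(\bar u_1,\bar u_2)=c_{\alpha_2}\ge \hat c_{\alpha_2}$ by definition of $\hat c_{\alpha_2}$ as the infimum over $\Ucal_{\alpha_2}$, contradicting the strict inequality $c_{\alpha_2}<\hat c_{\alpha_2}$ just established. Hence $(\bar u_1,\bar u_2)\in\Bcal_{\alpha_2}\setminus\Ucal_{\alpha_2}$, i.e. the gradient constraint is inactive, so $(\bar u_1,\bar u_2)$ is a local minimizer of $\Ecal$ on the manifold $\Mcal$ subject only to the mass constraints. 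By the Lagrange multiplier rule there exist $(\omega_1,\omega_2)\in\R^2$ with $(\bar u_1,\bar u_2)$ solving \eqref{eq:system_elliptic}; replacing $u_i$ by $|u_i|$ (which leaves $\Ecal$, the masses, and the gradient norms unchanged) we may take the components nonnegative, and since $\Omega$ is Lipschitz the solutions are continuous up to the boundary, so the strong maximum principle upgrades this to $\bar u_i>0$ in $\Omega$ when $\rho_i>0$ (here the assumption $\rho_1,\rho_2>0$ is used). I expect the only delicate points to be the bookkeeping that the critical-case hypothesis is precisely \eqref{eq:compact_intro} with $\alpha=\alpha_2$, and making sure the minimizing sequence for $c_{\alpha_2}$ genuinely satisfies the structural conditions \eqref{eq:minimizing_seq} required by Proposition \ref{prop:Sobcritconv} — but both are immediate once unwound.
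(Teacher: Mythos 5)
Your proposal is correct and follows essentially the same route as the paper: establish that $c_{\alpha_2}$ is achieved (trivially in the subcritical case, via Proposition \ref{prop:Sobcritconv} in the critical case, where your hypothesis is exactly the strict form of \eqref{eq:compact_intro}), derive $c_{\alpha_2}\le \hat c_{\alpha_1}<\hat c_{\alpha_2}$ from set inclusions, and conclude that the minimizer lies in $\Bcal_{\alpha_2}\setminus\Ucal_{\alpha_2}$. The only cosmetic difference is that you obtain the strict gap from the chain $c_{\alpha_2}\le c_{\alpha_1}\le\hat c_{\alpha_1}$ rather than from the paper's foliation identity $c_{\alpha_2}=\min\{\hat c_\alpha:\lambda_1(\Omega)\le\alpha\le\alpha_2\}$, which is an equivalent observation.
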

\begin{proof}
Firstly, $c_{\alpha_2}$ is achieved by some $(\bar u_1,\bar u_2)\in\Bcal_{\alpha_2}$: as we already
observed, this is trivial in the Sobolev subcritical case, while in the critical one it follows by
Proposition \ref{prop:Sobcritconv}. Next we observe that
\[
c_{\alpha_2} = \min\left\{\hat c_\alpha:\lambda_1(\Omega)\le \alpha \le \alpha_2\right\}\le
 \hat c_{\alpha_1} < \hat c_{\alpha_2}.
\]
We deduce that $(\bar u_1,\bar u_2)\in\Bcal_{\alpha_2}\setminus\Ucal_{\alpha_2}$, and the lemma
follows.
\end{proof}
Let us denote by $\lambda_2(\Omega)$ the second eigenvalue of $-\Delta$ in $H^1_0(\Omega)$, and by $\varphi_2$ a corresponding eigenfunction.
\begin{lemma}\label{lem:estimate_c}
We have
\[
(\sqrt{\rho_1}\varphi_1,\sqrt{\rho_2} \varphi_1)\in \Ucal_{\lambda_1(\Omega)},\qquad \left(\sqrt{\rho_1}\frac{\varphi_2^+}{\|\varphi_2^+\|_2},\sqrt{\rho_2} \frac{\varphi_2^-}{\|\varphi_2^-\|_2}\right)\in \Ucal_{\lambda_2(\Omega)}.
\]
In particular
\[
\hat c_{\lambda_j(\Omega)} \leq \frac{\rho_1+\rho_2}{2} \lambda_j(\Omega), \qquad
\text{for }j=
\begin{cases}
1 & \text{if }\beta\geq -\sqrt{\mu_1\mu_2},\\
2 & \text{if }\beta< -\sqrt{\mu_1\mu_2}.
\end{cases}
\]
\end{lemma}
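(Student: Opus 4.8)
The plan is to prove both assertions by simply exhibiting explicit elements of $\Ucal_{\lambda_1(\Omega)}$ and $\Ucal_{\lambda_2(\Omega)}$ and evaluating $\Ecal$ on them; since $\hat c_\alpha = \inf_{\Ucal_\alpha}\Ecal$, each such element yields the desired upper bound.

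First I would verify the two membership claims. For the pair $(\sqrt{\rho_1}\varphi_1,\sqrt{\rho_2}\varphi_1)$, the $L^2$-normalization of $\varphi_1$ gives $\|\sqrt{\rho_i}\varphi_1\|_2^2=\rho_i$, so the pair lies on $\Mcal$, while $\|\nabla\varphi_1\|_2^2=\lambda_1(\Omega)$ shows that the two gradient norms sum to $(\rho_1+\rho_2)\lambda_1(\Omega)$; hence it belongs to $\Ucal_{\lambda_1(\Omega)}$. For the second pair, note that $\varphi_2$ must change sign, being $L^2$-orthogonal to the positive function $\varphi_1$, so $\varphi_2^\pm$ are both nontrivial and the normalizing factors make sense. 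The key elementary fact is that testing the eigenvalue equation $-\Delta\varphi_2=\lambda_2(\Omega)\varphi_2$ against $\varphi_2^+$ and against $\varphi_2^-$, and using that $\nabla\varphi_2^+$ equals $\nabla\varphi_2$ on $\{\varphi_2>0\}$ and vanishes a.e.\ elsewhere (analogously for $\varphi_2^-$), yields $\|\nabla\varphi_2^\pm\|_2^2=\lambda_2(\Omega)\|\varphi_2^\pm\|_2^2$. Therefore, after normalization, $\bigl(\sqrt{\rho_1}\,\varphi_2^+/\|\varphi_2^+\|_2,\ \sqrt{\rho_2}\,\varphi_2^-/\|\varphi_2^-\|_2\bigr)$ has prescribed masses and total gradient norm $(\rho_1+\rho_2)\lambda_2(\Omega)$, hence lies in $\Ucal_{\lambda_2(\Omega)}$.

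Next I would compute the energy of each pair. On the first one the kinetic part equals $\tfrac12(\rho_1+\rho_2)\lambda_1(\Omega)$ and all three nonlinear contributions are proportional to $\int_\Omega\varphi_1^{p+1}$, giving $\Ecal(\sqrt{\rho_1}\varphi_1,\sqrt{\rho_2}\varphi_1)=\tfrac{\rho_1+\rho_2}{2}\lambda_1(\Omega)-\tfrac{1}{p+1}\bigl(\mu_1 s^2+2\beta st+\mu_2 t^2\bigr)\int_\Omega\varphi_1^{p+1}$ with $s=\rho_1^{(p+1)/4}$, $t=\rho_2^{(p+1)/4}$. The binary quadratic form $\mu_1 s^2+2\beta st+\mu_2 t^2$ is nonnegative on the first quadrant exactly when $\beta\ge-\sqrt{\mu_1\mu_2}$ (trivially for $\beta\ge0$; otherwise $\mu_1 s^2+2\beta st+\mu_2 t^2\ge(\sqrt{\mu_1}s-\sqrt{\mu_2}t)^2\ge0$), which gives $\hat c_{\lambda_1(\Omega)}\le\tfrac{\rho_1+\rho_2}{2}\lambda_1(\Omega)$ in that case. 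For the second pair, the decisive observation is that $\varphi_2^+$ and $\varphi_2^-$ have essentially disjoint supports, so the coupling integral $\int_\Omega|w_1|^{(p+1)/2}|w_2|^{(p+1)/2}$ vanishes identically; since $\mu_1,\mu_2>0$ this forces $\Ecal(w_1,w_2)\le\tfrac{\rho_1+\rho_2}{2}\lambda_2(\Omega)$ for \emph{every} $\beta$, in particular when $\beta<-\sqrt{\mu_1\mu_2}$.

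I do not expect a genuine obstacle: the only points requiring a modicum of care are the identity $\|\nabla\varphi_2^\pm\|_2^2=\lambda_2(\Omega)\|\varphi_2^\pm\|_2^2$ and the disjoint-support property of $\varphi_2^\pm$ — both standard facts about nodal decompositions of Dirichlet eigenfunctions — together with the elementary sign analysis of $\mu_1 s^2+2\beta st+\mu_2 t^2$; everything else is a direct substitution into the definition of $\Ecal$.
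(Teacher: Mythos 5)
Your proposal is correct and follows essentially the same route as the paper: evaluate $\Ecal$ on the two explicit test pairs, use that $\mu_1 s^2+2\beta st+\mu_2 t^2\ge 0$ on the first quadrant when $\beta\ge-\sqrt{\mu_1\mu_2}$ for the first bound, and use the disjoint supports of $\varphi_2^\pm$ (which kill the coupling term) for the second. The only difference is that you spell out the membership claims (in particular $\|\nabla\varphi_2^\pm\|_2^2=\lambda_2(\Omega)\|\varphi_2^\pm\|_2^2$), which the paper dismisses as ``direct.''
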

\begin{proof}
The first assertion is direct. Then
\[
\begin{split}
\hat c_{\lambda_1(\Omega)}& \leq \Ecal(\sqrt{\rho_1}\varphi_1,\sqrt{\rho_2}\varphi_1) \\&=
\frac{\rho_1+\rho_2}{2}
\lambda_1(\Omega)-\frac{\mu_1 \rho_1^{p+1}+2\beta (\rho_1\rho_2)^\frac{p+1}{2} +\mu_2 \rho_2^{p+1}}{p+1}\int_\Omega \varphi_1^4 \leq \frac{\rho_1+\rho_2}{2}
\lambda_1(\Omega),
\end{split}
\]
since $\beta\ge -\sqrt{\mu_1\mu_2}$ implies that $\mu_1 \rho_1^{p+1}+2\beta (\rho_1\rho_2)^\frac{p+1}{2} +\mu_2 \rho_2^{p+1} \geq 0$ for every $\rho_1,\rho_2>0$.

On the other hand,
\begin{align*}
\hat c_{\lambda_2(\Omega)} &\leq \Ecal\left(\sqrt{\rho_1}\frac{\varphi_2^+}{\|\varphi_2^+\|_{L^2(\Omega)}},\sqrt{\rho_2}\frac{\varphi_2^-}{\|\varphi_2^-\|_{L^2(\Omega)}}\right) \\
 	& \leq \frac{\rho_1+\rho_2}{2} \lambda_2(\Omega)
-\frac{\mu_1\rho_1^{p+1}}{(p+1)\|\varphi_2^+\|_{L^2(\Omega)}^{p+1}}\int_\Omega (\varphi_2^+)^{p+1}\,dx
-\frac{\mu_2\rho_2^{p+1}}{(p+1)\|\varphi_2^-\|_{L^2(\Omega)}^{p+1}}\int_\Omega (\varphi_2^-)^{p+1}\,dx\\
&\leq \frac{\rho_1+\rho_2}{2} \lambda_2(\Omega).  \qedhere
\end{align*}
\end{proof}
\begin{proof}[Proof of Theorem \ref{thm:existence_bad_cond}]
In the following let $j=1$ if $\beta\ge -\sqrt{\mu_1\mu_2}$ and  $j=2$ otherwise.
In view of the application of Lemma \ref{lem:c<hat_c}, our aim is to find $\bar\alpha>\lambda_j(\Omega)$ such that
\begin{equation}\label{eq:c<hat_c_app}
\hat c_{\lambda_j(\Omega)} < \hat c_{\bar\alpha}.
\end{equation}
To start with, we look for a sufficient condition implying \eqref{eq:c<hat_c_app}. 
Using Lemmas \ref{lem:hatc_from_below} and \ref{lem:estimate_c}, it is sufficient to find
$\bar\alpha>\lambda_j(\Omega)$ such that
\[
\frac{\rho_1+\rho_2}{2} \lambda_j(\Omega) \le \frac12\left(\bar\alpha(\rho_1+\rho_2) - \dd(\rho_1,\rho_2) (\rho_1+\rho_2)^a\bar\alpha^a \right)
\]
(by Lemma \ref{lem:hatc_from_below}, the right hand side is strictly less than $\hat c_{\bar\alpha}$).
Equivalently,
\begin{equation}\label{eq:passa}
\dd(\rho_1,\rho_2) (\rho_1+\rho_2)^{a-1} \le \frac{\bar \alpha - \lambda_j(\Omega) }{\bar\alpha^a}.
\end{equation}
By a direct computation, recalling that $a>1$, the best possible choice for the right hand side is
\[
\max_{\alpha\ge\lambda_j(\Omega)}\frac{\alpha - \lambda_j(\Omega) }{\alpha^a} =
\frac{(a-1)^{a-1}}{a^a}
\lambda_j(\Omega)^{-(a-1)}, \qquad\text{achieved by }
\bar \alpha= \frac{a}{a-1}\lambda_j(\Omega).
\]
This choice of $\bar \alpha$ is possible, since it makes \eqref{eq:passa} equivalent to
\eqref{eq:mainassL}, the assumption
of the theorem. Furthermore, it is clear that $\bar \alpha>\lambda_j(\Omega)$. Then, in order to apply
Lemma \ref{lem:c<hat_c} and conclude the
proof, we only need to check that, in case $N\ge3$ and $p=2^*-1$, the additional assumption
\begin{equation}\label{eq:passa2}
\bar\alpha < \lambda_1(\Omega) +
\frac{1}{\lamax^{(N-2)/2}(\rho_1+\rho_2)}
\end{equation}
holds true. This is straightforward since, being $a=N/(N-2)$, relation \eqref{eq:passa} provides
\[
\dd (\rho_1+\rho_2)^{2/(N-2)} \le \frac{\bar \alpha - \lambda_j(\Omega) }{\bar\alpha^{N/(N-2)}}
< \frac{\bar \alpha - \lambda_1(\Omega) }{(\bar \alpha - \lambda_1(\Omega))^{N/(N-2)}} = \frac{1}{(\bar \alpha - \lambda_1(\Omega))^{2/(N-2)}},
\]
which is equivalent to \eqref{eq:passa2}.
\end{proof}

\begin{remark}
The solution $(\bar u_1,\bar u_2)$ does \emph{not} coincide with $(\sqrt{\rho_1} \varphi_1,\sqrt{\rho_2} \varphi_1)$, unless $\beta=-\mu_1=-\mu_2$. Indeed, this last pair solves \eqref{eq:system_elliptic} if and only if, for every $i=1,2$,
\begin{multline*}
(\lambda_1(\Omega)+\omega_i)\sqrt{\rho_i}\varphi_1=(\mu_i+\beta)\rho_i^\frac{p}{2}\varphi_1^{p} \iff
\lambda_1(\Omega)+\omega_i=(\mu_i+\beta)\rho_i^\frac{p-1}{2}\varphi_1^{p-1} \\
\iff \lambda_1(\Omega)=-\omega_i,\ \beta=-\mu_1=-\mu_2.
\end{multline*}
\end{remark}

\subsection{The set \texorpdfstring{$A$}{A} is star-shaped} \label{sec:star}

This section is devoted to the proof of the following result.
\begin{proposition}\label{prop_starshaped}
Let $A$ be defined as in \eqref{eq:defA}. Then $A$ is star-shaped with respect to $(0,0)$.
\end{proposition}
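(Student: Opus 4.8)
The plan is to show that if $(\rho_1,\rho_2)\in A\setminus\{(0,0)\}$ then $(t\rho_1,t\rho_2)\in A$ for every $t\in[0,1]$; the case $t=0$ being in $A$ by definition, we may assume $t\in(0,1]$. Since $(\rho_1,\rho_2)\in A$, there exists $\alpha\ge\lambda_1(\Omega)$ with $c_\alpha<\hat c_\alpha$ (and $\alpha$ satisfying \eqref{eq:compact_intro} if $p=2^*-1$), where the levels $c_\alpha$, $\hat c_\alpha$ are relative to the mass pair $(\rho_1,\rho_2)$. The natural idea is to produce, for the rescaled mass pair $(t\rho_1,t\rho_2)$, a parameter $\alpha_t$ which inherits the strict inequality $c^{(t)}_{\alpha_t}<\hat c^{(t)}_{\alpha_t}$, where the superscript $(t)$ denotes quantities relative to $(t\rho_1,t\rho_2)$. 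The first step is therefore to set up the scaling that links the two problems: given $(u_1,u_2)\in\Mcal_{\rho_1,\rho_2}$ with $\|\nabla u_1\|_2^2+\|\nabla u_2\|_2^2=(\rho_1+\rho_2)\alpha$, the pair $(\sqrt t\,u_1,\sqrt t\,u_2)$ lies in $\Mcal_{t\rho_1,t\rho_2}$ and has Dirichlet energy $t(\rho_1+\rho_2)\alpha=(t\rho_1+t\rho_2)\alpha$, so it is natural to keep the \emph{same} $\alpha$, i.e.\ set $\alpha_t=\alpha$. Under this scaling one computes
\[
\Ecal(\sqrt t\,u_1,\sqrt t\,u_2)=\frac t2\bigl(\|\nabla u_1\|_2^2+\|\nabla u_2\|_2^2\bigr)-\frac{t^{(p+1)/2}}{p+1}\int_\Omega\bigl(\mu_1|u_1|^{p+1}+2\beta|u_1|^\frac{p+1}{2}|u_2|^\frac{p+1}{2}+\mu_2|u_2|^{p+1}\bigr).
\]

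The second step is to compare the two energies term by term. Writing $\Ecal(u_1,u_2)=T(u_1,u_2)-P(u_1,u_2)$ with $T$ the kinetic part and $P$ the potential part (note $P\ge0$ is not guaranteed because of $\beta<0$, but on the relevant sets one works with the estimate \eqref{eq:importantestimate} anyway), one has $\Ecal(\sqrt t\,u_1,\sqrt t\,u_2)=tT(u_1,u_2)-t^{(p+1)/2}P(u_1,u_2)$. Since $p>1+4/N>1$ we have $(p+1)/2>1$, hence for $t\in(0,1]$ the factor $t^{(p+1)/2}\le t$. This is the key monotonicity: multiplying the potential part by a smaller factor can only \emph{raise} the energy when $P\ge0$, but the sign of $P$ is the delicate point. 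The clean way to avoid worrying about the sign of $P$ is to work directly with the infima: I would show that the scaling map $(u_1,u_2)\mapsto(\sqrt t\,u_1,\sqrt t\,u_2)$ is a bijection from $\Bcal^{(1)}_\alpha$ onto $\Bcal^{(t)}_\alpha$ and from $\Ucal^{(1)}_\alpha$ onto $\Ucal^{(t)}_\alpha$, and then estimate $\hat c^{(t)}_\alpha-c^{(t)}_\alpha$ from below. Concretely, take $(\bar u_1,\bar u_2)$ achieving (or almost achieving) $c^{(1)}_\alpha$ inside $\Bcal^{(1)}_\alpha\setminus\Ucal^{(1)}_\alpha$; then $(\sqrt t\,\bar u_1,\sqrt t\,\bar u_2)\in\Bcal^{(t)}_\alpha$ is again interior, and on $\Ucal^{(t)}_\alpha$ one uses the lower bound for $\hat c_\alpha$ of Lemma \ref{lem:hatc_from_below} together with the upper bound for $c_\alpha$ coming from the scaled minimizer, carefully tracking the powers of $t$ via $r$ and $a$ (recall $\dd(t\rho_1,t\rho_2)=t^{2r}\dd(\rho_1,\rho_2)$ and $(t\rho_1+t\rho_2)^a=t^a(\rho_1+\rho_2)^a$, while $2r+a=(p+1)/2$). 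In the Sobolev-critical case $p=2^*-1$ one moreover checks that \eqref{eq:compact_intro} is preserved: the left side $(t\rho_1+t\rho_2)(\alpha-\lambda_1(\Omega))=t(\rho_1+\rho_2)(\alpha-\lambda_1(\Omega))$ decreases in $t$ while the right side $\dd^{-(N-2)/2}$ is independent of the masses (as noted after \eqref{eq:defdd}), so the condition only gets easier.

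I expect the main obstacle to be making the comparison $c^{(t)}_\alpha<\hat c^{(t)}_\alpha$ rigorous rather than morally clear: one cannot simply say ``scale the optimal $\alpha$'' because $c_\alpha$ and $\hat c_\alpha$ are defined by infima over different mass constraints, and the strict inequality must survive. The robust route is to fix the $\alpha=\bar\alpha$ that works for $(\rho_1,\rho_2)$, pass through the explicit lower bound on $\hat c_{\bar\alpha}-c_{\bar\alpha}$ — either the one furnished by the proof of Theorem \ref{thm:existence_bad_cond} (via Lemmas \ref{lem:hatc_from_below} and \ref{lem:estimate_c}) if $(\rho_1,\rho_2)$ entered $A$ through that route, or more generally by exploiting that $c_{\bar\alpha}<\hat c_{\bar\alpha}$ is an open condition and the maps $\alpha\mapsto c^{(t)}_\alpha$, $\alpha\mapsto\hat c^{(t)}_\alpha$ depend continuously (indeed, after the substitution, explicitly) on $t$ — and then observe that the scaling by $\sqrt t$ with $t\le1$ only improves all the relevant inequalities because $t^{(p+1)/2}/t=t^{2r+a-1}\le 1$. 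A slightly cleaner alternative, which sidesteps the continuity argument altogether, is: if $(\rho_1,\rho_2)\in A$, then by Theorem \ref{thm:existence_bad_cond}'s criterion there is an $\bar\alpha$ and an honest \emph{local minimizer} $(\bar u_1,\bar u_2)$ with $\Ecal(\bar u_1,\bar u_2)=c_{\bar\alpha}<\hat c_{\bar\alpha}$; scaling this pair by $\sqrt t$ and combining the resulting upper bound for $c^{(t)}_{\bar\alpha}$ with the Lemma \ref{lem:hatc_from_below} lower bound for $\hat c^{(t)}_{\bar\alpha}$, using $2r+a=(p+1)/2$ and $t\le1$, gives the strict inequality directly; the Sobolev-critical admissibility \eqref{eq:compact_intro} for $(t\rho_1,t\rho_2)$ is then immediate as above. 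Either way the computation is routine once the scaling bookkeeping $\dd(t\bm\rho)=t^{2r}\dd(\bm\rho)$, $(\,t\rho_1+t\rho_2)^a=t^a(\rho_1+\rho_2)^a$, $2r+a=(p+1)/2$ is in place.
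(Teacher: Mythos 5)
Your scaling bookkeeping is correct and matches the first step of the paper's argument (Lemma \ref{lem:starshap1}): $(u_1,u_2)\mapsto(\sqrt t\,u_1,\sqrt t\,u_2)$ is a bijection of the constraint sets, $\dd(t\rho_1,t\rho_2)=t^{2r}\dd(\rho_1,\rho_2)$, and $2r+a=(p+1)/2$. The gap is in the central comparison. Membership of $(\rho_1,\rho_2)$ in $A$ only guarantees $c_\alpha<\hat c_\alpha$; it does \emph{not} guarantee that $c_\alpha$ lies below the crude Gagliardo--Nirenberg lower bound of Lemma \ref{lem:hatc_from_below}, which estimates the maximum over all of $U_\alpha$ rather than $V_\alpha$ and can sit far below $\hat c_\alpha$. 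Writing $c^{(t)}_\alpha\le tT-t^{(p+1)/2}P$, with $T,P$ the kinetic and potential parts of a minimizer achieving $c_\alpha=T-P$, your target inequality reads, after dividing by $t$,
\[
\Big(T-\tfrac12(\rho_1+\rho_2)\alpha\Big)+t^{(p-1)/2}\Big(\tfrac12\dd(\rho_1,\rho_2)(\rho_1+\rho_2)^a\alpha^a-P\Big)<0 .
\]
The left-hand side is affine in $t^{(p-1)/2}$ and negative at $t=0$ (the minimizer is interior), but at $t=1$ it equals $c_\alpha$ minus the crude lower bound, which may well be positive; when the second bracket is positive the expression increases in $t$ and the inequality fails on a whole subinterval of $(0,1]$. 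Your fallback via continuity in $t$ only covers $t$ near $1$ and cannot be propagated to all of $(0,1)$ (the good set of $t$ is open but not obviously closed). Note also that a general point of $A$ need not satisfy hypothesis \eqref{eq:mainassL} of Theorem \ref{thm:existence_bad_cond}, so the explicit quantitative gap furnished by that proof is not available.

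The paper closes this with a variational step that your proposal is missing. It re-centers at $\bar\alpha:=(\rho_1+\rho_2)^{-1}\int_\Omega|\nabla\bar u_1|^2+|\nabla\bar u_2|^2<\alpha$, so that the minimizer lies on $\Ucal_{\bar\alpha}$ and, after scaling, $(s\bar u_1,s\bar u_2)$ achieves $\hat c_{\bar\alpha}(s^2\rho_1,s^2\rho_2)$ exactly (Lemma \ref{lem:starshap1}). Then, using that $(\bar u_1,\bar u_2)$ solves the Euler--Lagrange system, it constructs a mass-preserving deformation whose derivative at the minimizer decreases both the Dirichlet norm and the energy; the sign of the energy derivative is governed by $(s^2-s^{p+1})\int_\Omega\nabla\bar u_1\cdot\nabla v_1+\nabla\bar u_2\cdot\nabla v_2<0$, which is where $0<s<1$ and $p>1$ enter (Lemma \ref{lem:curvetta}). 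This yields $\hat c_{\bar\alpha-\varepsilon}(s^2\rho_1,s^2\rho_2)<\hat c_{\bar\alpha}(s^2\rho_1,s^2\rho_2)$, and Lemma \ref{lem:c<hat_c} (whose critical-case hypothesis is inherited from $\bar\alpha<\alpha$ and $s^2<1$, as you correctly observed) concludes. Without some use of the stationarity of $(\bar u_1,\bar u_2)$, the comparison of infima you propose does not close.
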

We follow a strategy inspired by \cite{ntvDCDS} but, since such paper does not extend directly
to the Sobolev critical case, we provide here a self-contained argument. In this section it is
convenient to make explicit the dependence of some quantities with respect to $\rho_1,\rho_2$: in view
of this, we write $c_\alpha(\rho_1,\rho_2)$, $\hat c_\alpha(\rho_1,\rho_2)$,
$\Bcal_\alpha(\rho_1,\rho_2)$, $\Ucal_\alpha(\rho_1,\rho_2)$.  For shorter notation, we define
\[
F(u_1,u_2):=\int_\Omega \mu_1 |u_1|^{p+1} + 2\beta |u_1|^{(p+1)/2} |u_2|^{(p+1)/2} + \mu_2 |u_2|^{p+1}
\]
and we introduce the optimization problem
\[
M_\alpha(\rho_1,\rho_2):= \sup _{\Ucal_\alpha(\rho_1,\rho_2)} F
\]
(a quantity thoroughly investigated in \cite{ntvDCDS}). Notice that
\[
\hat c_\alpha(\rho_1,\rho_2)=\frac{1}{2}\alpha(\rho_1+\rho_2)-\frac{1}{p+1} M_\alpha(\rho_1,\rho_2),
\]
and that $\hat c_\alpha(\rho_1,\rho_2)$ is achieved at $(u_1,u_2)\in \Ucal_\alpha(\rho_1,\rho_2)$ if, and only if, $M_\alpha(\rho_1,\rho_2)$ is achieved at the same pair.

Fix, if any, $(\rho_1,\rho_2)\in A\setminus\{(0,0)\}$. By definition of $A$, there exist $\alpha>\lambda_1(\Omega)$ and $(\bar u_1,\bar u_2)\in \Bcal_\alpha$, a solution of \eqref{eq:system_elliptic}, such that $\Ecal(\bar u_1,\bar u_2)=c_{\alpha}< \hat c_{\alpha}$, and $\alpha$ satisfies \eqref{eq:compact_intro} in case $N\ge3$, $p=2^*-1$.
Notice that the assumption $c_{\alpha}< \hat c_{\alpha}$ implies $\int_\Omega |\nabla \bar u_1|^2+|\nabla \bar u_2|^2<(\rho_1+\rho_2)\alpha$, so that
\[
\bar \alpha:=\frac{1}{\rho_1+\rho_2}\int_\Omega |\nabla \bar u_1|^2+|\nabla \bar u_2|^2<\alpha.
\]
As a consequence, $(\bar u_1,\bar u_2)\in \Ucal_{\bar \alpha}(\rho_1,\rho_2)$ achieves $\hat c_{\bar \alpha}
= c_\alpha$.
\begin{lemma}\label{lem:starshap1}
If $s>0$ then $(s \bar u_1,s \bar u_2)\in \Ucal_{\bar \alpha}(s^2\rho_1,s^2\rho_2)$ achieves
\[
\hat c_{\bar \alpha}(s^2 \rho_1,s^2\rho_2)=\frac{s^2}2  \bar \alpha (\rho_1+\rho_2) -
\frac{s^{p+1}}{p+1} F(\bar u_1,\bar u_2).
\]
\end{lemma}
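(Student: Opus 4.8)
The plan is to verify the claim by a direct computation based on how the constraint set $\Mcal$, the gradient norm, and the functional $F$ transform under the scaling $(u_1,u_2)\mapsto (su_1,su_2)$. First I would check membership in the constraint: since $\int_\Omega (s\bar u_i)^2 = s^2\int_\Omega \bar u_i^2 = s^2\rho_i$, the pair $(s\bar u_1,s\bar u_2)$ indeed lies in $\Mcal_{s^2\rho_1,s^2\rho_2}$. Next, the gradient norm scales the same way: $\int_\Omega |\nabla(s\bar u_i)|^2 = s^2\int_\Omega|\nabla\bar u_i|^2$, so $\int_\Omega |\nabla(s\bar u_1)|^2 + |\nabla(s\bar u_2)|^2 = s^2(\rho_1+\rho_2)\bar\alpha = (s^2\rho_1+s^2\rho_2)\bar\alpha$, which is precisely the condition defining $\Ucal_{\bar\alpha}(s^2\rho_1,s^2\rho_2)$. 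Hence $(s\bar u_1,s\bar u_2)\in \Ucal_{\bar\alpha}(s^2\rho_1,s^2\rho_2)$.

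It then remains to show that this pair actually \emph{achieves} $\hat c_{\bar\alpha}(s^2\rho_1,s^2\rho_2)$, equivalently (by the remark preceding the lemma) that it achieves $M_{\bar\alpha}(s^2\rho_1,s^2\rho_2) = \sup_{\Ucal_{\bar\alpha}(s^2\rho_1,s^2\rho_2)} F$. The key observation is that the map $(v_1,v_2)\mapsto (sv_1,sv_2)$ is a bijection between $\Ucal_{\bar\alpha}(\rho_1,\rho_2)$ and $\Ucal_{\bar\alpha}(s^2\rho_1,s^2\rho_2)$ (its inverse being division by $s$, which preserves both constraints by the same scaling identities), and that $F$ is homogeneous of degree $p+1$: $F(sv_1,sv_2)=s^{p+1}F(v_1,v_2)$, since each term in $F$ is a product of factors whose exponents sum to $p+1$. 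Therefore $M_{\bar\alpha}(s^2\rho_1,s^2\rho_2) = s^{p+1} M_{\bar\alpha}(\rho_1,\rho_2)$, and the supremum on the left is attained exactly at the images under the scaling of the maximizers on the right. Since we are given that $(\bar u_1,\bar u_2)$ achieves $\hat c_{\bar\alpha}(\rho_1,\rho_2)$, hence $M_{\bar\alpha}(\rho_1,\rho_2)$, it follows that $(s\bar u_1,s\bar u_2)$ achieves $M_{\bar\alpha}(s^2\rho_1,s^2\rho_2)$, hence $\hat c_{\bar\alpha}(s^2\rho_1,s^2\rho_2)$.

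Finally, I would compute the value: using the identity $\hat c_\alpha(\rho_1,\rho_2)=\frac12\alpha(\rho_1+\rho_2)-\frac{1}{p+1}M_\alpha(\rho_1,\rho_2)$ together with $M_{\bar\alpha}(s^2\rho_1,s^2\rho_2)=s^{p+1}F(\bar u_1,\bar u_2)$ gives
\[
\hat c_{\bar\alpha}(s^2\rho_1,s^2\rho_2) = \frac{s^2}{2}\bar\alpha(\rho_1+\rho_2) - \frac{s^{p+1}}{p+1}F(\bar u_1,\bar u_2),
\]
as claimed. There is essentially no hard step here: the only thing to be slightly careful about is confirming that the degree of homogeneity of $F$ is exactly $p+1$ in each of its three terms (the mixed term $|u_1|^{(p+1)/2}|u_2|^{(p+1)/2}$ has total degree $(p+1)/2+(p+1)/2=p+1$) and that the scaling genuinely maps $\Ucal_{\bar\alpha}$ for one mass pair onto $\Ucal_{\bar\alpha}$ for the scaled pair bijectively, so that "achieving the sup" transfers. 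This lemma will then serve as the main building block for showing that $(s^2\rho_1,s^2\rho_2)\in A$ for $s\le 1$, i.e.\ the star-shapedness of $A$.
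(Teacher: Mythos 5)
Your proposal is correct and follows essentially the same route as the paper: the scaling $(v_1,v_2)\mapsto(sv_1,sv_2)$ is a bijection between $\Ucal_{\bar\alpha}(\rho_1,\rho_2)$ and $\Ucal_{\bar\alpha}(s^2\rho_1,s^2\rho_2)$, $F$ is $(p+1)$-homogeneous, hence $M_{\bar\alpha}(s^2\rho_1,s^2\rho_2)=s^{p+1}M_{\bar\alpha}(\rho_1,\rho_2)$ and the maximizer transfers. The paper's proof is just a terser version of the same computation.
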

\begin{proof}
This follows by noticing that
\[
(u_1,u_2)\in\Ucal_{\bar\alpha}(\rho_1,\rho_2)
\qquad\iff\qquad
(su_1,su_2)\in\Ucal_{\bar\alpha}(s^2\rho_1,s^2\rho_2),
\]
with
\[
F(su_1,su_2) = s^{p+1} F(u_1,u_2).
\]
Then $M_\alpha(s^2\rho_1,s^2\rho_2) = s^{p+1}M_\alpha(\rho_1,\rho_2)$ and the lemma follows.
\end{proof}
\begin{lemma}\label{lem:curvetta}
Let $s\in (0,1)$ and $(v_1,v_2)\in H^1_0(\Omega,\R^2)$ be such that
\[
\int_\Omega \bar u_1v_1=\int_\Omega \bar u_2 v_2=0,\qquad \int_\Omega \nabla \bar u_1\cdot \nabla v_1+\nabla \bar u_2 \cdot \nabla v_2<0.
\]
Let, for $|t|$ small,
\[
(U_1(t),U_2(t)):=\left( s\sqrt{\rho_1} \frac{\bar u_1+tv_1}{\|\bar u_1+tv_1\|_2},s\sqrt{\rho_2} \frac{\bar u_2+tv_2}{\|\bar u_2+tv_2\|_2}\right).
\]
Then $(U_1(t),U_2(t))\in \Mcal_{s^2\rho_1,s^2\rho_2}$ for every $t$ and
\[
\left.\frac{d}{dt} \|(U_1(t),U_2(t)) \|^2_{H^1_0(\Omega)} \right|_{t=0}<0,\qquad \left.\frac{d}{dt} \Ecal(U_1(t),U_2(t))  \right|_{t=0}<0.
\]
\end{lemma}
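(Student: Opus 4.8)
The plan is to reduce everything to the two identities $U_i(0)=s\bar u_i$ and $U_i'(0)=sv_i$ in $H^1_0(\Omega)$, after which the two derivatives become single applications of the chain rule. To get them, I would first observe that, since $\int_\Omega\bar u_iv_i=0$ and $\|\bar u_i\|_2^2=\rho_i$, one has $\|\bar u_i+tv_i\|_2^2=\rho_i+t^2\|v_i\|_2^2\geq\rho_i>0$ for every $t\in\R$ (here I use $\rho_1,\rho_2>0$, i.e.\ $\bar u_1,\bar u_2\not\equiv0$; the case in which one mass vanishes reduces to the single-equation problem). In particular $\|U_i(t)\|_2^2\equiv s^2\rho_i$, so that $(U_1(t),U_2(t))\in\Mcal_{s^2\rho_1,s^2\rho_2}$ for all $t$. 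Setting $g_i(t):=\|\bar u_i+tv_i\|_2$, the same formula gives $g_i(0)=\sqrt{\rho_i}$ and $g_i'(0)=0$; differentiating $U_i(t)=s\sqrt{\rho_i}\,g_i(t)^{-1}(\bar u_i+tv_i)$ at $t=0$ then yields $U_i(0)=s\bar u_i$ and $U_i'(0)=sv_i$.

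The gradient term is then immediate: $\frac{d}{dt}\|(U_1(t),U_2(t))\|_{H^1_0(\Omega)}^2\big|_{t=0}=2\sum_{i=1}^2\int_\Omega\nabla U_i(0)\cdot\nabla U_i'(0)=2s^2\int_\Omega(\nabla\bar u_1\cdot\nabla v_1+\nabla\bar u_2\cdot\nabla v_2)<0$ by the third hypothesis on $(v_1,v_2)$. For the energy, $\Ecal$ is of class $C^1$ on $H^1_0(\Omega;\R^2)$ (also when $p=2^*-1$), so $\frac{d}{dt}\Ecal(U_1(t),U_2(t))\big|_{t=0}=d\Ecal(s\bar u_1,s\bar u_2)[sv_1,sv_2]$, whose kinetic part equals $s^2\int_\Omega(\nabla\bar u_1\cdot\nabla v_1+\nabla\bar u_2\cdot\nabla v_2)$. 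For the $F$-part I would use that each $\partial_{u_i}F$ is positively $p$-homogeneous, so that $dF(s\bar u_1,s\bar u_2)[sv_1,sv_2]=s^{p+1}\sum_{i=1}^2\int_\Omega\partial_{u_i}F(\bar u_1,\bar u_2)\,v_i$, together with the fact that $(\bar u_1,\bar u_2)$ solves \eqref{eq:system_elliptic} — whose $i$-th right-hand side is exactly $\tfrac{1}{p+1}\partial_{u_i}F(\bar u_1,\bar u_2)$ — so that testing the $i$-th equation against $v_i\in H^1_0(\Omega)$ and discarding $\omega_i\int_\Omega\bar u_iv_i=0$ gives $\tfrac{1}{p+1}\int_\Omega\partial_{u_i}F(\bar u_1,\bar u_2)v_i=\int_\Omega\nabla\bar u_i\cdot\nabla v_i$. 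Putting these together,
\[
\frac{d}{dt}\Ecal(U_1(t),U_2(t))\Big|_{t=0}=(s^2-s^{p+1})\int_\Omega(\nabla\bar u_1\cdot\nabla v_1+\nabla\bar u_2\cdot\nabla v_2),
\]
which is negative, since $0<s<1$ and $p>1$ force $s^2-s^{p+1}>0$ while the integral is negative by hypothesis.

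The one step needing genuine care is this last identification: matching $dF$ against the $H^1_0$-pairing through the weak formulation of \eqref{eq:system_elliptic}, and keeping track of the homogeneity factor $s^p$ against the $s^2$ coming from the kinetic term. It is exactly the resulting mismatch $s^2$ versus $s^{p+1}$ — valid precisely because $s\neq1$ (for $s=1$, $(\bar u_1,\bar u_2)$ is a constrained critical point and this derivative vanishes) — that produces the strict negativity. Everything else ($C^1$-regularity of $\Ecal$ even in the Sobolev-critical case, and smoothness of $t\mapsto(U_1(t),U_2(t))$ as a ratio with non-vanishing denominator) is routine.
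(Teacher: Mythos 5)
Your proposal is correct and follows essentially the same route as the paper's proof: compute $U_i(0)=s\bar u_i$ and $U_i'(0)=sv_i$ (using the $L^2$-orthogonality to kill the derivative of the normalizing factor), get the gradient term directly, and for the energy combine the $(p+1)$-homogeneity of $F$ with the weak formulation of \eqref{eq:system_elliptic} tested against $(v_1,v_2)$ to arrive at $(s^2-s^{p+1})\int_\Omega\nabla\bar u_1\cdot\nabla v_1+\nabla\bar u_2\cdot\nabla v_2<0$. The paper compresses the first part to ``by direct inspection''; your version merely makes those routine steps (and the degenerate case $\rho_i=0$) explicit.
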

\begin{proof}
By direct inspection we have that $(U_1(t),U_2(t))\in \Mcal_{s^2\rho_1,s^2\rho_2}$ for every $t$, and that
\[
\left.\frac{d}{dt} (U_1(t),U_2(t)) \right|_{t=0} = (sv_1,sv_2).
\]
Then
\[
\left.\frac{d}{dt} \|(U_1(t),U_2(t)) \|^2_{H^1_0(\Omega)} \right|_{t=0} =
2s^2\int_\Omega \nabla \bar u_1\cdot \nabla v_1+\nabla \bar u_2 \cdot \nabla v_2<0
\]
by assumption. On the other hand, recalling that $(\bar u_1,\bar u_2)$ solves
\eqref{eq:system_elliptic}, we have that
\[
\left.\frac{d}{dt} F(U_1(t),U_2(t))  \right|_{t=0} = s^{p+1}
F'(\bar u_1,\bar u_2)[v_1,v_2] = s^{p+1}
\int_\Omega \nabla \bar u_1\cdot \nabla v_1+\nabla \bar u_2 \cdot \nabla v_2
\]
and
\[
\left.\frac{d}{dt} \Ecal(U_1(t),U_2(t))  \right|_{t=0} =
(s^2- s^{p+1}) \int_\Omega \nabla \bar u_1\cdot \nabla v_1+\nabla \bar u_2 \cdot \nabla v_2<0,
\]
as $0<s<1$ and $p>1$.
\end{proof}
\begin{proof}[End of the proof of Proposition \ref{prop_starshaped}]
With the notation of Lemma \ref{lem:curvetta}, being
$(U_1(0),U_2(0)) = (s\bar u_1,s\bar u_2)\in\Ucal_{\bar\alpha}(s^2\rho_1,s^2\rho_2)$, there
exist positive and small constants $\varepsilon,\tau$ such that
\[
(U_1(\tau),U_2(\tau)) \in\Ucal_{\bar\alpha-\varepsilon}(s^2\rho_1,s^2\rho_2)
\]
and
\[
\hat c_{\bar\alpha-\varepsilon}(s^2\rho_1,s^2\rho_2) \le \Ecal (U_1(\tau),U_2(\tau))
< \Ecal (U_1(0),U_2(0)) = \hat c_{\bar\alpha}(s^2\rho_1,s^2\rho_2)
\]
(the last equality following by Lemma \ref{lem:starshap1}). Then we can apply Lemma
\ref{lem:c<hat_c}, with $\alpha_1 = \bar\alpha - \varepsilon$ and $\alpha_2 = \bar\alpha$,
obtaining that $(s^2\rho_1,s^2\rho_2)\in A$. Since this holds true for any $s\in(0,1)$, the
proposition follows.
\end{proof}

\begin{remark}
In \cite[Theorem 1.1]{ntvDCDS}, we show that, in the Sobolev subcritical case, $M_{\alpha}(\rho_1,\rho_2)$ is achieved and that an associated maximum point $(u_1,u_2)$ satisfies
\[
\begin{cases}
-\Delta u_1+ \omega_1 u_1=\gamma(\mu_1 u_1|u_1|^{p-1}+\beta u_1|u_1|^{(p-3)/2} |u_2|^{(p+1)/2})\\
-\Delta u_2+ \omega_2 u_2=\gamma(\mu_2 u_2|u_2|^{p-1}+\beta u_2 |u_2|^{(p-3)/2} |u_1|^{p+1/2})\\
\int_\Omega u_i^2=\rho_i, \quad i=1,2,\qquad (u_1,u_2) \in H^1_0(\Omega;\R^2).
\end{cases}
\]
for a suitable Lagrange multiplier $\gamma>0$. Then, repeating the above arguments, we obtain that
\[
\gamma>1 \text{ for some $\alpha$}  \implies  \hat c_{\alpha-\eps}<\hat c_\alpha,
\]
so that Theorem \ref{thm:existence_bad_cond} applies.
\end{remark}

\subsection{Explicit estimates for \texorpdfstring{$\dd$}{Lambda}} \label{sec:explicit_est}

At this point, the main assumption in Theorem \ref{thm:existence_bad_cond} is written in terms of the function $\Lambda(\rho_1,\rho_2)$ defined in \eqref{eq:defdd}, which we recall here for the reader's convenience
\[
\dd(\rho_1,\rho_2)=\frac{2C_{N,p}}{p+1} \max_{t\in [0,\pi/2]}
\left(\mu_1 \rho_1^{2r}\cos^{2a}t
+\mu_2 \rho_2^{2r}\sin^{2a}t
+2\beta^+ \rho_1^r\rho_2^{r}  \cos^{a}t \sin^{a}t\right),
\]
where
\[
a = \frac{N(p-1)}{4} \in \left(1,\frac{N}{N-2}\right],\qquad r =  \frac{p+1}{4} - \frac{N(p-1)}{8}\in \left[0,\frac{1}{N}\right).
\]
It is clear that $\dd$ is a $r$-homogeneous polynomial of $(\rho_1,\rho_2)$, but its explicit expression can be derived only in few particular cases. The aim of this subsection is to prove Theorem \ref{prop:supercritical} by showing that condition \eqref{eq:assnice} in Theorem \ref{prop:supercritical}, with $R=R(\Omega,N,p)$ defined as
\begin{equation}\label{eq:def_R}
R(\Omega,N,p)=\frac{p+1}{2C_{N,p}}\frac{(a-1)^{a-1}}{a^a} \lambda_j(\Omega)^{-(a-1)},
\end{equation}
implies assumption \eqref{eq:mainassL} in Theorem \ref{thm:existence_bad_cond}.
Here, as usual, $j=1$ if $\beta\geq -\sqrt{\mu_1\mu_2}$ and $j=2$ otherwise (or simply $j=2$ for any $\beta$,
in case one wants to avoid this weak dependence of $R$ on $\beta$).
The advantage of \eqref{eq:assnice} with respect to \eqref{eq:mainassL} is that of being more explicit; furthermore, the two conditions coincide in the case $\beta\leq0$, as proved in Remark \ref{rem:beta<0} below.
\begin{proof}[End of the proof of Theorem \ref{prop:supercritical}]
We proved that $A$ is star-shaped with respect to the origin in Proposition \ref{prop_starshaped}.
We estimate $\dd$ from above noticing that, as $a>1$, we have
\[
\begin{split}
\dd(\rho_1,\rho_2)\le\dd'&(\rho_1,\rho_2):=\frac{2C_{N,p}}{p+1} \max_{t\in [0,\pi/2]}
\left(\mu_1 \rho_1^{2r}\cos^{2}t
+\mu_2 \rho_2^{2r}\sin^{2}t
+2\beta^+ \rho_1^r\rho_2^{r}  \cos t \sin t\right)\\
&=\frac{C_{N,p}}{p+1} \max_{t\in [0,\pi/2]}
\left[\left(\mu_1 \rho_1^{2r}+\mu_2 \rho_2^{2r}\right) +
\left(\mu_1 \rho_1^{2r}-\mu_2 \rho_2^{2r}\right)\cos 2t
+2\beta^+ \rho_1^r\rho_2^{r}  \sin 2t\right]\\
&=\frac{C_{N,p}}{p+1} \max_{x^2+y^2=1}
\left[\left(\mu_1 \rho_1^{2r}+\mu_2 \rho_2^{2r}\right) +
\left(\mu_1 \rho_1^{2r}-\mu_2 \rho_2^{2r}\right)x
+2\beta^+ \rho_1^r\rho_2^{r}  y\right].
\end{split}
\]
Next, explicit computations show that
\[
\begin{split}
\dd'(\rho_1,\rho_2) &= \frac{C_{N,p}}{p+1}\left(\mu_1 \rho_1^{2r}+\mu_2 \rho_2^{2r} +
\sqrt{(\mu_1 \rho_1^{2r}-\mu_2 \rho_2^{2r})^2 + 4(\beta^+ \rho_1^r\rho_2^{r})^2 } \right)\\
&\le  \frac{C_{N,p}}{p+1}\left(\mu_1 \rho_1^{2r}+\mu_2 \rho_2^{2r} +
|\mu_1 \rho_1^{2r}-\mu_2 \rho_2^{2r}| + 2\beta^+ \rho_1^r\rho_2^{r}  \right)\\
&= \frac{2C_{N,p}}{p+1}\left[\max\{\mu_1 \rho_1^{2r},\mu_2 \rho_2^{2r}\} +
\beta^+ \rho_1^r\rho_2^{r}  \right].
\end{split}
\]
Therefore assumption \eqref{eq:assnice}, with $R$ as in \eqref{eq:def_R}, implies \eqref{eq:mainassL}, so that we can apply Theorem \ref{thm:existence_bad_cond} to conclude.
\end{proof}
\begin{remark}\label{rem:beta<0}
Relations \eqref{eq:assnice} and \eqref{eq:mainassL} coincide for $\beta\leq 0$. Indeed, in such case, the maximum in
the definition of $\Lambda'$ is achieved when either $t=0$ or $t=\pi/2$, and for such values
the estimate is an equality:
\[
\dd(\rho_1,\rho_2)=\dd'(\rho_1,\rho_2) = \frac{2C_{N,p}}{p+1} \max\{\mu_1 \rho_1^{2r},\mu_2 \rho_2^{2r}\} \qquad \text{ for } \beta\leq 0.
\]
\end{remark}

%%%%%

\section{Orbital stability of the set of ground states}\label{sec:stab}

This section is devoted to the proof of the stability statements, namely of Theorems \ref{prop:subcritical}-b) and \ref{thm:stab}. Our aim is to prove the stability of the sets $G$ and $G_{\bar \alpha}$ defined in the statements.
Actually, in the case of global minimizers (i.e. $1<p\le 1+4/N$), the stability follows from the conservation of the energy and masses, and from the compactness of any minimizing
sequence, see e.g. \cite[Remark 8.3.9]{Cazenave2003}. The case of local minimizers (i.e. $1+4/N < p \le 2^*-1$), however, requires an adaptation of such arguments,
in particular in the Sobolev critical case.

In order to provide a unified proof for all the cases, we first observe that global minimizers
are also local ones. Recall the definitions of $B_\alpha, \Ucal_\alpha$ in \eqref{eq:Balpha}, and those of $c_\alpha,\hat{c}_\alpha$ in  \eqref{eq:calpha}. For $p\leq 1+4/N$ and $(\rho_1, \rho_2)$ satisfying the 
assumptions of Theorem \ref{prop:subcritical}, by the inequalities \eqref{eq:importantestimate} and \eqref{eq:L^2crit_energyestimate}, we readily infer the existence of $\bar \alpha>\lambda_1(\Omega)$ such that $\{(|u_1|,|u_2|)\in H^1_0(\Omega;\R^2):\ (u_1,u_2)\in G\}\subseteq \Bcal_{\bar \alpha}$ and $\inf_{\Mcal}\Ecal=c_{\bar \alpha}< \hat c_{\bar \alpha}$; in particular, $G=G_{\bar \alpha}$.
For $p>1+4/N$ and $(\rho_1, \rho_2)\in A$, take $\bar\alpha\ge \lambda_1(\Omega)$ such that $c_{\bar\alpha}<\hat c_{\bar\alpha}$, as in the statement of Theorem \ref{thm:stab}, and satisfying moreover \eqref{eq:compact_intro} in the case $p=2^*-1$. Therefore, for the previous choice of $\bar \alpha$, we are reduced in all cases to prove the stability of the set $G_{\bar \alpha}$.

To this aim, we recall that a set $\Gcal\subset H^1_0(\Omega;\C^2)$ is orbitally stable if for every
$\eps>0$ there exists $\delta>0$ such that, whenever $(\psi_1,\psi_2)\in H^1_0(\Omega;\C^2)$ satisfies
$\dist_{H^1_0}((\psi_1,\psi_2),\Gcal)<\delta$, $\dist_{H^1_0}$ denoting the $H^1_0$--distance,
then the solution $(\Psi_{1},\Psi_{2})$ of
\[
\begin{cases}
\icomp\partial_t\Psi_1 + \Delta\Psi_1 + \Psi_1( \mu_1 |\Psi_1|^{p-1} +\beta  |\Psi_1|^{(p-3)/2}|\Psi_2|^{(p+1)/2} )=0\\
\icomp\partial_t\Psi_2 + \Delta\Psi_2 + \Psi_2( \mu_2 |\Psi_2|^{p-1} +\beta  |\Psi_2|^{(p-3)/2}|\Psi_1|^{(p+1)/2} )=0\\
\Psi_i(0,\cdot) = \psi_i(\cdot),\qquad \Psi_i(t,\cdot)\in H^1_0(\Omega;\C^2).
\end{cases}
\]
is such that
\begin{equation}\label{eq:os1}
(\Psi_1(t,\cdot),\Psi_2(t,\cdot))\text{ can be continued to a solution in } 0\leq t <+\infty
\end{equation}
and
\begin{equation}\label{eq:os2}
\sup_{t>0} \dist_{H^1_0}((\Psi_1(t,\cdot),(\Psi_2(t,\cdot)),\Gcal) <\eps,
\end{equation}

As we mentioned, we prove the orbital stability of $G_{\bar\alpha}$ under the
condition that for every $M>0$ there exists $T_0=T_0(M)$ such that if
$\|(\psi_{1},\psi_{2})\|_{H^1_0(\Omega;\C^2)}\le M$ then the above Cauchy problem
admits an unique solution on $[0,T_0)$, and that both $\Qcal$ and $\Ecal$ are preserved along the
solutions. Notice that, under these conditions, the failure of \eqref{eq:os1} implies the failure of
\eqref{eq:os2}. Indeed, if \eqref{eq:os1} does not hold, then since $T_0$ depends on the norm of the
initial data we necessarily have $\|(\Psi_{1}(t,\cdot),\Psi_{2}(t,\cdot)\|_{H^1_0(\Omega;\C^2)} \to +\infty$ as $t$
approaches a finite endpoint of the maximal existence time-interval. Since $G_{\bar\alpha}$ is bounded in $H^1_0(\Omega;\C^2)$,
\eqref{eq:os2} cannot hold.
\medbreak

We start with the following preliminary considerations.
\begin{lemma}\label{lemma:c_c'}
Let $(u_1,u_2)\in G_{\bar \alpha}$. Then there exist $\theta_1,\theta_2\in \R$ such that
$(u_1,u_2)=(e^{\icomp\theta_1} |u_1|,e^{\icomp\theta_2}|u_2|)$. In particular,
\[
\inf \left\{ \Ecal(v_1,v_2): (v_1,v_2) \in H^1_0(\Omega;\C^2), (|v_1|,|v_2|)
\in\Bcal_{\bar\alpha}\right\} = c_{\bar \alpha},
\]
while
\[
\inf \left\{ \Ecal(v_1,v_2): (v_1,v_2) \in H^1_0(\Omega;\C^2), (|v_1|,|v_2|)
\in\Ucal_{\bar\alpha}\right\} =: \tilde c_{\bar \alpha}\le \hat c_{\bar \alpha}.
\]
\end{lemma}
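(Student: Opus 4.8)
The plan is to derive the statement entirely from the diamagnetic inequality. The first step is to record that for every $v\in H^1_0(\Omega;\C)$ one has $|v|\in H^1_0(\Omega;\R)$ with $\bigl|\nabla|v|\bigr|\le|\nabla v|$ a.e.\ in $\Omega$, hence $\|\nabla|v|\|_2\le\|\nabla v\|_2$ and $\||v|\|_2=\|v\|_2$; since the nonlinear part
\[
F(v_1,v_2):=\int_\Omega \mu_1|v_1|^{p+1}+2\beta|v_1|^{(p+1)/2}|v_2|^{(p+1)/2}+\mu_2|v_2|^{p+1}
\]
of $\Ecal$ depends on $(v_1,v_2)$ only through the moduli, this gives $\Ecal(v_1,v_2)\ge\Ecal(|v_1|,|v_2|)$ for all $(v_1,v_2)\in H^1_0(\Omega;\C^2)$, with equality if and only if $\|\nabla v_i\|_2=\|\nabla|v_i|\|_2$ for $i=1,2$. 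I would also note that for a \emph{real} pair $(u_1,u_2)$ one has $\bigl|\nabla|u_i|\bigr|=|\nabla u_i|$ a.e., so passing to moduli maps $\Bcal_{\bar\alpha}$ (resp.\ $\Ucal_{\bar\alpha}$) into itself and leaves $\Ecal$ unchanged.

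Next, for the phase assertion, I would take $(u_1,u_2)\in G_{\bar\alpha}$, so that $(|u_1|,|u_2|)\in\Bcal_{\bar\alpha}$ and $\Ecal(u_1,u_2)=c_{\bar\alpha}$. Then
\[
c_{\bar\alpha}=\Ecal(u_1,u_2)\ge\Ecal(|u_1|,|u_2|)\ge\inf_{\Bcal_{\bar\alpha}}\Ecal=c_{\bar\alpha},
\]
so all inequalities are equalities and hence $\|\nabla u_i\|_2=\|\nabla|u_i|\|_2$; together with the pointwise bound this forces $\bigl|\nabla|u_i|\bigr|=|\nabla u_i|$ a.e. To conclude $u_i=e^{\icomp\theta_i}|u_i|$ I would invoke the equality case of the diamagnetic inequality: writing $u_i=f_i+\icomp g_i$ with $f_i,g_i\in H^1_0(\Omega;\R)$, Lagrange's identity gives $|\nabla u_i|^2-\bigl|\nabla|u_i|\bigr|^2=|f_i\nabla g_i-g_i\nabla f_i|^2/|u_i|^2$ on $\{u_i\neq0\}$ (both sides vanishing a.e.\ on $\{u_i=0\}$), so the equality yields $f_i\nabla g_i-g_i\nabla f_i=0$ a.e.\ in $\Omega$; since $\Omega$ is connected this means that $\arg u_i$ is (a.e.) a constant $\theta_i$, i.e.\ $u_i=e^{\icomp\theta_i}|u_i|$ (if $u_i\equiv0$ the claim is trivial for any $\theta_i$).

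For the two ``in particular'' identities I would argue by elementary comparison. If $(v_1,v_2)\in H^1_0(\Omega;\C^2)$ has $(|v_1|,|v_2|)\in\Bcal_{\bar\alpha}$, then $\Ecal(v_1,v_2)\ge\Ecal(|v_1|,|v_2|)\ge c_{\bar\alpha}$; conversely any real pair in $\Bcal_{\bar\alpha}$, regarded as a complex pair, is admissible for the left-hand infimum with unchanged energy, so that infimum equals $c_{\bar\alpha}$. The same comparison with $\Ucal_{\bar\alpha}$ in place of $\Bcal_{\bar\alpha}$ gives $\tilde c_{\bar\alpha}\ge\hat c_{\bar\alpha}$, while testing with a real $(u_1,u_2)\in\Ucal_{\bar\alpha}$ (for which $(|u_1|,|u_2|)\in\Ucal_{\bar\alpha}$ and $\Ecal(|u_1|,|u_2|)=\Ecal(u_1,u_2)$) and taking the infimum over $\Ucal_{\bar\alpha}$ gives $\tilde c_{\bar\alpha}\le\hat c_{\bar\alpha}$; only ``$\le$'' is needed for the statement.

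The only genuinely delicate point is the equality case of the diamagnetic inequality used in the second step — carefully handling the set where $u_i$ vanishes and passing from a locally constant argument to a globally constant one via connectedness of $\Omega$. Everything else amounts to bookkeeping around the inequality $\Ecal(v_1,v_2)\ge\Ecal(|v_1|,|v_2|)$.
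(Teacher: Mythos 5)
Your proof follows essentially the same route as the paper's: the diamagnetic inequality gives $\Ecal(v_1,v_2)\ge\Ecal(|v_1|,|v_2|)$, the equality case forces $u_i=e^{\icomp\theta_i}|u_i|$, and the two infimum identities follow by elementary comparison with real-valued competitors; the only difference is that you spell out the equality case via Lagrange's identity where the paper simply cites \cite[Theorem 7.21]{LiebLoss}. One small caveat (present in both arguments): connectedness of $\Omega$ alone does not upgrade a locally constant argument on $\{u_i\neq0\}$ to a globally constant one, so you implicitly need $|u_i|>0$ in $\Omega$, which holds here because $(|u_1|,|u_2|)$ is itself a minimizer lying in $\Bcal_{\bar\alpha}\setminus\Ucal_{\bar\alpha}$ (as $c_{\bar\alpha}<\hat c_{\bar\alpha}$), hence solves \eqref{eq:system_elliptic} and is positive by the maximum principle whenever $\rho_i>0$.
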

\begin{proof}
Given $(v_1,v_2)\in \Ucal_{\bar\alpha}$, we have clearly that $(v_1,v_2) \in H^1_0(\Omega;\C^2)$ and that $(|v_1|,|v_2|)
\in\Ucal_{\bar\alpha}$, so that $\tilde c_{\bar\alpha}\le \hat c_{\bar\alpha}$.

Now let $(u_1,u_2)\in G_{\bar \alpha}$. By the diamagnetic inequality \cite[Theorem 7.21]{LiebLoss}, we have
$\int_\Omega |\nabla |u_i||^2 \le \int_\Omega |\nabla u_i|^2$, for $i=1,2$, so that $c_{\bar \alpha}\le \Ecal(|u_1|,|u_2|) \le \Ecal (u_1,u_2)=c_{\bar \alpha}$. As a consequence, $\int_\Omega |\nabla |u_i||^2 = \int_\Omega |\nabla u_i|^2$, for $i=1,2$ so that equality holds in  the diamagnetic inequality, whence $u_i$ is a complex multiple of $|u_i|$, that is to say $u_i=e^{\icomp \theta_i} |u_i|$ for some $\theta_i\in \R$,
and the rest of the lemma follows.
%
%the equality holding if and only if $u_i$ is a complex multiple of $|u_i|$ (i.e., if and only if $u_i=e^{\icomp \theta_i} |u_i|$ for some $\theta_i\in \R$).
%and the rest of the lemma follows.
\end{proof}
Our general criterion for stability is the following.
\begin{proposition}\label{prop:stability}
Let $\bar \alpha$ be as above. If $c_{\bar \alpha}<\tilde c_{\bar \alpha}$, then $G_{\bar \alpha}$ is (conditionally) orbitally stable.
\end{proposition}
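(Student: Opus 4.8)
The plan is to argue by contradiction using the standard concentration–compactness/minimization scheme, adapted to the \emph{local} constraint $\Bcal_{\bar\alpha}$. Suppose $G_{\bar\alpha}$ is not orbitally stable. Then there exist $\eps_0>0$, a sequence of initial data $(\psi_{1,n},\psi_{2,n})$ with $\dist_{H^1_0}((\psi_{1,n},\psi_{2,n}),G_{\bar\alpha})\to0$, and times $t_n>0$ such that the corresponding solutions $(\Psi_{1,n}(t_n,\cdot),\Psi_{2,n}(t_n,\cdot))=:(w_{1,n},w_{2,n})$ satisfy $\dist_{H^1_0}((w_{1,n},w_{2,n}),G_{\bar\alpha})\ge\eps_0$. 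By the remark preceding Lemma \ref{lemma:c_c'}, the solutions can be continued past $t_n$ (else \eqref{eq:os2} already fails on a shorter interval), so the $t_n$ are well defined, and by conservation of $\Qcal$ and $\Ecal$ together with continuity of these functionals, we have $\|w_{i,n}\|_2^2=\rho_i+o(1)$ and $\Ecal(w_{1,n},w_{2,n})=c_{\bar\alpha}+o(1)$.

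First I would establish that, after possibly reducing $n$, the pair $(|w_{1,n}|,|w_{2,n}|)$ — or a small $H^1_0$-modification of it — lies in $\Bcal_{\bar\alpha}$ up to $o(1)$; more precisely, I would show $\|\nabla w_{1,n}\|_2^2+\|\nabla w_{2,n}\|_2^2\le(\rho_1+\rho_2)\bar\alpha+o(1)$. This is the delicate point. Since $\dist_{H^1_0}((\psi_{1,n},\psi_{2,n}),G_{\bar\alpha})\to0$ and $G_{\bar\alpha}\subset\{(|u_1|,|u_2|)\in\Bcal_{\bar\alpha}\}$, the initial data start (essentially) inside $\Bcal_{\bar\alpha}$, with $\Ecal$-value close to $c_{\bar\alpha}$. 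If along the flow the Dirichlet energy were to exceed $(\rho_1+\rho_2)\bar\alpha$, then by continuity in $t$ there would be a first time at which $(|\Psi_{1,n}|,|\Psi_{2,n}|)\in\Ucal_{\bar\alpha}$ (using the diamagnetic inequality to pass to moduli), whence $\Ecal(\Psi_{1,n},\Psi_{2,n})\ge\tilde c_{\bar\alpha}>c_{\bar\alpha}$ by Lemma \ref{lemma:c_c'} and the hypothesis $c_{\bar\alpha}<\tilde c_{\bar\alpha}$; but $\Ecal$ is conserved and equals $c_{\bar\alpha}+o(1)<\tilde c_{\bar\alpha}$ for large $n$, a contradiction. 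Hence the moduli stay in $\Bcal_{\bar\alpha}$ for all $t$, and in particular at $t=t_n$ we get $(|w_{1,n}|,|w_{2,n}|)\in\Bcal_{\bar\alpha}$ with $\Ecal(|w_{1,n}|,|w_{2,n}|)\le\Ecal(w_{1,n},w_{2,n})=c_{\bar\alpha}+o(1)$ by the diamagnetic inequality.

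Consequently $(|w_{1,n}|,|w_{2,n}|)$ is a minimizing sequence for $c_{\bar\alpha}$ in the sense of \eqref{eq:minimizing_seq}. In the Sobolev-subcritical case ($p<2^*-1$) the embedding $H^1_0\hookrightarrow L^{p+1}$ is compact and $\Bcal_{\bar\alpha}$ is weakly closed, so up to a subsequence $(|w_{1,n}|,|w_{2,n}|)\to(\bar u_1,\bar u_2)$ strongly in $H^1_0$ with $(\bar u_1,\bar u_2)\in\Bcal_{\bar\alpha}$ achieving $c_{\bar\alpha}$; in the Sobolev-critical case, since $\bar\alpha$ satisfies \eqref{eq:compact_intro}, Proposition \ref{prop:Sobcritconv} (via Proposition \ref{prop:compact_intro}) yields exactly the same strong convergence. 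Moreover the equality $\Ecal(|w_{i,n}|,|w_{i,n}|)=\Ecal(w_{1,n},w_{2,n})+o(1)$ forces equality in the diamagnetic inequality in the limit, so that $\|\nabla|w_{i,n}|\|_2^2-\|\nabla w_{i,n}\|_2^2\to0$, which upgrades to strong $H^1_0$-convergence of $(w_{1,n},w_{2,n})$ itself to a point of $G_{\bar\alpha}$ (after extracting phases $e^{\icomp\theta_{i,n}}$ as in Lemma \ref{lemma:c_c'}); this is the standard argument, cf. \cite[Remark 8.3.9]{Cazenave2003}. But then $\dist_{H^1_0}((w_{1,n},w_{2,n}),G_{\bar\alpha})\to0$, contradicting $\dist_{H^1_0}((w_{1,n},w_{2,n}),G_{\bar\alpha})\ge\eps_0$. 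This proves $G_{\bar\alpha}$ is conditionally orbitally stable. The main obstacle, as indicated, is the "trapping" step showing the flow cannot escape $\Bcal_{\bar\alpha}$ — i.e. exploiting the strict inequality $c_{\bar\alpha}<\tilde c_{\bar\alpha}$ together with the energy barrier on $\Ucal_{\bar\alpha}$ — since without it the sequence $(|w_{1,n}|,|w_{2,n}|)$ need not be admissible for the $c_{\bar\alpha}$ problem and no compactness is available.
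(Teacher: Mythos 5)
Your overall architecture is the same as the paper's: argue by contradiction, use conservation of $\Qcal$ and $\Ecal$ plus compactness of $G_{\bar\alpha}$ to control the mass and energy along the flow, use the gap $c_{\bar\alpha}<\tilde c_{\bar\alpha}$ to trap the trajectory in (an $o(1)$-neighbourhood of) $\Bcal_{\bar\alpha}$, and conclude via the compactness of minimizing sequences (Proposition \ref{prop:Sobcritconv} in the critical case). You also correctly identify the trapping step as the heart of the matter.

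There is, however, one imprecision in that step as you wrote it. For fixed $n$ the conserved masses are $\|\Psi_{i,n}(t,\cdot)\|_2^2=\|\psi_{i,n}\|_2^2=\rho_i+\delta_n$ with $\delta_n\to0$ but in general $\delta_n\neq 0$, so $(|\Psi_{1,n}(t,\cdot)|,|\Psi_{2,n}(t,\cdot)|)$ never belongs to $\Mcal_{\rho_1,\rho_2}$, hence never to $\Ucal_{\bar\alpha}$, and the pointwise lower bound $\Ecal(\Psi_{1,n},\Psi_{2,n})\ge\tilde c_{\bar\alpha}$ at a ``first crossing time'' is not available from the definition of $\tilde c_{\bar\alpha}$. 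The paper closes exactly this gap by formulating the compactness lemma (Lemma \ref{lem:stability_compact}) with \emph{approximate} masses and gradient bound, running the intermediate-value argument in $t$ only to produce times $\bar t_n$ at which the gradient norm equals $(\rho_1+\rho_2)\bar\alpha+o(1)$, and then deriving the contradiction with $c_{\bar\alpha}<\tilde c_{\bar\alpha}$ on the \emph{limit} object $(\bar u_1,\bar u_2)\in G_{\bar\alpha}$, which does have exact masses and gradient norm exactly $(\rho_1+\rho_2)\bar\alpha$. Since your proposal already invokes the approximate-mass version of the compactness statement for the final convergence, the repair is routine: replace the per-$n$ barrier argument by the paper's limiting one. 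With that modification your proof is complete and coincides with the paper's.
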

The proof is presented after the following lemma.
\begin{lemma}\label{lem:stability_compact}
Let $\bar \alpha$ be as above. Let $\{(\psi_{1,n},\psi_{2,n})\}\subset H^1_0(\Omega;\C^2)$ satisfy, as $n\to\infty$,
\begin{equation}\label{eq:stability_compact1}
\int_\Omega |\psi_{i,n}|^2 \to\rho_i \quad\text{for }i=1,2, \qquad \Ecal(\psi_{1,n},\psi_{2,n})\to c_{\bar \alpha}
\end{equation}
and, for every $n$ sufficiently large,
%\begin{equation}
%\int_\Omega |\nabla \psi_{1,n}|^2 + |\nabla \psi_{2,n}|^2 \leq (\rho_1+\rho_2)\bar\alpha.
%\end{equation}
\begin{equation}\label{eq:stability_compact2}
\int_\Omega |\nabla \psi_{1,n}|^2 + |\nabla \psi_{2,n}|^2 \leq (\rho_1+\rho_2)\bar\alpha + \text{o}(1).
\end{equation}
Then there exists $(u_1,u_2)\in G_{\bar \alpha}$ such that, up to a subsequence, $(\psi_{1,n},\psi_{2,n})\to (u_1,u_2)$, strongly in $H^1_0(\Omega;\C^2)$.
\end{lemma}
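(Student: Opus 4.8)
The plan is to obtain a weak limit through the diamagnetic inequality, reduce the claim to the strong $H^1_0$-convergence of a subsequence, and then argue separately in the $L^2$-subcritical/critical range $1<p\le 1+4/N$ and in the Sobolev-critical case $p=2^*-1$. First, by \eqref{eq:stability_compact1}--\eqref{eq:stability_compact2} the sequence $(\psi_{1,n},\psi_{2,n})$ is bounded in $H^1_0(\Omega;\C^2)$, so up to a subsequence $\psi_{i,n}\rightharpoonup u_i$ weakly in $H^1_0(\Omega;\C)$, strongly in $L^q(\Omega;\C)$ for every $q\in[2,2^*)$, and a.e.\ in $\Omega$; in particular $\|u_i\|_2^2=\rho_i$. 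By the diamagnetic inequality \cite[Theorem~7.21]{LiebLoss} and weak lower semicontinuity of $v\mapsto\|\nabla v\|_2^2$, the pair $(|u_1|,|u_2|)$ belongs to $\Mcal$ and satisfies $\|\nabla|u_1|\|_2^2+\|\nabla|u_2|\|_2^2\le\liminf_n(\|\nabla\psi_{1,n}\|_2^2+\|\nabla\psi_{2,n}\|_2^2)\le(\rho_1+\rho_2)\bar\alpha$, hence $(|u_1|,|u_2|)\in\Bcal_{\bar\alpha}$ and, using the diamagnetic inequality once more, $\Ecal(u_1,u_2)\ge\Ecal(|u_1|,|u_2|)\ge c_{\bar\alpha}$. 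Consequently, as soon as I show that $\psi_{i,n}\to u_i$ strongly in $H^1_0$ along a subsequence, it follows that $\Ecal(u_1,u_2)=\lim_n\Ecal(\psi_{1,n},\psi_{2,n})=c_{\bar\alpha}$ and hence $(u_1,u_2)\in G_{\bar\alpha}$, which is the assertion; so the core issue is the strong convergence.

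In the range $1<p\le 1+4/N$ this is immediate: since $H^1_0(\Omega)\hookrightarrow L^{p+1}(\Omega)$ compactly, the nonlinear part of $\Ecal$ passes to the limit, and combined with the weak lower semicontinuity of the quadratic part one gets $c_{\bar\alpha}=\lim_n\Ecal(\psi_{1,n},\psi_{2,n})\ge\Ecal(u_1,u_2)\ge c_{\bar\alpha}$; equality then forces $\|\nabla\psi_{i,n}\|_2\to\|\nabla u_i\|_2$ for $i=1,2$, which together with the weak convergence gives strong convergence in $H^1_0$.

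The Sobolev-critical case $p=2^*-1$ is the main obstacle, since $H^1_0\hookrightarrow L^{2^*}$ is not compact and $\Ecal$ is not weakly lower semicontinuous. Here I would run the argument of Proposition~\ref{prop:Sobcritconv}. Set $v_{i,n}=\psi_{i,n}-u_i$, so that $v_{i,n}\rightharpoonup0$ weakly in $H^1_0$ and in $L^{2^*}$, $v_{i,n}\to0$ in $L^2$ and a.e., and thus $|v_{i,n}|^{2^*/2}\rightharpoonup0$ in $L^2$. From $\|\nabla(u_i+v_{i,n})\|_2^2=\|\nabla u_i\|_2^2+\|\nabla v_{i,n}\|_2^2+o(1)$, the Brezis--Lieb lemma \cite{BL83} applied to $\psi_{i,n}$ in $L^{2^*}$ and to the complex product $\psi_{1,n}\psi_{2,n}$ in $L^{2^*/2}$ (note $|\psi_{1,n}|^{2^*/2}|\psi_{2,n}|^{2^*/2}=|\psi_{1,n}\psi_{2,n}|^{2^*/2}$, which makes the algebraic identity $\psi_{1,n}\psi_{2,n}=u_1u_2+(u_1v_{2,n}+u_2v_{1,n})+v_{1,n}v_{2,n}$ available), together with the elementary inequalities \eqref{eq:ineq1}--\eqref{eq:ineq2} --- in short, the computation of Proposition~\ref{prop:Sobcritconv} read with complex moduli --- yields the splitting
\[
\Ecal(\psi_{1,n},\psi_{2,n})=\Ecal(u_1,u_2)+\Ecal(v_{1,n},v_{2,n})+o(1).
\]
Since $\Ecal(u_1,u_2)\ge c_{\bar\alpha}$ and $\Ecal(\psi_{1,n},\psi_{2,n})\to c_{\bar\alpha}$, this gives $\Ecal(v_{1,n},v_{2,n})\le o(1)$. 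Writing $t_n:=\|\nabla v_{1,n}\|_2^2+\|\nabla v_{2,n}\|_2^2$, estimate \eqref{eq:importantestimateG} (with $r=0$, $a=2^*/2$, $C_{N,2^*-1}=S_N$; no mass constraint is needed since $r=0$) and \eqref{eq:lambda_max_def} give $\Ecal(v_{1,n},v_{2,n})\ge\frac12 t_n(1-\dd\,t_n^{(2^*-2)/2})$, while $\|\nabla u_i\|_2^2\ge\lambda_1(\Omega)\rho_i$ and \eqref{eq:stability_compact2} give $t_n\le(\rho_1+\rho_2)(\bar\alpha-\lambda_1(\Omega))+o(1)$. Feeding the latter into the (strict form of) \eqref{eq:compact_intro} keeps $\dd\,t_n^{(2^*-2)/2}$ bounded away from $1$, so that $\Ecal(v_{1,n},v_{2,n})\le o(1)$ forces $t_n\to0$, i.e.\ $\psi_{i,n}\to u_i$ strongly in $H^1_0$.

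I expect the genuinely delicate step to be the Brezis--Lieb splitting in the complex-valued setting, i.e.\ checking that the cross terms produced by the coupling nonlinearity are negligible: applying Brezis--Lieb to the complex product $\psi_{1,n}\psi_{2,n}$ --- rather than to $|\psi_{1,n}|\,|\psi_{2,n}|$, for which no pointwise decomposition is available --- is what makes the needed identity available, after which the estimates are exactly those of Proposition~\ref{prop:Sobcritconv}. A minor additional point is to make sure that the instance of \eqref{eq:compact_intro} used is strict; this can be arranged by choosing $\bar\alpha$ as in Theorem~\ref{thm:existence_bad_cond} (cf.\ \eqref{eq:passa2}), or by slightly decreasing $\bar\alpha$.
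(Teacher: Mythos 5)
Your proof is correct and follows essentially the same route as the paper's: extract a weak limit, use the diamagnetic inequality (the content of Lemma \ref{lemma:c_c'}) to get $(|u_1|,|u_2|)\in\Bcal_{\bar\alpha}$ and $\Ecal(u_1,u_2)\ge c_{\bar\alpha}$, conclude by compactness of $H^1_0\hookrightarrow L^{p+1}$ in the Sobolev-subcritical range, and rerun Proposition \ref{prop:Sobcritconv} in the critical case --- where your application of Brezis--Lieb to the complex product $\psi_{1,n}\psi_{2,n}$ (so that the pointwise decomposition is available) is precisely the ``straightforward check'' the paper leaves implicit. The only slip is the labelling of the first case as $1<p\le 1+4/N$, which omits the range $1+4/N<p<2^*-1$ where the lemma is actually invoked in Theorem \ref{thm:stab}; since your compact-embedding argument works verbatim for every $p<2^*-1$, nothing mathematical is missing.
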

\begin{proof}
By \eqref{eq:stability_compact2} there exists $(\bar\psi_1,\bar\psi_2)\in H^1_0(\Omega;\C^2)$ such that, up to a subsequence, $\psi_{i,n}\rightharpoonup u_i$ weakly in $H^1_0(\Omega;\C)$ and $\psi_{i,n}\to u_i$ in $L^2(\Omega;\C)$ for $i=1,2$, as $n\to+\infty$. Then \eqref{eq:stability_compact1}, \eqref{eq:stability_compact2} and Lemma \ref{lemma:c_c'} provide, for $i=1,2$,
\[
\int_\Omega |u_i|^2 =\rho_i, \qquad
\int_\Omega |\nabla u_1|^2 + |\nabla u_2|^2 \leq (\rho_1+\rho_2)\bar\alpha, \qquad
\Ecal(u_{1},u_{2})\ge c_{\bar \alpha}.
\]

Now, in case $p<2^*-1$, we have that $(\psi_{1,n},\psi_{2,n})\to (u_1,u_2)$ also
in $L^{p+1}(\Omega;\C^2)$. Then
\[
c_{\bar \alpha}\le \Ecal(u_{1},u_{2}) \le \liminf_{n\to+\infty} \Ecal(\psi_{1,n},\psi_{2,n}) = c_{\bar \alpha},
\]
and the strong $H^1_0$ convergence follows, together with the fact that $(u_1,u_2)\in G_{\bar \alpha}$.

On the other hand, in case $p=2^*-1$, the result follows by Proposition \ref{prop:Sobcritconv}:
actually, such proposition is stated for real valued functions, but after Lemma \ref{lemma:c_c'}
it is straightforward to check that its proof holds also for complex valued ones.
\end{proof}

\begin{proof}[Proof of Proposition \ref{prop:stability}]
Suppose by contradiction that $\{(\psi_{1,n},\psi_{2,n})\}\subset H^1_0(\Omega;\C^2)$, $(u_{1,n},u_{2,n})\in G_{\bar \alpha}$ and $\bar\eps>0$ are such that
\begin{equation}\label{eq:psi_to_u}
\lim_{n\to\infty} \|(\psi_{1,n},\psi_{2,n})-(u_{1,n},u_{2,n})\|_{H^1_0(\Omega;\C^2)}=0
\end{equation}
and
\begin{equation}
\sup_{t>0} \dist_{H^1_0}((\Psi_{1,n}(t,\cdot),\Psi_{2,n}(t,\cdot)),G_{\bar \alpha}) \geq 2\bar\eps,
\end{equation}
where $(\Psi_{1,n},\Psi_{2,n})$ is the solution of \eqref{eq:system_schro} with initial condition $(\psi_{1,n},\psi_{2,n})$. Then there exists $\{t_n\}$ such that, letting $\phi_{i,n}(x):=\Psi_{i,n}(t_n,x)$, $i=1,2$,
\begin{equation}\label{eq:stability_phi}
\dist_{H^1_0}((\phi_{1,n},\phi_{2,n}),G_{\bar \alpha}) \geq\bar\eps.
\end{equation}
Let us prove that $\{(\phi_{1,n},\phi_{2,n})\}$ satisfies \eqref{eq:stability_compact1} and \eqref{eq:stability_compact2}. Then Lemma \ref{lem:stability_compact} provides a contradicton to \eqref{eq:stability_phi}, thus concluding the proof.

By Lemma \ref{lem:stability_compact}, $G_{\bar \alpha}$ is compact. Therefore, \eqref{eq:psi_to_u} implies the existence of $(u_1,u_2)\in G_{\bar \alpha}$ such that, up to a subsequence,
\begin{equation}\label{eq:strongconvergence_stability}
 (\psi_{1,n},\psi_{2,n})\to (u_{1},u_{2}) \qquad \text{ in } H^1_0(\Omega;\C^2).
\end{equation}
This, combined with the continuity of Sobolev embeddings, implies that $(\psi_{1,n},\psi_{2,n})$ satisfies \eqref{eq:stability_compact1}. Then the conservation of the mass and of the energy imply that
\[
\int_\Omega |\phi_{i,n}|^2=\int_\Omega |\psi_{i,n}|^2\to \rho_i \text{ for } i=1,2,\quad \text{ and } \quad \Ecal(\phi_{1,n},\phi_{2,n})=\Ecal(\psi_{1,n},\psi_{2,n})\to c_{\bar \alpha},
\]
as $n\to+\infty$,
so that $(\phi_{1,n},\phi_{2,n})$ also satisfies \eqref{eq:stability_compact1}.

To conclude the proof, let us check that, at least for a subsequence, $(\phi_{1,n},\phi_{2,n})$ satisfies \eqref{eq:stability_compact2} , that is, we claim that, for $n$ sufficiently large,
\begin{equation}\label{eq:stability_contr2}
\int_\Omega |\nabla \phi_{1,n}|^2 + |\nabla \phi_{2,n}|^2 \leq(\rho_1+\rho_2) \bar \alpha + \text{o}(1)
\end{equation}
By contradiction, assume there exists $\bar n\in \N$ and $\bar \eps>0$ such that
\[
\int_\Omega |\nabla \phi_{1,n}|^2 + |\nabla \phi_{2,n}|^2 \geq (\rho_1+\rho_2)\bar \alpha +\bar \eps.
\]
Since
\begin{multline*}
\int_\Omega |\nabla \Psi_{1,n}(0,\cdot)|^2 + |\nabla \Psi_{2,n}(0,\cdot)|^2 =\int_\Omega |\nabla \psi_{1,n}|^2 + |\nabla \psi_{2,n}|^2 \\
			\leq \int_\Omega |\nabla u_{1,n}|^2 + |\nabla u_{2,n}|^2 + \text{o}(1) \leq (\rho_1+\rho_2)\bar\alpha + \text{o}(1)
\end{multline*}
for $n$ large, then there exists $\bar t_n\in (0,t_n)$ such that $(\Psi_{1,n}(\bar t_n,\cdot), \Psi_{2,n}(\bar t_n,\cdot))$ satisfies \eqref{eq:stability_compact1} and
\[
\int_\Omega |\nabla \Psi_{1,n}(\bar t_n,\cdot)|^2 + |\nabla \Psi_{2,n}(\bar t_n,\cdot)|^2 =  (\rho_1+\rho_2)\bar \alpha + \text{o}(1)
\]
and in particular \eqref{eq:stability_compact2}. By Lemma \ref{lem:stability_compact} there exists $(\bar u_1,\bar u_2)\in G_{\bar \alpha}$ such that
\[
\int_\Omega |\nabla \bar u_{1}|^2 + |\nabla \bar u_2|^2=(\rho_1+\rho_2)\bar \alpha,
\]
which contradicts the assumption $c_{\bar \alpha}<\tilde c_{\bar \alpha}$.

\end{proof}
We proved Proposition \ref{prop:stability} assuming that $c_{\bar \alpha}<\tilde c_{\bar \alpha}$. We now check that, since $c_{\bar \alpha}<\hat c_{\bar \alpha}$, this assumption is satisfied.
\begin{lemma}\label{lem:c_tilde}
Let $\bar \alpha$ be as above. Then $c_{\bar \alpha}<\tilde c_{\bar \alpha}$.
\end{lemma}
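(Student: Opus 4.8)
The plan is to prove the slightly stronger fact that $\tilde c_{\bar\alpha} = \hat c_{\bar\alpha}$; once this is in hand, the asserted strict inequality $c_{\bar\alpha}<\tilde c_{\bar\alpha}$ is immediate, since $\bar\alpha$ was chosen precisely so that $c_{\bar\alpha}<\hat c_{\bar\alpha}$. By Lemma \ref{lemma:c_c'} we already know $\tilde c_{\bar\alpha}\le \hat c_{\bar\alpha}$, so the only point to establish is the reverse inequality $\tilde c_{\bar\alpha}\ge \hat c_{\bar\alpha}$.

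For that, I would take an arbitrary competitor $(v_1,v_2)\in H^1_0(\Omega;\C^2)$ with $(|v_1|,|v_2|)\in\Ucal_{\bar\alpha}$ and compare its energy with the energy of its modulus. Since the nonlinear part of $\Ecal$ depends only on $|v_1|$ and $|v_2|$, one has $\Ecal(v_1,v_2)-\Ecal(|v_1|,|v_2|)=\tfrac12\sum_{i=1}^2\big(\|\nabla v_i\|_2^2-\|\nabla |v_i|\|_2^2\big)\ge 0$ by the diamagnetic inequality \cite[Theorem 7.21]{LiebLoss}, exactly as already used in the proof of Lemma \ref{lemma:c_c'}. Hence $\Ecal(v_1,v_2)\ge \Ecal(|v_1|,|v_2|)\ge \hat c_{\bar\alpha}$, the last inequality because $(|v_1|,|v_2|)\in\Ucal_{\bar\alpha}$ is an admissible function for the minimization problem defining $\hat c_{\bar\alpha}$. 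Taking the infimum over all such $(v_1,v_2)$ yields $\tilde c_{\bar\alpha}\ge \hat c_{\bar\alpha}$, and therefore $\tilde c_{\bar\alpha}=\hat c_{\bar\alpha}$.

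Combining the two pieces gives $c_{\bar\alpha}<\hat c_{\bar\alpha}=\tilde c_{\bar\alpha}$, which is the claim. I do not expect any real obstacle here: the statement is essentially a bookkeeping consequence of the diamagnetic inequality together with the defining property of $\bar\alpha$. The only mild subtlety worth noting explicitly is that membership in $\Ucal_{\bar\alpha}$ already encodes \emph{both} the mass constraints $\int_\Omega |v_i|^2=\rho_i$ and the gradient-norm identity, so $(|v_1|,|v_2|)$ is directly a valid competitor for $\hat c_{\bar\alpha}$ with no rescaling required.
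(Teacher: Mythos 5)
Your argument is valid only under the most literal reading of the displayed definition of $\tilde c_{\bar\alpha}$ in Lemma \ref{lemma:c_c'}, and that reading is not the one the paper actually works with. The paper's own proof of Lemma \ref{lem:c_tilde} takes a minimizing sequence subject to $\|(v_{1,n},v_{2,n})\|^2_{H^1_0(\Omega;\C^2)}=\bar\alpha(\rho_1+\rho_2)$, i.e.\ the constraint is on the gradients of the \emph{complex} functions, not on the gradients of their moduli; this is also the constraint that matters in the application (Proposition \ref{prop:stability}), where one must prevent the flow from escaping through the sphere $\int_\Omega|\nabla\Psi_1|^2+|\nabla\Psi_2|^2=(\rho_1+\rho_2)\bar\alpha$. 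Under that reading your key step collapses: for a competitor with $\int_\Omega|\nabla v_1|^2+|\nabla v_2|^2=(\rho_1+\rho_2)\bar\alpha$, the diamagnetic inequality only gives $\int_\Omega|\nabla|v_1||^2+|\nabla|v_2||^2\le(\rho_1+\rho_2)\bar\alpha$, so $(|v_1|,|v_2|)$ lands in $\Bcal_{\bar\alpha}$ but in general \emph{not} in $\Ucal_{\bar\alpha}$, and the only bound available for free is $\Ecal(v_1,v_2)\ge\Ecal(|v_1|,|v_2|)\ge c_{\bar\alpha}$ --- precisely the inequality without the strictness the lemma asserts. The real content of the lemma is to exclude minimizing sequences that put a non-vanishing amount of gradient energy into the phase, and this is what the paper's compactness argument does: assuming $\tilde c_{\bar\alpha}=c_{\bar\alpha}$, it applies Lemma \ref{lem:stability_compact} to both $(v_{1,n},v_{2,n})$ and $(|v_{1,n}|,|v_{2,n}|)$, uses $\Ecal(v_{1,n},v_{2,n})-\Ecal(|v_{1,n}|,|v_{2,n}|)\to0$ to conclude that the two strong limits have the same gradient norm $(\rho_1+\rho_2)\bar\alpha$, and thereby produces an element of $\Ucal_{\bar\alpha}$ at energy $c_{\bar\alpha}<\hat c_{\bar\alpha}$, a contradiction.

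To be fair, if one insists on the definition exactly as typeset (the condition $(|v_1|,|v_2|)\in\Ucal_{\bar\alpha}$), your two lines do prove the statement and in fact give $\tilde c_{\bar\alpha}=\hat c_{\bar\alpha}$; but then the lemma becomes an empty repackaging of $c_{\bar\alpha}<\hat c_{\bar\alpha}$, and the nontrivial step is merely displaced into the final contradiction of Proposition \ref{prop:stability}. You clearly sensed the crux --- your closing remark that membership in $\Ucal_{\bar\alpha}$ ``already encodes the gradient-norm identity'' is exactly the point at issue --- but you resolved it by appealing to the letter of an ambiguously stated definition rather than to the estimate the stability argument actually needs. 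A proof of the intended statement must handle the complex-gradient constraint, and for that some compactness input (Lemma \ref{lem:stability_compact}, hence Proposition \ref{prop:Sobcritconv} in the Sobolev-critical case) is unavoidable.
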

\begin{proof}
If by contradiction $\tilde c=c$, then there exists $\eps_n\to 0$ and  $(v_{1,n},v_{2,n})\in H^1_0(\Omega;\C^2)$ such that
\[
\|(v_{1,n},v_{2,n})\|^2_{H^1_0(\Omega;\C^2)}=\bar\alpha(\rho_1+\rho_2),
\qquad
\int_\Omega |v_{i,n}|^2=\rho_i,
\qquad
c_{\bar \alpha}\leq \Ecal(v_{1,n},v_{2,n})\leq c_{\bar \alpha}+\eps_n,
\]
for every $n$, $i=1,2$. Letting $u_{i,n}:=|v_{i,n}|$, $i=1,2$, the diamagnetic inequality implies
\begin{equation}\label{eq:c_tilde1}
\|(u_{1,n},u_{2,n})\|^2_{H^1_0(\Omega;\R^2)} \leq
\|(v_{1,n},v_{2,n})\|^2_{H^1_0(\Omega;\C^2)}=(\rho_1+\rho_2)\bar\alpha,
\end{equation}
so that $(u_{1,n},u_{2,n})$ is an admissible couple for the minimization problem $c$ and then
\begin{equation}\label{eq:c_tilde2}
c_{\bar \alpha}\leq \Ecal(u_{1,n},u_{2,n})\leq \Ecal(v_{1,n},v_{2,n})\leq c_{\bar \alpha}+\eps_n.
\end{equation}
In particular,
\begin{equation}\label{eq:c_tilde3}
\frac{1}{2}\left(\|(v_{1,n},v_{2,n})\|^2_{H^1_0(\Omega;\C^2)}-\|(u_{1,n},u_{2,n})\|^2_{H^1_0(\Omega;\R^2)}\right)= \Ecal(v_{1,n},v_{2,n})- \Ecal(u_{1,n},u_{2,n})\leq \eps_n.
\end{equation}
Then Lemma \ref{lem:stability_compact} applies to both sequences, yielding both
$(v_{1,n},v_{2,n})\to (v_{1,\infty},v_{2,\infty})$ and
$(u_{1,n},u_{2,n})\to (u_{1,\infty},u_{2,\infty})$, strongly in $H^1_0$. Passing to the limit
in  \eqref{eq:c_tilde2} and \eqref{eq:c_tilde3}, we infer
\[
\Ecal(u_{1,\infty},u_{2,\infty})= c_{\bar \alpha}
\qquad\text{and}\qquad
\|(u_{1,\infty},u_{2,\infty})\|^2_{H^1_0(\Omega;\R^2)}=
\|(v_{1,\infty},v_{2,\infty})\|^2_{H^1_0(\Omega;\C^2)}=\bar \alpha(\rho_1+\rho_2).
\]
Then $(u_{1,\infty},u_{2,\infty})\in \Ucal_{\bar\alpha}$, contradicting the fact that $c_{\bar \alpha}<\hat c_{\bar \alpha}$.
\end{proof}
\begin{proof}[End of the proof of Theorems \ref{prop:compact_intro}-b) and \ref{thm:stab}] Recalling the first paragraph of this section, we have to prove that the set $G_{\bar \alpha}$ is (conditionally) orbitally stable. This is a direct consequence of Proposition \ref{prop:stability} together with Lemma \ref{lem:c_tilde}.
\end{proof}

%%%%%

\section{Asymptotic study as \texorpdfstring{$\beta\to -\infty$}{β→−∞}}\label{sec:segregation}

In this section we prove Theorem \ref{thm_beta-infty}. Let $\mu_1,\mu_2>0$, and take $\rho_1,\rho_2>0$ satisfying
\begin{equation}\label{eq:uniform_beta_proof}
\begin{cases}
\rho_1,\rho_2>0 & \quad \text{ if }1<p<1+4/N, \\ \smallbreak
0<\mu_1\rho_1^\frac{2}{N},\mu_2\rho_2^\frac{2}{N}<\frac{N+2}{NC_{N}}  &\quad \text{ if } p=1+\frac{4}{N}\\ \smallbreak
\max\{ \mu_1 \rho_1^{2r},\mu_2 \rho_2^{2r}\}  \cdot (\rho_1+\rho_2)^{a-1} \le \frac{(a-1)^{a-1}}{a^a}
\lambda_2(\Omega)^{-(a-1)} &\quad  \text{ if }1+\frac{4}{N}<p\leq 2^*-1
\end{cases}
\end{equation}
Observe that all these conditions are independent from $\beta$.  Combining Theorems \ref{prop:subcritical}, \ref{prop:supercritical} and \ref{thm:existence_bad_cond} (see also Remarks \ref{rem:uniform_beta1} and \ref{rem:uniform_beta2}) with the definition of $R$ in \eqref{eq:def_R}, we deduce that, given $\beta<0$, there exist positive functions $u_{1,\beta}, u_{2,\beta}$ and $\omega_{1,\beta},\omega_{2,\beta}\in \R$  such that 
\begin{equation}\label{eq:system_elliptic_beta}
\begin{cases}
-\Delta u_{1,\beta}+ \omega_{1,\beta} u_{1,\beta}=\mu_1 u_{1,\beta}^{p}+\beta u_{1,\beta}^{(p-1)/2} u_{2,\beta}^{(p+1)/2}\\
-\Delta u_{2,\beta}+ \omega_{2,\beta} u_{2,\beta}=\mu_2 u_{2,\beta}^{p}+\beta u_{2,\beta}^{(p-1)/2} u_{1,\beta}^{(p+1)/2}\\
\int_\Omega u_i^2=\rho_i, \quad i=1,2,\\
(u_1,u_2) \in H^1_0(\Omega;\R^2)
\end{cases}
\end{equation}
while
\begin{align}
&\Ecal(u_{1,\beta},u_{2,\beta})=\inf_\Mcal \Ecal  & \text{ if } 1<p\leq 1+\frac{4}{N}\label{eq_uniformalphabar}\\
&\Ecal(u_{1,\beta},u_{2,\beta})=\inf_{\Bcal_{\bar \alpha}} \Ecal,\quad  (u_{1,\beta},u_{2,\beta})\in \Bcal_{\bar \alpha}\setminus \Ucal_{\bar \alpha} &  \text{ if } 1+\frac{4}{N}< p\leq 2^*-1 \label{eq_uniformalphabar2}
\end{align}
where $\bar \alpha:=\frac{a}{a-1}\lambda_2(\Omega)$ in \eqref{eq_uniformalphabar2}.

\begin{lemma}\label{lemma_betainfty_aux}
Under the previous assumptions, there exists a constant $C>0$, independent of $\beta$, such that
\[
\|u_{i,\beta}\|_{H^1_0(\Omega)} + \|u_{i,\beta}\|_{L^\infty(\Omega)} + |\omega_{i,\beta}| \leq C \qquad \text{ for every $\beta<0$, $i=1,2$.}
\]
\end{lemma}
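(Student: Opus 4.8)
The plan is to prove the three bounds in the order stated, using throughout that $\beta<0$ gives the coupling terms --- both in the energy $\Ecal$ and in the equations --- a favourable sign.

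\emph{Step 1 (bound on $\|u_{i,\beta}\|_{H^1_0}$ and on the energy).} If $1+4/N<p\le 2^*-1$ then, by \eqref{eq_uniformalphabar2}, $(u_{1,\beta},u_{2,\beta})\in\Bcal_{\bar\alpha}$ with $\bar\alpha=\frac{a}{a-1}\lambda_2(\Omega)$ \emph{fixed}, so $\|\nabla u_{1,\beta}\|_2^2+\|\nabla u_{2,\beta}\|_2^2\le(\rho_1+\rho_2)\bar\alpha$ at once. If $1<p\le 1+4/N$, then $(u_{1,\beta},u_{2,\beta})$ is a global minimizer of $\Ecal$ on $\Mcal$, and the coercivity of $\Ecal$ on $\Mcal$ --- which, by \eqref{eq:importantestimate} and \eqref{eq:L^2crit_energyestimate} read with $\beta^+=0$, holds with $\beta$-independent constants (using $a<1$ when $p<1+4/N$, resp.\ the positive definiteness of the diagonal matrix granted by \eqref{eq:uniform_beta_proof} when $p=1+4/N$) --- combined with the $\beta$-independent upper bound $\inf_\Mcal\Ecal\le\Ecal\bigl(\sqrt{\rho_1}\,\varphi_2^+/\|\varphi_2^+\|_2,\sqrt{\rho_2}\,\varphi_2^-/\|\varphi_2^-\|_2\bigr)$ (the components have disjoint supports, so the cross term vanishes; cf.\ Lemma \ref{lem:estimate_c}), again yields a uniform $H^1_0$ bound. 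In either case the same disjointly supported competitor lies in the relevant sublevel set and gives a uniform upper bound on $\Ecal(u_{1,\beta},u_{2,\beta})$, while \eqref{eq:importantestimate} with $\beta^+=0$ together with the $H^1_0$ bound gives a uniform lower bound; hence $\Ecal(u_{1,\beta},u_{2,\beta})$ is bounded, above and below, uniformly in $\beta$.

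\emph{Step 2 (bound on $\omega_{i,\beta}$).} Testing the $i$-th equation of \eqref{eq:system_elliptic_beta} with $u_{i,\beta}$ gives
\[
\omega_{i,\beta}\,\rho_i = -\|\nabla u_{i,\beta}\|_2^2 + \mu_i\|u_{i,\beta}\|_{p+1}^{p+1} + \beta\int_\Omega u_{1,\beta}^{(p+1)/2}u_{2,\beta}^{(p+1)/2},
\]
whose first two terms are uniformly bounded by Step 1 and the Gagliardo--Nirenberg (or Sobolev, if $p=2^*-1$) inequality. For the last one, rewrite the energy identity as
\begin{multline*}
\frac{2|\beta|}{p+1}\int_\Omega u_{1,\beta}^{(p+1)/2}u_{2,\beta}^{(p+1)/2} = \Ecal(u_{1,\beta},u_{2,\beta}) \\
-\frac12\bigl(\|\nabla u_{1,\beta}\|_2^2+\|\nabla u_{2,\beta}\|_2^2\bigr) +\frac{1}{p+1}\bigl(\mu_1\|u_{1,\beta}\|_{p+1}^{p+1}+\mu_2\|u_{2,\beta}\|_{p+1}^{p+1}\bigr);
\end{multline*}
by Step 1 the right-hand side is bounded uniformly in $\beta$, while the left-hand side is nonnegative, so $|\beta|\int_\Omega u_{1,\beta}^{(p+1)/2}u_{2,\beta}^{(p+1)/2}\le C$ and therefore $|\omega_{i,\beta}|\le C/\rho_i$.

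\emph{Step 3 (bound on $\|u_{i,\beta}\|_{L^\infty}$).} Since $\beta<0$ and $u_{1,\beta},u_{2,\beta}\ge0$, the coupling term in the first equation of \eqref{eq:system_elliptic_beta} is nonpositive, so $u_{1,\beta}$ is a nonnegative weak subsolution of $-\Delta w\le a_\beta(x)\,w$ in $\Omega$, $w\in H^1_0(\Omega)$, with $a_\beta:=|\omega_{1,\beta}|+\mu_1 u_{1,\beta}^{p-1}$ (and symmetrically for $u_{2,\beta}$). As $p-1\le 2^*-2$, one has $u_{1,\beta}^{p-1}\in L^{N/2}(\Omega)$ with norm controlled by $\|u_{1,\beta}\|_{2^*}^{p-1}$, so by Steps 1--2 the quantity $\|a_\beta\|_{L^{N/2}(\Omega)}$ is bounded uniformly in $\beta$. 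When $p<2^*-1$ the exponent $N/2$ is not attained, and a Brezis--Kato / Moser iteration produces $\|u_{i,\beta}\|_{L^q(\Omega)}\le C$ for every finite $q$; feeding this into classical $L^q$ elliptic estimates for \eqref{eq:system_elliptic_beta} yields the uniform $L^\infty$ bound. When $p=2^*-1$ the exponent is exactly $N/2$ --- the delicate point --- and one instead runs a truncated Moser iteration for critical growth, in which the part $\mu_1\|u_{1,\beta}^{2^*-2}\mathbf{1}_{\{u_{1,\beta}>T\}}\|_{L^{N/2}}$ must be absorbed on the left-hand side; this absorption can be carried out uniformly in $\beta$ because, by Step 1 and \eqref{eq:uniform_beta_proof}, the quantity $\|\nabla u_{i,\beta}\|_2^2\le(\rho_1+\rho_2)\bar\alpha$ stays below the required threshold. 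One again obtains uniform $L^q$ bounds for all finite $q$, hence a uniform $L^\infty$ bound by elliptic regularity. I expect this last step, in the Sobolev-critical case, to be the main obstacle; the $H^1_0$ and Lagrange-multiplier bounds are routine consequences of the variational characterization of $(u_{1,\beta},u_{2,\beta})$ and of the sign of $\beta$.
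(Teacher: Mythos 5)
Your proposal follows essentially the same route as the paper's proof: a disjointly supported competitor gives a $\beta$-uniform upper bound on the energy; coercivity with $\beta^+=0$ (for $p\le 1+4/N$) or membership in $\Bcal_{\bar\alpha}$ with $\bar\alpha$ fixed (for $p>1+4/N$) gives the $H^1_0$ bound; the energy identity bounds $|\beta|\int_\Omega(u_{1,\beta}u_{2,\beta})^{(p+1)/2}$, which combined with testing the equations bounds the multipliers; and a Brezis--Kato--Moser iteration gives the $L^\infty$ bound. The only divergence is in the last step, which the paper delegates entirely to \cite{MR2928850} while you sketch it: be aware that your stated reason for uniformity in $\beta$ when $p=2^*-1$ (that $\|\nabla u_{i,\beta}\|_2^2\le(\rho_1+\rho_2)\bar\alpha$ stays ``below the required threshold'') is not by itself the right mechanism --- a uniform $H^1_0$ bound controls the $L^{N/2}$ norm of the coefficient $u_{i,\beta}^{2^*-2}$ but not the equi-integrability needed to absorb the truncated tail uniformly over the family --- though this is precisely the technical point the paper also outsources to the cited reference.
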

\begin{proof}

 Take  $(\xi_1,\xi_2)\in \Mcal$ (if $p\leq 1+\frac{1}{N}$) or $(\xi_1,\xi_2)\in \Bcal_{\bar \alpha}$ (if $p>1+\frac{4}{N}$), with $\xi_1\cdot \xi_2\equiv 0$ in either case. From \eqref{eq_uniformalphabar}--\eqref{eq_uniformalphabar2} we have
\begin{align}\label{eq:energybound}
\Ecal(u_{1,\beta},u_{2,\beta})\leq \Ecal(\xi_1,\xi_2)=\frac{1}{2}\int_\Omega (|\nabla \xi_{1}|^2+|\nabla \xi_{2}|^2)-\frac{1}{p+1}\int_\Omega (\mu_1|\xi_1|^{p+1}+\mu_2 |\xi_2|^{p+1})=:C_1
\end{align}
where $C_1$ is independent of $\beta<0$.

 From the first statement in \eqref{eq_uniformalphabar2} we deduce that $\{(u_{1,\beta},u_{2,\beta})\}_{\beta<0}$ is uniformly bounded in $H^1_0(\Omega)$ for $p> 1+\frac{4}{N}$. In case $1<p<1+\frac{4}{N}$, the $H^1_0$-boundedness follows combining  \eqref{eq:energybound} with the estimate  \[
\Ecal(u_{1,\beta},u_{2,\beta}) \geq
\|\nabla u_{1,\beta}\|_2^{2a} \left( \frac{1}{2} \|\nabla u_{1,\beta}\|_{2}^{2-2a} -\frac{C_{N,p}}{p+1}{\mu_1\rho_1^{2r}}\right) +
\|\nabla u_{2,\beta}\|_2^{2a} \left( \frac{1}{2} \|\nabla u_{2,\beta}\|_{2}^{2-2a} -\frac{C_{N,p}}{p+1}{\mu_2\rho_2^{2r}}\right)
\]
($a:=N(p-1)/4<1$), which corresponds to \eqref{eq:importantestimate} for $\beta<0$, while for $p=1+\frac{4}{N}$ it follows from  \eqref{eq:energybound} and
\[
\Ecal(u_{1,\beta},u_{2,\beta}) \geq  \frac{1}{2}\left(1-\frac{NC_N \mu_1\rho_1^{2/N}}{N+2}\right)\|\nabla u_{1,\beta}\|_2^2 +  \frac{1}{2}\left(1-\frac{NC_N \mu_2\rho_2^{2/N}}{N+2}\right)\|\nabla u_{2,\beta}\|_2^2.
\]
(see \eqref{eq:L^2crit_energyestimate} with $\beta<0$).

By the Sobolev embedding $H^1_0(\Omega)\hookrightarrow L^{p+1}(\Omega)$, we have that $\{(u_{1,\beta},u_{2,\beta})\}_{\beta<0}$ is uniformly bounded in the $L^{p+1}$-norm. In particular,
\begin{align*}
0\leq \frac{2(-\beta)}{p+1}\int_\Omega (u_{1,\beta} u_{2,\beta})^{(p+1)/2} \leq \Ecal(u_{1,\beta},u_{2,\beta}) + \frac{1}{p+1} \int_\Omega \mu_1 u_{1,\beta}^{p+1}+\mu_2 u_{2,\beta}^{p+1}\leq C_2.
\end{align*}
By testing the first equation in \eqref{eq:system_elliptic_beta} by $u_{1,\beta}$ and the second one by $u_{2,\beta}$, and usign the previous estimates, we have, for $i,j\in \{1,2\}$, $i\neq j$,
\begin{multline*}
\rho_i|\omega_{i,\beta}|=\left|\int_\Omega \mu_i u_{i,\beta}^{p+1}+\beta (u_{1,\beta}u_{2,\beta})^{(p+1)/2}- |\nabla u_{i,\beta}|^2\right| \\
\leq \int_\Omega \mu_i u_{i,\beta}^{p+1}+|\beta| (u_{1,\beta}u_{2,\beta})^{(p+1)/2}+ |\nabla u_{i,\beta}|^2 \le C_3.
\end{multline*}
Now we can use a  Brezis-Kato-Moser type argument exactly as in  \cite[pp. 1264--1265]{MR2928850}, obtaining   uniform $L^\infty$--bounds for $\{(u_{1,\beta},u_{2,\beta})\}_{\beta<0}$.
\end{proof}

\begin{proof}[Proof of Theorem \ref{thm_beta-infty}]
By Lemma \ref{lemma_betainfty_aux}, $\{(u_{1,\beta},v_{2,\beta})\}_{\beta<0}$ satisfies the assumptions of  \cite[Theorems 1.3 and 1.5]{STTZ}. Therefore this sequence is uniformly bounded in $C^{0,\alpha}(\overline \Omega)$ for every $0<\alpha<1$, and there exist $(u_1,u_2)\in C^{0,1}(\overline \Omega)$ with $u_1,u_2\geq 0$ in $\Omega$,  and $(\omega_1,\omega_2)\in \R^2$ such that, up to subsequences, as $\beta\to -\infty$ we have
\[
u_{i,\beta}\to u_i \text{ in } C^{0,\alpha}(\overline \Omega)\cap H^1_0(\Omega), \qquad \omega_i \to \omega_i.
\]
(see also \cite{MR2599456, SZ15}).  By \cite[Theorem 1.2]{DWZ} (which is stated for $p=3$, but holds also for a general $p$ without any extra efford), we have
\[
-\Delta (u_1-u_2) + \omega_1 u_1-\omega_2 u_2 \geq \mu_1 u_1^{p} - \mu_2 u_2^p\quad \text{ and }\quad -\Delta (u_2-u_1) + \omega_2 u_2-\omega u_1 \geq \mu_2 u_2^p-\mu_1 u_1^p \quad \text{ in } \Omega.
\]
We can now conclude by taking $w:=u_1-u_2$.\end{proof}

\section*{Acknowledgments} All authors are partially supported by the project ERC Advanced Grant  2013 n. 339958:
``Complex Patterns for Strongly Interacting Dynamical Systems - COMPAT''.
H. Tavares is partially supported by FCT (Portugal) grant UID/MAT/04561/2013. G. Verzini is partially supported  by the PRIN-2015KB9WPT Grant: ``Variational methods, with applications to problems in mathematical physics and geometry''. B. Noris and G. Verzini are  partially supported  by  the INDAM-GNAMPA group.

\medskip
\small
\begin{flushright}
\noindent \verb"benedetta.noris@u-picardie.fr"\\
Laboratoire Ami\'enois de Math\'ematique Fondamentale et Appliqu\'ee\\
Universit\'e de Picardie Jules Verne\\
33 Rue Saint-Leu, 80039 Amiens (France)
\end{flushright}
\small
\begin{flushright}
\noindent \verb"hrtavares@fc.ul.pt"\\
CMAFCIO \& Departamento de Matem\'atica\\
Faculdade de Ci\^encias da Universidade de Lisboa\\
Edif\'icio C6, Piso 1, Campo Grande 1749--016 Lisboa (Portugal)
\end{flushright}
\small
\begin{flushright}
\noindent \verb"gianmaria.verzini@polimi.it"\\
Dipartimento di Matematica, Politecnico di Milano\\
Piazza Leonardo da Vinci 32, 20133 Milano (Italy)
\end{flushright}

\begin{thebibliography}{10}

\bibitem{agrawal2000}
G.~P. Agrawal.
\newblock {\em Nonlinear fiber optics}.
\newblock Springer, 2000.

\bibitem{MR2302730}
A.~Ambrosetti and E.~Colorado.
\newblock Standing waves of some coupled nonlinear {S}chr{\"o}dinger equations.
\newblock {\em J. Lond. Math. Soc. (2)}, 75(1):67--82, 2007.

\bibitem{AmbrosettiProdiBook}
A.~Ambrosetti and G.~Prodi.
\newblock {\em A primer of nonlinear analysis}, volume~34 of {\em Cambridge
  Studies in Advanced Mathematics}.
\newblock Cambridge University Press, Cambridge, 1993.

\bibitem{MR3777573}
T.~Bartsch and L.~Jeanjean.
\newblock Normalized solutions for nonlinear {S}chr\"odinger systems.
\newblock {\em Proc. Roy. Soc. Edinburgh Sect. A}, 148(2):225--242, 2018.

\bibitem{MR3539467}
T.~Bartsch, L.~Jeanjean, and N.~Soave.
\newblock Normalized solutions for a system of coupled cubic {S}chr\"odinger
  equations on {$\mathbb{R}^3$}.
\newblock {\em J. Math. Pures Appl. (9)}, 106(4):583--614, 2016.

\bibitem{2017arXiv170302832B}
T.~{Bartsch} and N.~{Soave}.
\newblock {Multiple normalized solutions for a competing system of
  Schr\"odinger equations}.
\newblock {\em ArXiv e-prints}, (arXiv:1703.02832), 2017.

\bibitem{MR3639521}
T.~Bartsch and N.~Soave.
\newblock A natural constraint approach to normalized solutions of nonlinear
  {S}chr\"odinger equations and systems.
\newblock {\em J. Funct. Anal.}, 272(12):4998--5037, 2017.

\bibitem{MR2252973}
T.~Bartsch and Z.-Q. Wang.
\newblock Note on ground states of nonlinear {S}chr{\"o}dinger systems.
\newblock {\em J. Partial Differential Equations}, 19(3):200--207, 2006.

\bibitem{MR3638314}
J.~Bellazzini, N.~Boussa\"id, L.~Jeanjean, and N.~Visciglia.
\newblock Existence and stability of standing waves for supercritical {NLS}
  with a partial confinement.
\newblock {\em Comm. Math. Phys.}, 353(1):229--251, 2017.

\bibitem{BonheureJeanjeanNoris}
D.~Bonheure, L.~Jeanjean, and B.~Noris.
\newblock Orbital stability of the ground states for the nonlinear
  {S}chr\"odinger equation with harmonic potential.
\newblock {\em In preparation}, 2018.

\bibitem{BL83}
H.~Br{\'e}zis and E.~H. Lieb.
\newblock A relation between pointwise convergence of functions and convergence
  of functionals.
\newblock {\em Proc. Amer. Math. Soc.}, 88(3):486--490, 1983.

\bibitem{MR709644}
H.~Br\'ezis and L.~Nirenberg.
\newblock Positive solutions of nonlinear elliptic equations involving critical
  {S}obolev exponents.
\newblock {\em Comm. Pure Appl. Math.}, 36(4):437--477, 1983.

\bibitem{Cazenave2003}
T.~Cazenave.
\newblock {\em Semilinear {S}chr\"odinger equations}, volume~10 of {\em Courant
  Lecture Notes in Mathematics}.
\newblock New York University Courant Institute of Mathematical Sciences, New
  York, 2003.

\bibitem{MR2090357}
S.-M. Chang, C.-S. Lin, T.-C. Lin, and W.-W. Lin.
\newblock Segregated nodal domains of two-dimensional multispecies
  {B}ose-{E}instein condensates.
\newblock {\em Phys. D}, 196(3-4):341--361, 2004.

\bibitem{MR3660463}
M.~Cirant and G.~Verzini.
\newblock Bifurcation and segregation in quadratic two-populations mean field
  games systems.
\newblock {\em ESAIM Control Optim. Calc. Var.}, 23(3):1145--1177, 2017.

\bibitem{MR1939088}
M.~Conti, S.~Terracini, and G.~Verzini.
\newblock Nehari's problem and competing species systems.
\newblock {\em Ann. Inst. H. Poincar\'e Anal. Non Lin\'eaire}, 19(6):871--888,
  2002.

\bibitem{DWZ}
E.~N. Dancer, K.~Wang, and Z.~Zhang.
\newblock The limit equation for the {G}ross-{P}itaevskii equations and {S}.
  {T}erracini's conjecture.
\newblock {\em J. Funct. Anal.}, 262(3):1087--1131, 2012.

\bibitem{MR2629888}
E.~N. Dancer, J.~Wei, and T.~Weth.
\newblock A priori bounds versus multiple existence of positive solutions for a
  nonlinear {S}chr{\"o}dinger system.
\newblock {\em Ann. Inst. H. Poincar{\'e} Anal. Non Lin{\'e}aire},
  27(3):953--969, 2010.

\bibitem{FibichMerle2001}
G.~Fibich and F.~Merle.
\newblock Self-focusing on bounded domains.
\newblock {\em Phys. D}, 155(1-2):132--158, 2001.

\bibitem{Fukuizumi2012}
R.~Fukuizumi, F.~H. Selem, and H.~Kikuchi.
\newblock Stationary problem related to the nonlinear schr{\"o}dinger equation
  on the unit ball.
\newblock {\em Nonlinearity}, 25(8):2271, 2012.

\bibitem{MR3534090}
T.~Gou and L.~Jeanjean.
\newblock Existence and orbital stability of standing waves for nonlinear
  {S}chr\"odinger systems.
\newblock {\em Nonlinear Anal.}, 144:10--22, 2016.

\bibitem{0951-7715-31-5-2319}
T.~Gou and L.~Jeanjean.
\newblock Multiple positive normalized solutions for nonlinear schr{\"o}dinger
  systems.
\newblock {\em Nonlinearity}, 31(5):2319, 2018.

\bibitem{GrillakisShatahStrauss}
M.~Grillakis, J.~Shatah, and W.~Strauss.
\newblock Stability theory of solitary waves in the presence of symmetry, i.
\newblock {\em Journal of Functional Analysis}, 74(1):160--197, 1987.

\bibitem{MR1430506}
L.~Jeanjean.
\newblock Existence of solutions with prescribed norm for semilinear elliptic
  equations.
\newblock {\em Nonlinear Anal.}, 28(10):1633--1659, 1997.

\bibitem{Kwong1989}
M.~K. Kwong.
\newblock Uniqueness of positive solutions of {$\Delta u-u+u^p=0$} in {${\bf
  R}^n$}.
\newblock {\em Arch. Rational Mech. Anal.}, 105(3):243--266, 1989.

\bibitem{LiebLoss}
E.~H. Lieb and M.~Loss.
\newblock {\em Analysis}, volume~14 of {\em Graduate Studies in Mathematics}.
\newblock American Mathematical Society, Providence, RI, second edition, 2001.

\bibitem{MR2135447}
T.-C. Lin and J.~Wei.
\newblock Ground state of {$N$} coupled nonlinear {S}chr\"odinger equations in
  {$\mathbf{R}^n$}, {$n\leq 3$}.
\newblock {\em Comm. Math. Phys.}, 255(3):629--653, 2005.

\bibitem{MR2358296}
T.-C. Lin and J.~Wei.
\newblock Erratum: ``{G}round state of {$N$} coupled nonlinear {S}chr\"odinger
  equations in {${\bf R}^n$}, {$n\leq3$}'' [{C}omm. {M}ath. {P}hys. {\bf 255}
  (2005), no. 3, 629--653; mr2135447].
\newblock {\em Comm. Math. Phys.}, 277(2):573--576, 2008.

\bibitem{MR2263573}
L.~A. Maia, E.~Montefusco, and B.~Pellacci.
\newblock Positive solutions for a weakly coupled nonlinear {S}chr{\"o}dinger
  system.
\newblock {\em J. Differential Equations}, 229(2):743--767, 2006.

\bibitem{MR2599456}
B.~Noris, H.~Tavares, S.~Terracini, and G.~Verzini.
\newblock Uniform {H}{\"o}lder bounds for nonlinear {S}chr{\"o}dinger systems
  with strong competition.
\newblock {\em Comm. Pure Appl. Math.}, 63(3):267--302, 2010.

\bibitem{MR2928850}
B.~Noris, H.~Tavares, S.~Terracini, and G.~Verzini.
\newblock Convergence of minimax structures and continuation of critical points
  for singularly perturbed systems.
\newblock {\em J. Eur. Math. Soc. (JEMS)}, 14(4):1245--1273, 2012.

\bibitem{ntvAnPDE}
B.~Noris, H.~Tavares, and G.~Verzini.
\newblock Existence and orbital stability of the ground states with prescribed
  mass for the {$L^2$}-critical and supercritical {NLS} on bounded domains.
\newblock {\em Anal. PDE}, 7(8):1807--1838, 2014.

\bibitem{ntvDCDS}
B.~Noris, H.~Tavares, and G.~Verzini.
\newblock Stable solitary waves with prescribed {$L^2$}-mass for the cubic
  {S}chr\"odinger system with trapping potentials.
\newblock {\em Discrete Contin. Dyn. Syst.}, 35(12):6085--6112, 2015.

\bibitem{MR3689156}
D.~Pierotti and G.~Verzini.
\newblock Normalized bound states for the nonlinear {S}chr\"odinger equation in
  bounded domains.
\newblock {\em Calc. Var. Partial Differential Equations}, 56(5):Art. 133, 27,
  2017.

\bibitem{RoseWeinstein88}
H.~A. Rose and M.~I. Weinstein.
\newblock On the bound states of the nonlinear {S}chr\"odinger equation with a
  linear potential.
\newblock {\em Phys. D}, 30(1-2):207--218, 1988.

\bibitem{Sirakov2007}
B.~Sirakov.
\newblock Least energy solitary waves for a system of nonlinear {S}chr\"odinger
  equations in {$\mathbb{R}^n$}.
\newblock {\em Comm. Math. Phys.}, 271(1):199--221, 2007.

\bibitem{STTZ}
N.~Soave, H.~Tavares, S.~Terracini, and A.~Zilio.
\newblock H{\"o}lder bounds and regularity of emerging free boundaries for
  strongly competing {S}chr{\"o}dinger equations with nontrivial grouping.
\newblock {\em Nonlinear Anal.}, 138:388--427, 2016.

\bibitem{SZ15}
N.~Soave and A.~Zilio.
\newblock Uniform bounds for strongly competing systems: the optimal
  {L}ipschitz case.
\newblock {\em Arch. Ration. Mech. Anal.}, 218(2):647--697, 2015.

\bibitem{Struwebook}
M.~Struwe.
\newblock {\em Variational methods}, volume~34 of {\em Ergebnisse der
  Mathematik und ihrer Grenzgebiete. 3. Folge. A Series of Modern Surveys in
  Mathematics [Results in Mathematics and Related Areas. 3rd Series. A Series
  of Modern Surveys in Mathematics]}.
\newblock Springer-Verlag, Berlin, fourth edition, 2008.
\newblock Applications to nonlinear partial differential equations and
  Hamiltonian systems.

\bibitem{PhysRevLett.81.5718}
E.~Timmermans.
\newblock Phase separation of bose-einstein condensates.
\newblock {\em Phys. Rev. Lett.}, 81:5718--5721, Dec 1998.

\bibitem{Weinstein1983}
M.~I. Weinstein.
\newblock Nonlinear {S}chr\"odinger equations and sharp interpolation
  estimates.
\newblock {\em Comm. Math. Phys.}, 87(4):567--576, 1982/83.

\end{thebibliography}
\end{document}